\documentclass{scrartcl}
\usepackage[utf8]{inputenc}
\usepackage[T1]{fontenc}

\usepackage{amsthm}
\usepackage{amsfonts}
\usepackage{stmaryrd}
\usepackage{MnSymbol}
\usepackage{xspace}
\usepackage{tikz-cd}
\usepackage[french=guillemets]{csquotes}
\MakeOuterQuote{"}
\usepackage[shortlabels]{enumitem}
\setlist{nosep}
\usepackage[style=alphabetic,backend=biber,sorting=nyt,giveninits=true]{biblatex}
\usepackage[final]{microtype}

\usepackage[colorlinks=true, breaklinks=true, urlcolor= black, linkcolor= black, citecolor= black, 
bookmarksopen=true,linktocpage=true,plainpages=false,pdfpagelabels]{hyperref}
\usepackage[capitalise]{cleveref}

\newtheorem{theorem}{Theorem}[section]
\newtheorem*{theorem*}{Theorem}
\newtheorem{claim}{Claim}[theorem]
\newtheorem{lemma}[theorem]{Lemma}
\newtheorem{corollary}[theorem]{Corollary}
\newtheorem{proposition}[theorem]{Proposition}

\theoremstyle{definition}
\newtheorem{remark}[theorem]{Remark}
\newtheorem{definition}[theorem]{Definition}
\newtheorem{notation}[theorem]{Notation}

\newcommand\Hen[1][]{\mathrm{Hen}_{#1}}

\newcommand\ACVF{\ensuremath{\mathrm{ACVF}}}

\newcommand\K{\mathrm{K}}
\renewcommand\L{\mathfrak{L}}
\newcommand{\eq}[1]{#1^{\mathrm{eq}}}
\newcommand{\acl}[1][]{\mathrm{acl}\ifstrempty{#1}{}{_{#1}}}
\newcommand{\dcl}[1][]{\mathrm{dcl}\ifstrempty{#1}{}{_{#1}}}
\newcommand{\acleq}[1][]{\acl[#1]}
\newcommand{\dcleq}[1][]{\dcl[#1]}
\newcommand\code[1]{\ulcorner#1\urcorner}
\newcommand\G{\mathcal{G}}
\newcommand\aut{\mathrm{Aut}}
\newcommand\TP{\mathrm{S}}
\newcommand\treeq{\trianglelefteqslant}
\renewcommand\O{\mathcal{O}}
\newcommand\tensor{\otimes}
\newcommand\germ[2]{[#2]_{#1}}
\newcommand\Vg{\Gamma}

\newcommand\alg[1]{#1^{\mathrm{a}}}
\newcommand\RV{\mathrm{RV}}
\newcommand\rv{\mathrm{rv}}
\newcommand\val{\v}
\renewcommand\v{\mathrm{v}}
\newcommand\Qq{\mathbb{Q}}
\newcommand\Zz{\mathbb{Z}}
\newcommand\cut{\mathrm{cut}}
\newcommand\A{\mathrm{A}}
\newcommand\B{\mathrm{B}}

\newcommand\GL{\mathrm{GL}}

\newcommand\Balls{\mathrm{B}}
\newcommand\gBalls{\mathrm{B_g}}

\newcommand{\red}{\operatorname{red}}
\newcommand{\Stab}{\operatorname{Stab}}
\newcommand{\tp}{\operatorname{tp}}
\renewcommand\epsilon{\varepsilon}
\newcommand\Lin{\mathrm{Lin}}
\newcommand\Tp{\TP}
\newcommand\I{\mathrm{I}}
\newcommand\Cc{\mathcal{C}}
\newcommand\Cut{\mathrm{Cut}}
\newcommand\pCut{\Cut^\star}
\newcommand\aCut{\Cut^{\star\star}}
\newcommand\Can{\Lambda}
\newcommand\Mod{\mathrm{Mod}}
\newcommand\isom{\simeq}
\newcommand\Ga{\mathbb{G}_\mathrm{a}}
\newcommand\Gm{\mathbb{G}_\mathrm{m}}
\newcommand\restr[2]{#1|_{#2}}
\newcommand\Aut{\mathrm{Aut}}
\newcommand\vect{\mathrm{vect}}
\newcommand\m{\mathfrak{m}}
\newcommand\MM{\mathbb{M}}

\newcommand\cT{\mathcal{T}}
\newcommand\ur[1]{#1^{\mathrm{ur}}}
\newcommand\res{\mathrm{res}}

\renewcommand\mid{:}

\newcommand\Uu{\mathrm{U}}
\newcommand\Dd{\mathrm{D}}

\newcommand\Lat{\mathrm{Gr}}
\newcommand\Tor{\mathrm{Lin}}
\newcommand\PP{\mathbb{P}}

\newcommand\dual[1]{#1^{\vee}}
\newcommand\Hom{\mathrm{Hom}}
\newcommand\Vv{\mathrm{V}}
\newcommand\lineq[1]{#1^{\mathrm{leq}}}

\newcommand\D{\textbf{\textup{D}}\xspace}

\renewcommand\k{\mathrm{k}}
\newcommand\NIP{\mathrm{NIP}}

\newcommand\Gal{\mathrm{Gal}}
\newcommand\Igp{I_\mathrm{gp}}
\newcommand\ac{\mathrm{ac}}

\setlength{\bibitemsep}{0pt}

\renewbibmacro{in:}{}
\bibliography{biblio}

\title{Imaginaries in equicharacteristic zero henselian fields}
\author{Silvain Rideau-Kikuchi\thanks{The first author was partially supported
by GeoMod AAPG2019 (ANR-DFG), Geometric and Combinatorial Configurations in
Model Theory} \and Mariana Vicar\'ia}

\begin{document}

\maketitle

\begin{abstract}
We classify the imaginaries in a large class of equicharacteristic zero
henselian valued fields that contain all those with bounded inertia group, and
more. To do so, we consider a mix of sorts introduced in earlier works of the two
authors and prove elimination of imaginaries down to the field, the k-linear
imaginaries and the imaginaries of the value group.
\end{abstract}

\noindent 2020 Mathematics Subject Classification. Primary: 03C45, 12J10, 12L12; Secondary: 03C10, 03C60.

\section{Introduction}

In the model theory of valued fields, one of the most striking results is a
theorem by Ax, Kochen and, independently, Ershov which states that the
first-order theory of an equicharacteristic zero henselian valued field is
completely determined by the first-order theories of its residue field \(\k\)
and of its value group \(\Vg\). A natural philosophy follows from this theorem: the
model theory of a henselian valued field is controlled by its residue field and
its value group.

In the early 2000s, Hrushovski asked if this philosophy also applies to the
elimination of imaginaries: the classification of interpretable sets (quotients
of definable sets by definable equivalence relations), or equivalently, the
description of (rough) moduli spaces for families of definable sets. He proposed
a classification of imaginaries reminiscent of the Ax-Kochen-Ershov principle.
The goal of the present paper is to establish this classification for a broad
class of henselian valued fields of equicharacteristic zero, including all those
with bounded Galois group, that is, having finitely many extensions of any given
degree. In fact, it suffices that the maximal unramified algebraic extension has
bounded Galois group, in other words, when the inertia group is bounded.

The study of imaginaries in various henselian valued fields has been ongoing for
the past 20 years, starting with the case of algebraically closed valued fields
(\(\ACVF\)) in the foundational work by Haskell, Hrushovski and Macpherson
\cite{HHM-EI}. They proved that in $\ACVF$, every quotient can be described as a
subset of products of certain specific quotients, known as the \emph{geometric
sorts} : the main field \(K\), and, for all $n \in \Zz_{>0}$, the space
$\Lat_{n}:= \GL_{n}(K)/\GL_{n}(\O)$ of free rank $n$ $\O$-submodules of $K^{n}$
(also known as the affine grassmanian of \(\GL_{n}\)) and the space $\Tor_{n}=
\coprod_{R \in \Lat_{n}} R/\m R$, where $\O$ denotes the valuation ring and
$\m\subseteq \O$ is the unique maximal ideal. We say that \(\ACVF\) eliminates
imaginaries down to the geometric sorts. These results were later extended to
other specific henselian fields, potentially with additional structure
\cite{Mel-RCVF,padics,separablyclosed,VDF}.

As can be expected, the residue field and the value group create natural
obstructions to the elimination of imaginaries in valued fields. In earlier
work, the authors studied the residual obstructions
\cite{HilRid-EIAKE} and the value group obstructions \cite{Vic-EIACk}
independently. The main focus of the present work, and where its novelty and
complexity lie, is to understand how they interact. This leads us to a
classification under no assumptions on the residue field and very mild
assumptions on the value group.

The classification of imaginaries in \(\ACVF\) laid the groundwork for a rich
"geometric model theory" of valued fields. However, the dependence on
algebraically closed fields hides most of the more arithmetic phenomena. It is
the authors' belief that the present classification of imaginaries and its
methods will lead to a geometric model theory of valued fields that accounts for
their arithmetic. This can be seen for example in their work \cite{CKRKVic} with
Cubides Kovacsics on residue domination.

\subsection{Obstructions arising from the value group}

Let \(K\) be an equicharacteristic zero henselian valued field. When describing
imaginaries in \(K\), the imaginaries of the value group itself have to be
accounted for. Moreover, the complexity of the value group also directly impacts
the complexity of definable \(\O\)-modules and this also needs to be taken into
account.

This can be done by introducing the \emph{stabilizer sorts} which provide codes
for all the definable $\O$-submodules of \(K^n\), for any \(n\). More precisely,
let \(\Cc = (\Cc_c)_{c\in\pCut}\) be the (ind-)interpretable family of definable
proper cuts in \(\Vg\) --- here a cut is said to be proper if it is neither of
the cuts at infinity. For every \(c\in\pCut\), let \(\I_c\) denote the
\(\O\)-submodule \(\{x\in K\mid v(x) \in \Cc_c\}\). For every tuple \(c\) in
\(\pCut\), let \(\Can_c\) be the module \(\sum_i I_{c_i}e_i\), where
\((e_i)_{i<|c|}\) is the canonical basis of \(K^{|c|}\).

The group \(\B_n\) of upper triangular matrices acts on the set of all definable
\(\O\)-submodules of \(K^n\). We define
\[\Mod_c = \B_n/\Stab(\Can_c)\] and we identify an element of \(\Mod_c\) with
the corresponding \(\O\)-submodule (see \cref{Mod}.2).

We also consider the (ind-)interpretable set
\[\Mod = \coprod_{c} \Mod_c\] where \(c\) varies over the (ind-)interpretable set of
finite tuples in \(\pCut\). Any definable $\O$-submodules of \(K^n\) is coded in
\(\Mod\cup K\) (see \cref{code mod}).

When the value group has bounded regular rank (\emph{i.e.} there are at most
countably many definable convex subgroups in any elementary
extension\footnote{We refer the reader to \cite{Vic-EIOAG} for details on
bounded regular rank groups.}) and the residue field is algebraically closed,
the second author \cite[Theorem~5.12]{Vic-EIACk} showed that, together with the
imaginaries of the value group, these are essentially the only new imaginaries.

\subsection{Obstructions arising from the residue field}

When the residue field is not algebraically closed, new obstructions arise. The
residue field itself can have non-trivial imaginaries, but it might also induce
linearly twisted imaginaries on interpretable \(\k\)-vector spaces.

Let \(R\subseteq K^n\) be a definable \(\O\)-submodule. Then the quotient $R/\m
R$ is a $\k$-vector space of dimension $d\leq n$, on which \(\k\) induces a
non-trivial structure. Once we name a basis, $R/\m R$ is definably isomorphic to
some $\k^{d}$, and hence imaginaries of $R/\m R$ can be identified with
imaginaries of \(\k\); but this identification is not canonical and depends on a
choice of basis.

The structure $(\k, R/\m R)$ can be seen as a structure in the language
$\L_{\vect}$ with two sorts:
\begin{itemize}
\item a sort for $\k$ with the structure induced by \(K\);
\item a vector space sort $\Vv$ with the (additive) group language;
\item A function $\lambda: \k \times \Vv \rightarrow \Vv$ interpreted as scalar
multiplication. 
\end{itemize}
Given a set $X$ interpretable without parameters in the $\L_{\vect}$-theory of
dimension $d$ vector spaces, the interpretable sets \(X^{(\k,R/\m R)}\) have to
be accounted for.\footnote{In \cite{HilRid-EIAKE}, it is claimed that it
suffices to consider interpretable sets of the form \(V/E\), this claim appears
to be incorrect.}

Let $\aCut=\pCut\setminus\{\Vg_{>\gamma}\mid \gamma\in\Gamma\}$ --- unless the
value group is discrete, in which case we set \(\aCut = \pCut\). A module
\(R\subseteq K^n\) is said to be \(\m\)-avoiding if it is in \(\Mod_c\), for
some tuple \(c\) in \(\aCut\). The dimension of \(R/\m R\) only depends on \(c\)
--- it is equal to \(d = |\{i \mid \Cc_{c_i} = \Vg_{\leq\gamma}\), for some
\(\gamma\in\Vg\}|\). For every quotient \(X\) as above, we define
\[\Tor_{c,X} =\coprod_{R \in \Mod_{c}} X^{(\k,R/\m R)}\]
and the \emph{(generalized) \(\k\)-linear imaginaries}:
\[\lineq{\k} = \coprod_{c,X} \Tor_{c,X},\] where \(c\) ranges over all finite
tuples in \(\aCut\). Among those, we denote \(\Lat = \coprod_{c} \Mod_c\) and
\(\Tor = \coprod_{c} \Tor_{c,\Vv}\). Along with \(K\), these form the
\emph{(generalized) geometric sorts}, and they encode all definable
\(\O\)-submodules of \(K^n\), for any \(n\) (see \cref{code mod G}).

Assuming that the value group is elementarily equivalent to \(\Qq\) or \(\Zz\),
Hils and the first author \cite[Theorem 6.1.1]{HilRid-EIAKE} show that, under
the technical assumption that the residue field eliminates \(\exists^\infty\),
all new imaginaries essentially arise in this manner.

\subsection{An imaginary Ax-Kochen-Ershov principle}

In this paper, we provide a common generalization of
\cite{HilRid-EIAKE,Vic-EIACk} by obtaining a general Ax-Kochen-Ershov principle
for the classification of imaginaries, under a mild technical assumption on the
value group (we refer the reader to \cref{imaginary} for notation and
definitions related to imaginaries). However, note that dealing with both an
arbitrary residue field and a very general value group introduces new issues
that were not present in either earlier works. We give details on some of the
new tools required to deal with these issues after stating the main theorems.

\begin{definition}
\label{def Delta type}    
Given a structure \(M\), tuples of variables \(x\) and \(y\) and a set of
formulas \(\Delta(x,y)\), a \(\Delta\)-type \(p(x)\) over \(M\) is a maximal
finitely consistent set of formulas of the form \(\phi(x,a)\) and
\(\neg\phi(x,a)\), where \(\phi\in\Delta\) and \(a\) is a \(y\)-tuple in \(M\).
If \(A\subseteq M\), we say that such a type is \(A\)-definable if for every
\(\phi\in \Delta\), the set \( \{a\in M^y \mid \phi(x,a)\in p\} \) is \(A\)-definable.
\end{definition}

\begin{definition}\label{propd} We say that an ordered group \(G\) (potentially
with additional structure) satisfies Property \D if for every \(A =
\acleq(A)\subseteq \eq{G}\) and every finite set of \(A\)-formulas $\Delta(x,y)$
containing the formula $x< y_{0}$, any $A$-definable \(\Delta\)-type $p(x)$ over
\(G\) is contained in an $A$-definable complete type \(q(x)\) over \(G\).
\end{definition}

This is a stronger property than the density of definable types. It holds in
ordered abelian groups of bounded regular rank with no addition structure (see
the second half of the proof of \cite[Theorem~5.3]{Vic-EIACk}).

Our main results are the following. Let \(K\) be an equicharacteristic zero
henselian valued field such that the value group is \emph{either}:
\begin{itemize}
\item dense with property \D;
\item or, a discrete ordered abelian group of bounded regular rank --- in which
case, we add a constant for a uniformizer.
\end{itemize}

\begin{theorem*}[\cref{ac}]
Let \(K_\ac\) be an expansion of \(K\) by angular components. Then $K_\ac$
weakly eliminates imaginaries down to $K \cup \lineq{\k} \cup \eq{\Vg}$ --- in
other words, any interpretable set admits an interpretable finite cover by a
subset of some cartesian power of $K \cup \lineq{\k} \cup \eq{\Vg}$ (see
\cref{imaginary} for precise definitions).
\end{theorem*}

Without an angular component the short exact sequence
\[1 \to \k^\star \to \RV = K^\star/(1+\m) \to \Vg \to 0\] might not eliminate
imaginaries down to \(\lineq{\k}\) and \(\eq{\Gamma}\), creating further
obstructions (see \cref{general RV im}). They can be avoided by further
assumption on \(\Gamma\):

\begin{theorem*}[\cref{AKE}]
Assume that for every $n \in \Zz_{\geq 1}$ we have $[\Vg: n\Vg]< \infty$, and we add
constants in \(\RV\) so that \(\Vg/n\Vg = \Vg/n v(\RV(\acl(\emptyset)))\).
Then $K$ weakly eliminates imaginaries down to $K \cup \lineq{\k}\cup \eq{\Vg}$. 
\end{theorem*}

\begin{remark}
These theorems remain true when \(K\) comes with additional structure on \(\k\)
and, independently \(\Gamma\) --- the main exception is that when the valuation
is discrete, we require the valuation group to have no additional structure.
Also, in the second theorem, the assumption on the finiteness of
\(\Gamma/n\Gamma\) can be replaced by asking that \(\k\) is an algebraically
closed field with no additional structure (see \cref{wei res ACF}, this is
essentially \cite[Theorem 5.12]{Vic-EIACk}).
\end{remark}

These results generalize all previously known weak elimination results in
equicharacteristic zero (in particular, those of \cite{HilRid-EIAKE,Vic-EIACk})
and provides a definitive answer for, among others, all equicharacteristic zero
henselian valued fields with bounded inertia group  --- meaning that the maximal
unramified extension has bounded Galois group.

As a corollary (and an illustration) of these two results, when \(\Vg\) is
dense, we give a complete classification of (almost) \(\k\)-internal sets ---
see \cref{kint}. In the case of \(\ACVF\) this classification is a cornerstone
of the study of stable domination and the subsequent work of Hrushovski and
Loeser \cite{HruLoe} on Berkovich spaces. These new results are used in
\cite{CKRKV-resdom} to study residual domination in equicharacteristic zero
henselian fields.

\subsection{Sketch of the proof}

The proof of the main theorems proceeds in two main steps. The first one is a
density result for definable types in the structure induced by the maximal
unramified algebraic extension (\cref{density}). Density of definable (complete)
types is at the core of many recent results on elimination of imaginaries.
However, it cannot hold in arbitrary equicharacteristic zero henselian field
\(K\) because it might not hold in the residue field or the value group.

In \cite[Theorem~3.1.3]{HilRid-EIAKE} Hils and the first author showed that
definable types of the algebraic closure $\alg{K}$ are dense among definable
sets in $K$, under the assumption that the residue field eliminates
$\exists^{\infty}$. The second author (\cite[Theorem~5.9]{Vic-EIACk}) proved
density of definable types in the maximal unramified algebraic extension
\(\ur{K}\), assuming the value group has bounded regular rank. \cref{density}
generalizes both results and unifies them by proving that the definable types of
\(\ur{K}\) are dense among the sets definable in \(K\) (assuming Property \D).
Note also that no hypothesis on the residue field is required anymore. A
significant new challenge in this construction is to relate the germs of
functions definable in \(K\) to those of functions definable in \(\ur{K}\) ---
see \cref{germs dens}.

The second step of proof consists in studying the completions of the partial
definable types constructed in the first step. We show that they are invariant
over \(\RV\) and \(\k\)-vector spaces of the form \(R/ \m R\), where \(R\) is a
definable \(\O\)-submodules of \(K^n\) (\cref{invariantcompletions}). The bulk
of the work (\cref{inv res}) revolves around showing that (generalized)
geometric points can be lifted to the valued field by a sufficiently invariant
type. This, in turn, relies heavily on the technical computation of germs of
function taking values in sets of the form \(R/ \m R\).

We describe such germs in three steps. First we first consider the case of
valued fields with algebraically closed residue field (\emph{cf.} \cref{germs
intk}). We then consider valued fields with dense value group and arbitrary
residue field (\cref{subsec: dense}). This relies on the characterization of the
$\k$-internal sets (\cref{almost k int}). Lastly, we consider valued fields with
discrete value group and arbitrary residue field (\cref{subsec: discrete}). In
that case, we circumvent the characterization of the $\k$-internal sets by
considering a ramified extension with dense value group.

Together these two results imply a first elimination result relative to the
imaginaries of \(\RV\) and the \(\k\)-vector spaces of the form \(R/ \m R\)
(\cref{weakcode}). The main theorems follow by describing the imaginaries in
\(\RV\) with and without angular component.

\subsection{Overview of the paper}

\cref{sec: prel} provides some preliminary reminders on imaginaries and the
model theory of equicharacteristic zero henselian fields.

In \cref{sec:code mod}, we introduce the stabilizer sorts and the generalized
geometric sorts. Both provide codes for the definable $\mathcal{O}$-modules. We
prove a unary decomposition for the stabilizer sorts (\cref{decomp solv}). The
generalized geometric sorts --- and the related notion of \(\m\)-avoiding module
--- play a crucial role in classifying \(\k\)-internal sets among the stabilizer
sorts (\cref{almost k int}), provided that the value group is dense.

In \cref{densitysec}, we prove the density of definable types of the maximal
unramified algebraic extension \(\ur{K}\) among definable sets of \(K\)
(\cref{density}). In \cref{invext}, we show that those definable types have
invariant completions (\cref{invariantcompletions}). Finally, in
\cref{conclusion}, we wrap everything together and show the two main theorems. 

\subsection*{Acknowledgments}

The authors are ever grateful to M. Hils whose decade long collaboration with
the first author was foundational to the present paper. They would also like to
thank E. Hrushovski, T. Scanlon and P. Simon for many enlightening discussions
on this topic. Finally, they would like to thank anonymous referees for their
many insightful comments on earlier versions of this paper.

\section{Preliminaries}\label{sec: prel}

\subsection{Model theoretic preliminaries}
\label{imaginary}

Throughout this text, if \(M\) is an \(\L\)-structure, a set \(X\) is said to be
"definable" if it is definable with parameters. If we specify that it is
"\(\L\)-definable" we mean that it is definable without parameters. Also, we
extend definable sets canonically to elementary extensions of \(M\) and we
distinguish the definable set \(X\) from the set \(X(M)\) of its realizations in
\(M\).

If \(A \subseteq M\), we write \(X(A)\) for the points of \(X\) whose
coordinates are in \(A\), rather than \(X\cap\dcl(A)\). We change language too
often to not be explicit with the definable closures at play.

We refer the reader to \cite[Section~8.4]{TenZie} for a detailed exposition of
the elimination of imaginaries. Let \(T\) be an \(\L\)-theory. Consider the
language \(\eq{\L}\) obtained by adding to \(\L\) a new sort \(S_X\) for every
\(\L\)-definable set \(X \subseteq Y\times Z\), where \(Y\) and \(Z\) are
product of sorts, and a new symbol \(f_X : Z \to S_X\). The \(\eq{\L}\)-theory
\(\eq{T}\) is then obtained as the union of \(T\), the fact that the \(f_X\) are
surjective and that their fibers are the classes of the equivalence relation
defined by \(X_{z_1} = \{y\in Y: (y,z_1)\in X\} = X_{z_2}\).

Any \(M\models T\) has a unique expansion to a model of \(\eq{T}\) denoted
\(\eq{M}\) --- whose points are called the \emph{imaginaries}. A set is
interpretable if it is definable in \(\eq{M}\). Throughout this paper, when
considering types, definable closures or algebraic closures, we will work in the
\(\eq{\L}\)-structure, unless otherwise specified.

Given \(M\models T\) and an \(\L(M)\)-definable set \(X\), we denote by
\(\code{X} \subseteq \eq{M}\) the intersection of all \(A = \dcleq(A)\subseteq
\eq{M}\) such that \(X\) is \(\eq{\L}(A)\)-definable. It is the smallest
\(\dcleq\)-closed set of definition for \(X\). Any \(\dcleq\)-generating subset
of \(\code{X}\) is called \emph{a code} of \(X\). More generally, if
\(A\subseteq\eq{M}\) is a set of parameters, any tuple \(e\) such that
\(\dcl(Ae) = \dcl(A\code{X})\) is called a code of \(X\) over \(A\).

If \(\mathcal{D}\) is a collection of sorts of \(\eq{\L}\) (equivalently, a
collection of \(\L\)-interpretable sets) and \(A\subseteq \eq{M}\) is a set of
parameters, we say that \(X\) is \emph{coded} in \(\mathcal{D}\) over \(A\) if
it is \(\eq{\L}(A\cup\mathcal{D}(\code{X}))\)-definable --- \emph{i.e.}, it
admits a code in \(\mathcal{D}\) over \(A\).

The theory \(T\) is said to \emph{eliminate imaginaries} down to \(\mathcal{D}\)
if, for every \(M\models T\), every \(\L(M)\)-definable set \(X\) is coded in
\(\mathcal{D}\) --- equivalently, for every \(e\in\eq{M}\), there is some \(d\in
\mathcal{D}(\dcleq(e))\) such that \(e\in\dcleq(d)\). Finally, we say that the
theory \(T\) \emph{weakly eliminates imaginaries} down to \(\mathcal{D}\) if for
every \(e\in\eq{M}\), there is some \(d\in \mathcal{D}(\acleq(e))\) such that
\(e\in\dcleq(d)\).

If \(p(x)\) is a (definable) partial type over some structure \(M\) and \(f\)
and \(g\) are definable functions in \(M\) which are defined at realizations of
\(p\), we say that they have the same \(p\)-germ, and we write \(\germ{p}{f} =
\germ{p}{g}\) if \(p(x)\vdash f(x) = g(x)\). When \(p\) is definable we write
\(\germ{p}{f}\) for class of \(p\)-germs as \(f\) varies in an
\(\L\)-definable family.

Lastly, we refer the reader to \cite[Appendix $\A$ (Definition $\A.2$ and
$\A.5$)]{Rideau} for a detailed presentation of expansions (called enrichments,
there) and relative quantifier elimination. Note that in the present text,
expansions do not allow adding new sorts.

\subsection{Equicharacteristic zero henselian fields}\label{prel}

Let \(\Hen[0,0]\) be the theory of equicharacteristic zero henselian valued
fields \((\K,\v)\) with no additional structure, in some language \(\L\). The
exact language we use does not matter much since we will be working \(\eq{\L}\).
In this section, we recall some useful results about these structures. We denote
by \(\RV^\star\) the group \(\K^\star/(1+\m)\), where \(\m\) is the maximal
ideal of the valuation ring \(\O\subseteq\K\) and \(\rv : \K\to\RV =
\RV^\star\cup\{0\}\) the canonical projection (extended by \(\rv(0) = 0\)).

\begin{theorem}[{\cite[Theorem~B]{Bas}}]
\label{EQ RV}
Let \(M\models\Hen[0,0]\) and \(A\leq\K(M)\) be a subring. Every \(A\)-definable
subset of \(\K^x\times\RV^y\) is of the form \(\{(x,y) \mid (\rv(P(x)),y)\in
X\}\), for some tuple \(P\) in \(A[x]\) and some \(X\subseteq \RV^n\) which is
\(\rv(A)\)-definable in the short exact sequence
\[1\to\k^\star\to\RV^\star\to \Vg \to 0.\] where \(\k = \O/\m\) is the residue
field and \(\Vg =\v(\K)\) is the value group.
\end{theorem}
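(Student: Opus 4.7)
The plan is to derive this result from a stronger \emph{relative quantifier elimination} statement: every \(\L\)-formula \(\varphi(x,\eta)\) with field variables \(x\) and \(\RV\)-variables \(\eta\) is equivalent modulo \(\Hen[0,0]\) to a formula of the form \(\psi(\rv(P_1(x)),\ldots,\rv(P_n(x)),\eta)\), where the \(P_i\in\Zz[x]\) are polynomials and \(\psi\) involves only \(\RV\)-variables. Once this is established, specialising to \(\L(A)\)-formulas and absorbing the coefficients into the polynomials yields the stated form for \(A\)-definable subsets of \(\K^x\times\RV^y\), with \(X\) an \(\rv(A)\)-definable subset of \(\RV^n\) in the short exact sequence.

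The core of the argument is an embedding lemma proved by back-and-forth. Let \(M,N\models\Hen[0,0]\) be sufficiently saturated and suppose \(f:A\to N\) is a partial isomorphism from a valued subfield \(A\leq\K(M)\) which lifts to an \(\RV\)-elementary map between the \(\RV\)-parts. For each \(a\in\K(M)\setminus A\) I would extend \(f\) to \(A(a)\) while preserving the tuple \((\rv(Q(a)))_{Q\in A[x]}\). Three cases arise. If \(a\) is transcendental and generates an immediate extension, I pick a pseudo-Cauchy sequence in \(A\) of which \(a\) is a pseudo-limit and use saturation to realise a pseudo-limit of the image sequence in \(N\). If \(a\) is transcendental and generates a non-immediate extension, the isomorphism type of \(A(a)\) over \(A\) is determined by the values \(\rv(Q(a))\) for \(Q\in A[x]\), and saturation of \(N\) produces a matching element. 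If \(a\) is algebraic over \(A\), I first pass to henselisations (which changes neither \(\RV\) nor the hypotheses) and then apply Hensel's lemma to the image of the minimal polynomial of \(a\) to exhibit a conjugate \(a'\in\K(N)\).

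With the embedding lemma in hand, the quantifier elimination is a standard compactness argument: any two tuples \((a,b)\) in a monster model having the same \(\rv\)-polynomial type of \(a\) over \(A\) and the same \(\RV\)-type of \(b\) over \(\rv(A)\) are conjugate over \(A\), which is exactly what is needed. Resplendence in \(\RV\)-expansions is automatic from this presentation, because the embedding lemma uses nothing about the \(\RV\)-structure beyond the assumption that \(f\) is \(\RV\)-elementary; any additional structure imposed on \(\RV\) is transported across the back-and-forth for free.

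The main obstacle I expect is the algebraic case of the embedding lemma: one needs to show that preservation of \(\rv\circ Q\) for all \(Q\in A[x]\) suffices to pin down the image of an algebraic element uniquely, and to set up Hensel's lemma correctly. The crucial leverage is that in residue characteristic zero, Hensel's lemma applies whenever \(\v(P(a))>2\,\v(P'(a))\), a condition visible in the \(\rv\)-polynomial data, so the existence of a simple root near the right approximation in \(N\) is guaranteed. The immediate transcendental case is the other delicate point: one must verify that the \(\rv\)-polynomial type of a pseudo-limit is determined by the pseudo-Cauchy sequence, which again relies on residue characteristic zero to rule out the pathological behaviour of pseudo-convergence in mixed characteristic.
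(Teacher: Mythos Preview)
The paper does not prove this theorem; it is stated as a citation of \cite[Theorem~B]{Bas} and used as a black box throughout. So there is no proof in the paper to compare your proposal against.

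That said, your outline is the standard route to results of this type (relative quantifier elimination for henselian fields down to \(\RV\)), and it is essentially what one finds in Basarab's work and in later treatments such as Flenner's. The back-and-forth with the threefold case split (immediate transcendental via pseudo-Cauchy sequences, value- or residue-extending transcendental, algebraic via Hensel) is correct in spirit. A few points you would need to tighten: in the immediate case you must check that a pseudo-limit of a sequence with no algebraic pseudo-limit over \(A\) has its \(\rv\)-polynomial type completely determined by the sequence (Kaplansky's theory); in the algebraic case it is cleaner to first extend to the henselisation on both sides, after which the algebraic extension step becomes trivial rather than requiring an explicit Hensel approximation argument. Your remark about resplendence in \(\RV\)-expansions is exactly right: since the back-and-forth only uses \(\RV\)-elementarity of the restriction, any enrichment of \(\RV\) passes through untouched.
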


This remains true in \(\RV\)-expansions --- \emph{i.e.} when \(\RV\) comes with
additional structure.

From the result above, either by adding a section or proving a quantifier
elimination result for short exact sequences, we can deduce the following:

\begin{proposition}
\label{Vg k structure}
Let \(M\models\Hen[0,0]\) and \(A\leq\K(M)\) be a subring. The sets \(\k\) and
\(\Vg\) are stably embedded (with respectively the structure of a field and an
ordered group) and they are orthogonal. In other words, any \(M\)-definable
subset of \(\k^x\times\Vg^y\) is a finite union of products \(X\times Y\) where
\(X\) is definable in the field \(\k\) and \(Y\) is definable in the ordered
group \(\Vg\).

Moreover, any \(A\)-definable \(X\subseteq \Vg^n\) is
\(\v(A)\)-definable --- we say that \(\Vg\) is \emph{strongly stably embedded}.
\end{proposition}

The first part is \cite[corollary 2.25]{vdD-notes}. The second part follows
\cite[corollary 2.24]{vdD-notes}, noting that adding a angular component does
not grow the generated structure in \(\Vg\).

These results remain true in \(\k\)-\(\Vg\)-expansions --- \emph{i.e.} when
\(\k\) comes with additional structure, and, independently so does \(\Vg\). They
also hold if an angular component is added to the language.

\begin{lemma}
\label{acl Vg}
Let \(M\) be an expansion of a model of \(\Hen[0,0]\) with strongly stably
embedded value group \(\Vg\). For any subring \(A\leq \K(M)\), we have
\[\eq{\Vg}(\acl(A)) \subseteq \acl(\v(A)).\]
\end{lemma}

\begin{proof}
Let \(X\subseteq \Vg^n\) be \(\acl(A)\)-definable. Let \(X_1 = X,\ldots,X_n\) be
the \(\aut(M/A)\)-conjugates of \(X\) over \(A\). For every \(x,y\in \Vg^n\), we define
\(x E y\) to hold if, for every \(i\), \(x\in X_i\) if and only if \(y\in X_i\).
This is an \(A\)-definable equivalence relation. By strong stable embeddedness,
it is \(\v(A)\)-definable. Note also that \(E\) has finitely many equivalence
classes and \(X\) is a union of classes. It follows that \(X\) is
\(\acl(\v(A))\)-definable.
\end{proof}

\begin{definition}
\label{def gen ball}
Let \(M\models\Hen[0,0]\). Let \(a\in\K(M)\) and let  \(C\) be a cut in
\(\Vg(M)\) --- that is, an upwards closed subset. We define the generalized ball
\(b_C(a)\) of cut \(C\) around \(a\) to be \(\{x\in\K\mid \v(x-a) \in C\}\). A
generalized ball is said to be open if its cut is not of the form \(\Vg_{\geq
\gamma}\), for each \(\gamma\in\Vg(M)\).

Let \(\gBalls\) denote the (ind-)interpretable set of (codes for) definable
generalized balls.
\end{definition}

Note that, for every \(\gamma\in\Vg(M)\),  \(b_{\Vg_{>\gamma}}(a)\) is the open
ball of radius \(\gamma\) around \(a\), \(b_{\Vg_{\geq\gamma}}(a)\) is the
closed ball of radius \(\gamma\) around \(a\) --- and \(b_{\Vg}(a) = \K\) is
also considered an open ball.

If \(\Gamma(M)\) is discrete, \(b_{\Vg_{>\gamma}}(a) = b_{\Vg_{\geq\gamma}}(a)\)
is not considered to be an open generalized ball. Let \(M\) be an
\(\RV\)-expansion of a model of \(\Hen[0,0]\) and let \(A\leq \K(M)\) be a
subring. Let \(\alg{M}\) denote the algebraic closure as a pure valued field.

\Cref{EQ RV} can also be refined for unary sets:

\begin{proposition}[{\cite[Proposition~3.6]{Fle}}]
\label{Hen prep}
Let \(M\models\Hen[0,0]\), let \(A\leq\K(M)\) be a subring and let \(X\subseteq \K
\times \RV^n\) be \(A\)-definable. There exists a finite set \(C\subseteq\alg{A}
\cap \K(M)\) such that for every \(\xi\in\RV^n\), \(X_\xi = \{x\in\K\mid
(x,\xi)\in X\} = \rv_C^{-1}(\rv_C(X_\xi))\) where \(\rv_C(x) = (\rv(x-c))_{c\in
C}\).
\end{proposition}

It follows that for any generalized ball \(b\) that does not intersect \(C\),
either \(b\cap X_\xi = b\) or \(b\cap X_\xi = \emptyset\). We say that \(C\)
prepares \(X\).

This proposition remains true in \(\RV\)-expansions. Also, it follows that
\(\K(\acl(A)) \subseteq \alg{A}\).

\begin{lemma}
\label{points balls}
Let \(M\) be (an \(\RV\)-expansion of) a model of \(\Hen[0,0]\), let
\(A\leq\K(M)\) and let \(b\) be an \(\acl(A)\)-definable generalized ball which
is not an open ball. Then there exists \(c\in \alg{A}\cap b(M)\) whose other
\(\aut(M/A)\)-conjugates are all outside of \(b\).

Moreover, if \(b\) is a ball, we may assume that no other
\(\aut(\alg{M}/A)\)-conjugate is in \(b(\alg{M})\).
\end{lemma}

\begin{proof}
Note that any other \(\aut(M/A)\)-conjugate of \(b\) is disjoint from \(b\). Let
\(B\) be the union of \(\aut(M/A)\)-conjugates of \(b\). It is an
\(A\)-definable set. By \cref{Hen prep}, there exists \(C\subseteq \alg{A}\cap
\K(M)\) such that for any ball \(d\) disjoint from \(C\), either \(d\subseteq
B\) or \(d\cap B = \emptyset\). If \(b\cap C = \emptyset\), let \(d\) be the
largest ball containing \(b\) which is disjoint from \(C\) --- \emph{i.e.} the
open ball around \(b\) with radius \(\min_{c\in C}\v(x-c)\), for any \(x\in b\).
Since \(d\) contains \(b\) and is disjoint from \(C\), it is contained in \(B\).
So \(d\) is covered by finitely many disjoint subballs. As the residue field is
infinite, this is impossible unless \(d = b\), in which case \(b\) would be
open, contradicting our assumption.

So \(b\cap C \neq \emptyset\). Let \(C_b = (\aut(M/A)\cdot C)\cap b\). Since
we are in equicharacteristic zero, the average \(c\) of \(C_b\) is in \(b(M)\).
By construction, no other \(\aut(M/A)\)-conjugate of \(c\) is in \(b\).

If \(b\) is a ball, then it is definable in \(\alg{M}\) and its
\(\aut(\alg{M}/A)\)-orbit is also a finite set of disjoint balls. Let  \(C'_b =
\aut(\alg{M}/A)\cdot c\). Since \(M\) is henselian, the average \(c'\) of
\(C'_b\) is in \(M\). By construction again, the only \(\aut(\alg{M}/A)\)-conjugate of
\(c'\) in \(b\) is \(c'\).
\end{proof}

Finally, when the residue field is algebraically closed, \cref{EQ RV} can be further simplified:

\begin{theorem}[{\cite[Corollary~2.33]{Vic-EIACk}}]
\label{EQ Vg}
Assume the residue field \(\k(M)\) is algebraically closed. Every
\(A\)-definable subset of \(\K^x\) is of the form \(\v(P(x))\in X\) where \(P\)
is a tuple in \(A[x]\) and \(X\subseteq \Vg^n\) is
\(\v(A)\)-definable in the ordered group structure. Moreover, this remains true
in \(\Vg\)-expansions --- \emph{i.e.} when \(\Vg\) comes with additional
structure.
\end{theorem}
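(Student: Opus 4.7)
The plan is to derive the theorem from Basarab's theorem (\cref{EQ RV}) by using the algebraic closedness of $\k$ to translate all $\RV$-conditions into valuation conditions on polynomials over $A$. By \cref{EQ RV}, any $A$-definable subset $D \subseteq \K^x$ can be written as $\{x : \rv(P(x)) \in X'\}$ for some polynomial tuple $P = (P_1, \ldots, P_n) \in A[x]$ and some $\rv(A)$-definable $X' \subseteq \RV^n$ in the short exact sequence $1 \to \k^\star \to \RV^\star \to \Vg \to 0$. The task is then to rewrite this $\RV$-condition as a pure valuation condition on polynomials in $A[x]$.

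I would then partition $\K^x$ into finitely many $\v$-definable cells: for each total preorder $\preceq$ on $\{1, \ldots, n\}$, set $C_\preceq = \{x : \v(P_i(x)) \preceq \v(P_j(x)) \iff i \preceq j\}$. On $C_\preceq$, group the indices into equal-valuation classes $J_1, \ldots, J_r$ and pick a representative $i_t \in J_t$ for each. For $i \in J_t$, $P_i(x)/P_{i_t}(x) \in \O^\star$, so its residue $\res(P_i(x)/P_{i_t}(x)) \in \k^\star$ is canonically defined, without recourse to any section. Multiplicativity of $\rv$ gives $\rv(P_i(x)) = \rv(P_{i_t}(x)) \cdot \res(P_i(x)/P_{i_t}(x))$, so the whole tuple $(\rv(P_i(x)))_i$ is recovered from the anchors $(\rv(P_{i_t}(x)))_t$ and the residue tuple $(\res(P_i/P_{i_t}))_{i,t}$.

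The next step is to translate the condition $\rv(P(x)) \in X'$ on each cell. Combining the above factorization with orthogonality of $\k$ and $\Vg$ (\cref{acl Vg}), this condition decomposes as a finite disjunction of conjunctions of (i) a $\Vg$-condition on the valuations of a tuple of polynomials in $A[x]$ --- covering the $\Vg$-content of $X'$ and the multiplicative/anchor relations between the $\rv(P_i)$, which reduce to valuations of polynomial products --- and (ii) a constructible $\k$-condition on the residue tuple $(\res(P_i/P_{i_t}))$ over a suitable parameter set in $\k$ derived canonically from $A$. By quantifier elimination in ACF, each atomic condition of type (ii) is an equation $g(\res(P_i/P_{i_t})) = 0$ (or its negation) for some polynomial $g$ whose coefficients lift to $A$; lifting to $\tilde g \in A[y]$ and clearing denominators by a suitable power $P_{i_t}^N$, this is equivalent to $\v(h(x)) > N \v(P_{i_t}(x))$ for some $h \in A[x]$, a pure polynomial valuation condition.

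Collecting everything, $D$ is defined by $\v(Q(x)) \in X$ for an enlarged polynomial tuple $Q \in A[x]$ and a $\v(A)$-definable $X \subseteq \Vg^N$ in the ordered-group language, as required. The main technical obstacle is the residue-analysis step, where one must carefully unwind the multiplicative $\RV$-structure over each cell to isolate the canonical residues, then translate constructible residue conditions back into polynomial valuation conditions via lifting and denominator clearing. The moreover for $\Vg$-expansions is automatic: \cref{EQ RV} is resplendent in $\RV$ (hence a fortiori in $\Vg$), and neither the cell decomposition nor the residue-reduction step interacts with enrichments of $\Vg$.
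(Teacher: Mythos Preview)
The paper does not include a proof of this statement; it is quoted as \cite[Corollary~2.33]{Vic-EIACk} and used as a black box. Your derivation from \cref{EQ RV} via cell decomposition and residue-lifting is a reasonable and essentially correct route to the result.

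Two points where the sketch should be tightened. First, the parameters for $X'$ lie in $\rv(A)$, not a priori in $\k$, so the phrase ``a suitable parameter set in $\k$ derived canonically from $A$'' is doing unexplained work. The clean fix is to absorb each parameter $\rv(a)$ into the tuple $P$ by adjoining the constant polynomial $a$, so that $X'$ may be taken $\emptyset$-definable in the short exact sequence; the residue conditions you extract then have integer coefficients, which trivially lift to $A$. Second, the preorder cells by themselves are not quite fine enough: a multiplicative condition among anchors such as $\rv(P_{i_1})\rv(P_{i_2}) = \rv(P_{i_3})$ is not a $\Vg$-condition until you have also case-split on whether $\v(P_{i_1}) + \v(P_{i_2}) = \v(P_{i_3})$ holds, after which it becomes the $\k$-condition $\res(P_{i_1}P_{i_2}/P_{i_3}) = 1$. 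You gesture at this (``multiplicative/anchor relations \ldots reduce to valuations of polynomial products''), and since only finitely many such $\Zz$-linear combinations occur in the formula defining $X'$, this further refinement is finite and harmless. With these two clarifications your argument goes through.
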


\section{Codes of \texorpdfstring{\(\O\)}{O}-modules}\label{sec:code mod}

\subsection{The stabilizer sorts}\label{stabsorts}
Let \(M\) be an (expansion of a) valued field in some language \(\L\).

\begin{notation}
We fix an (ind-)\(\L\)-interpretable family \(\Cc = (\Cc_c)_{c\in\Cut}\) of
cuts in \(\Vg\) such that any \(M\)-definable cut is of the of form \(\Cc_c\)
for some unique \(c \in \Cut(M)\). We will further assume that \(c\) is a
canonical parameter for \(\Cc_c\).
\end{notation}

For every \(c\in\Cut\), let \(\I_c\) denote the \(\O\)-submodule \(\{x\in \K\mid
\val(x) \in \Cc_c\}\). Note that, by definition of \(\Cc\), any
\(\L(M)\)-definable \(\O\)-submodule of \(\K\) is of the form \(\I_c\) for some
unique \(c\in \Cut(M)\). We also denote \(\Delta_c = \{\gamma\in\Vg \mid \gamma
+ \Cc_{c} = \Cc_{c}\}\) --- it is a convex subgroup of \(\Vg\).

The following results are well-established and go back to Bauer's work on
separated extensions. 

\begin{definition}
A definable valuation \(v\) on an interpretable \(\K\)-vector space \(V\) is a
map to some interpretable set \(X\) with an order preserving action of \(\Vg\)
such that
\begin{itemize}
\item for every \(\lambda\in \K\) and \(x\in V\), \(v(\lambda x) = \v(\lambda) +
v(x)\);
\item for every \(x,y\in V\), \(v(x+y) \geq \min \{v(x),v(y)\}\).
\end{itemize}
\end{definition}

Note that we do not assume that \(v(x) = v(0)\) implies \(x = 0\), nor do we
assume that the action of \(\Gamma\) on \(X\) is free.

\begin{proposition}
\label{uppertrian}
Assume that \(M\) is definably spherically complete --- that is, the
intersection of any \(M\)-definable chain of balls is non empty.
\begin{enumerate}
\item For every \(M\)-definable valuation \(v\) on \(\K^n\), there
exists a triangular basis \((a_i)_{i<n}\) of \(\K^n\) such that, for all \(i\),
\(v(a_i)\in \dcleq(\code{v})\) and for every \(\lambda_i\in\K\),
\[v(\sum_i \lambda_i a_i) = \min_i \v(\lambda_i)\cdot v(a_i).\]

\item Any \(M\)-definable \(\O\)-submodule \(R\) of \(\K^n\) is of the form
\(\sum_{i< n} \I_{c_i} a_i\), where \(a_i\) is a triangular basis of \(\K^n(M)\)
and \(c_i\in\Cut(\code{R})\).
\end{enumerate}
\end{proposition}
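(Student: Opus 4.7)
I would prove (1) by induction on $n$ and then deduce (2) by observing that any definable $\O$-submodule of $\K^n$ induces a canonical definable valuation.

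For (1), the case $n = 1$ is immediate: any nonzero $a_1 \in \K(M)$ works. For the inductive step, I would follow the classical Bauer construction of a separated basis: isolate one extremal direction $\K a_1$ and reduce to a definable quotient valuation on $\K^n / \K a_1 \simeq \K^{n-1}$. The central tool is definable spherical completeness: for any $x \in \K^n(M)$ and any nonzero $y \in \K^n(M)$, the function $\lambda \mapsto v(x - \lambda y)$ attains its maximum on $\K(M)$, because its super-level sets $\{\lambda : v(x - \lambda y) \geq t\}$ form a definable chain of balls in $\K$ whose intersection is nonempty. I would then choose $a_1 \in \K^n(M)$ lying in an extremal $\Vg$-orbit of $v(\K^n)$ --- this orbit is canonically determined by $v$, and definable spherical completeness allows one to select a representative with $v(a_1) \in \dcleq(\code{v})$. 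The quotient valuation $\bar v$ on $\K^n / \K a_1$ given by $\bar v(\bar x) = \max_{\lambda \in \K} v(x - \lambda a_1)$ is then a definable valuation with $\code{\bar v} \in \dcleq(\code{v})$. Applying the inductive hypothesis yields a triangular basis $(\bar a_2, \ldots, \bar a_n)$ for $\bar v$; lifting each $\bar a_i$ to a best-approximation $a_i \in \K^n(M)$ with $v(a_i) = \bar v(\bar a_i)$, the tuple $(a_1, \ldots, a_n)$ is a triangular basis for $v$, since the triangular formula follows from the inductive triangularity of $\bar v$ together with the extremality of the lifts.

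For (2), given a definable $\O$-submodule $R \subseteq \K^n$, I would define $v_R : \K^n \setminus \{0\} \to \Cut$ by letting $v_R(x)$ be the unique cut $c$ with $\{\lambda \in \K : \lambda x \in R\} = \I_c$. This is a definable valuation (with $X = \Cut$ under the shift action of $\Vg$) and $\code{v_R} \in \dcleq(\code{R})$. Applying (1) yields a triangular basis $(a_i)$ with $c_i := v_R(a_i) \in \dcleq(\code{R})$; the triangular formula then immediately gives $R = \sum_{i < n} \I_{c_i} a_i$, since $x = \sum \lambda_i a_i$ lies in $R$ iff $v_R(x)$ lies in the non-negative cut, iff each $\v(\lambda_i) + c_i$ does, iff each $\lambda_i \in \I_{c_i}$.

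The main technical obstacle is ensuring canonicity of the choice of $a_1$ in the inductive step: the $\Vg$-orbit of $v(a_1)$ in $X$ is canonically determined by $v$, but pinning down $v(a_1)$ itself inside $\dcleq(\code{v})$ rather than merely inside $\acleq(\code{v})$ requires care, as an arbitrary $\Vg$-orbit in $X$ need not carry canonical points. Definable spherical completeness plays a dual role here: it guarantees the existence of best approximations throughout the construction and, combined with henselianity (as in \cref{points balls}), enables canonical selection within otherwise non-canonical orbits. Once this canonicity is secured, the lifting and the verification of the triangular formula are routine.
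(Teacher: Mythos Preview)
Your overall strategy --- induction on $n$ via quotient valuations and best approximations, with definable spherical completeness supplying the maxima --- is correct and is what underlies the references the paper cites for (1); your treatment of (2) via the valuation $v_R$ is exactly the paper's argument.

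The gap is in how you select $a_1$. Picking it in an ``extremal $\Vg$-orbit of $v(\K^n)$'' creates two problems. First, it does not yield $v(a_1)\in\dcleq(\code{v})$: even granting that such an orbit is canonical (and it is unclear what ``extremal'' means here, since the $\Vg$-orbits in $X$ need not be linearly ordered), a \emph{point} of that orbit typically is not, and your appeal to \cref{points balls} does not help --- that result concerns balls in $\K$, not $\Vg$-orbits in an abstract ordered $\Vg$-set. Second, if $a_1$ is not tied to a coordinate direction, the quotient $\K^n/\K a_1$ carries no canonical identification with $\K^{n-1}$, and your lifts $a_2,\ldots,a_n$ have no reason to form a \emph{triangular} basis.

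Both issues disappear if you run the induction through the standard coordinate flag rather than through an intrinsically chosen direction. Restrict $v$ to the canonical coordinate hyperplane $H\cong\K^{n-1}$ and apply the inductive hypothesis there; then take the remaining basis vector to be any point of the $\code{v}$-definable affine coset $e + H$ (with $e$ the complementary standard basis vector) on which $v$ is maximal. Definable spherical completeness, applied one coordinate at a time via the already-separated basis of $H$, guarantees that this maximum is attained, and its value --- being the supremum of the $\code{v}$-definable function $v$ over an $\emptyset$-definable set --- lies in $\dcleq(\code{v})$. Triangularity and canonicity then come for free.
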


A basis as in the fist assertion is said to be separated. A module as in the
second assertion is said to be of type \(c = (c_i)_{i<n}\).

\begin{proof}
If \(M\) is (elementarily equivalent to a) maximally complete field, the first
assertion is \cite[Lemma~5.7]{Vic-EIACk}. If \(M\) is only definably spherically
complete, the same proof works using \cite[Claim~3.3.9]{HilRid-EIAKE} instead of
\cite[Fact~2.55]{Vic-EIACk}.

Let us now prove the second assertion. For every \(x\in \K^n\), we define
\(v_R(x) = \{\v(\lambda) \mid \lambda x \in R\}\) a (non-empty) cut of \(\Vg\).
We order them by inclusion (so \(\Vg = v_R(0)\) is the maximal element and
\(\{\infty\}\) is the minimal element). Note that, for every \(x\in \K\),
\(v_R(\lambda x) = -\v(\lambda) + v_R(x)\) and for this action of \(\Vg\) on the
set of cuts, \(v_R\) is an \(M\)-definable valuation.

By the first assertion, we can find a separated triangular basis \((a_i)_{i}\)
of \(\K^n(M)\), such that $v_{R}(a_{i}) \in \dcleq(\ulcorner R\urcorner)$. Then
\(\sum_i \lambda_i a_i \in R\) if and only if \(0 = \v(1) \in v_R(\sum_i
\lambda_i a_i) = \min_i - \v(x_i) + v_R(a_i)\), \emph{i.e.} \(\v(\lambda_i) \in
v_R(a_i)\) for all \(i\). Let \(c_i\in\Cut(\dcleq({\code{R}}))\) be such that
\(v_R(a_i) = \Cc_{c_i}\). We then have \(R = \sum_i \I_{c_i} a_i\), as required. 
\end{proof}

\begin{notation}
\label{Mod}
\begin{enumerate}
\item We write $B_{n}$ to denote the set of $n \times n$ upper triangular and
invertible matrices. We write $\Dd_{n}\leq \B_n$ for the subgroup of diagonal
matrices and \(\Uu_n\leq \B_n\) for the subgroup of unipotent matrices, that is
upper triangular matrices with ones on the diagonal.

\item For every \(n\)-tuple \(c\) in \(\Cut\), we define \(\Mod_c\) to be the
interpretable set of modules of type \(c\). Let \(\Can_c = \sum \I_{c_i} e_i\)
be the canonical module of type \(c\), where \(e_i\) is the canonical basis of
\(K^n\). Then \(\sum_{i< n} \I_{c_i} a_i = A \cdot \Can_c\) where \(A \in \B_n\)
is the upper triangular matrix of the \(a_i\). In other words, \(\B_n\) acts
transitively on \(\Mod_c\) and
\[\Mod_c \isom \B_n / \Stab(\Can_c).\] We will now identify \(\Mod_c\) with this
quotient of \(\B_n\) and for every \(s\in\Mod_c\), we write \(R_s\) for the
\(\O\)-module of type \(c\) coded by \(s\). Let \(\mu_c : \B_n\to \Mod_c\)
denote the natural quotient map.
\end{enumerate}
\end{notation}

If \(\Delta\leq\Vg\) is a (definable) convex subgroup, we write \(\O_{\Delta} =
\{x\in\K\mid \exists\delta\in\Delta\v(x)\geq \delta\}\) for the associated
(definable) valuation ring. If \(I,J \leq \K\) are two (definable)
\(\O\)-submodules, let \((I:J)\) denote the (definable) \(\O\)-submodule
\(\{x\in\K \mid xJ \subseteq I\}\).

\begin{proposition}
\label{descr stabilizer}
Let \(c\) be a tuple in \(\Cut\). For every \(a\in \B_n\), we have
\[a\in \Stab(\Can_c)\text{ if and only if }\left\{
\begin{array}{cl}
a_{i,i} \in \O_{\Delta_{c_i}}^\times&\text{for all }i<n\text{, and}\\
a_{i,j} \in (\I_{c_i}: \I_{c_j}) &\text{for all i < j < n.}
\end{array}
\right.\]
\end{proposition}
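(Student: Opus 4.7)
My plan is to prove the two inclusions separately, using that \(\Can_c\) has a ``column-by-column'' structure adapted to the upper triangular action. The key ingredient in dimension one is the elementary observation that \((\I_c : \I_c) = \O_{\Delta_c}\): indeed, \(x\I_c \subseteq \I_c\) iff \(\v(x) + \Cc_c \subseteq \Cc_c\) iff \(\v(x) \in \Delta_c\) (by definition of \(\Delta_c\)); hence the multiplicative stabilizer of \(\I_c\) in \(\K^\times\) is exactly \(\O_{\Delta_c}^\times\). This is the \(n=1\) case of the statement and will be used as a black box in the general case.

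For the forward direction, assume \(a \in \Stab(\Can_c)\). Fix \(j < n\). For any \(x \in \I_{c_j}\), \(xe_j \in \Can_c\), so
\[a(x e_j) = \sum_{i\leq j} x a_{i,j} e_i \in \Can_c = \sum_i \I_{c_i}e_i,\]
and by comparing the \(e_i\)-coordinate (the \(e_i\) being free), \(x a_{i,j} \in \I_{c_i}\) for every \(i \leq j\). Since this holds for every \(x\in\I_{c_j}\), we get \(a_{i,j}\I_{c_j} \subseteq \I_{c_i}\), \emph{i.e.} \(a_{i,j} \in (\I_{c_i}: \I_{c_j})\). Taking \(i = j\) gives \(a_{j,j} \in \O_{\Delta_{c_j}}\) by the \(n=1\) case, and applying the same argument to \(a^{-1}\in \Stab(\Can_c)\) yields \(a_{j,j}^{-1}\in \O_{\Delta_{c_j}}\), hence \(a_{j,j}\in \O_{\Delta_{c_j}}^\times\).

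For the converse, I would first observe that the \emph{direct} inclusion \(a\cdot\Can_c \subseteq \Can_c\) is immediate from the conditions: for each \(j\),
\[a\cdot (\I_{c_j}e_j) = \sum_{i\leq j} (a_{i,j}\I_{c_j}) e_i \subseteq \sum_{i\leq j} \I_{c_i}e_i,\]
using \(a_{j,j}\I_{c_j} = \I_{c_j}\) on the diagonal and \(a_{i,j}\I_{c_j}\subseteq \I_{c_i}\) off-diagonally. Summing over \(j\) gives \(a\cdot\Can_c \subseteq \Can_c\). To conclude equality, it suffices to show that the set \(H\) of upper triangular invertible matrices satisfying the stated conditions is stable under inversion; once this is done, applying the direct inclusion to \(a^{-1}\in H\) yields the reverse inclusion.

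The main step — and the only one requiring care — is this closure under inversion. I would check it inductively on \(j-i\). On the diagonal, \((a^{-1})_{i,i} = a_{i,i}^{-1}\in \O_{\Delta_{c_i}}^\times\) since this group is closed under inverses. For off-diagonal entries, the relation \(\sum_{i\leq k\leq j} a_{i,k}(a^{-1})_{k,j} = 0\) gives
\[(a^{-1})_{i,j} = -a_{i,i}^{-1}\sum_{i<k\leq j} a_{i,k}(a^{-1})_{k,j}.\]
By induction the entries \((a^{-1})_{k,j}\) with \(k>i\) already lie in \((\I_{c_k}:\I_{c_j})\), and an elementary check shows that \((\I_{c_i}:\I_{c_k})\cdot (\I_{c_k}:\I_{c_j}) \subseteq (\I_{c_i}:\I_{c_j})\), that this ideal is closed under addition, and that multiplication by \(\O_{\Delta_{c_i}}^\times\) preserves it (since such units stabilize \(\I_{c_i}\)). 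This produces \((a^{-1})_{i,j}\in (\I_{c_i}:\I_{c_j})\) and completes the argument. This last combinatorial verification on fractional ideals is the only point where one must be slightly attentive, but it is routine.
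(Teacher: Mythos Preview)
Your proof is correct and close in spirit to the paper's, though organized differently. One small slip: the chain ``\(x\I_c \subseteq \I_c\) iff \(\v(x) + \Cc_c \subseteq \Cc_c\) iff \(\v(x) \in \Delta_c\)'' is not quite right --- the second ``iff'' fails, since any \(\v(x)\geq 0\) satisfies \(\v(x)+\Cc_c\subseteq\Cc_c\) whether or not it lies in \(\Delta_c\). What is true (and what you actually use) is that \((\I_c:\I_c)=\O_{\Delta_c}\) as a valuation ring, and that the \emph{multiplicative} stabilizer \(\{x\mid x\I_c=\I_c\}\) equals \(\O_{\Delta_c}^\times=\{x\mid \v(x)\in\Delta_c\}\); your subsequent steps only rely on these correct facts, so the argument goes through.

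The paper proceeds by induction on \(n\) via a block decomposition \(a=\left(\begin{smallmatrix}a_{0,0}&b\\0&e\end{smallmatrix}\right)\), reducing the stabilizer condition to the \((n-1)\)-dimensional case together with the scalar condition on \(a_{0,0}\) and the off-diagonal condition on \(b\); equality is then obtained from the explicit block formula for \(a^{-1}\). Your route is more coordinate-wise: a direct column-by-column extraction for the forward direction, and for the converse you show the described set \(H\) is closed under inversion by the cofactor-style recursion on \(j-i\). Both arguments ultimately hinge on the same fractional-ideal identities (notably \((\I_{c_i}:\I_{c_k})(\I_{c_k}:\I_{c_j})\subseteq(\I_{c_i}:\I_{c_j})\) and stability under \(\O_{\Delta_{c_i}}^\times\)); your version is slightly more elementary in that it avoids the inductive block machinery, while the paper's block form makes the structure \(\Stab(\Can_c)=F_\Uu\rtimes F_\Dd\) (used later in \cref{decomp solv}) more transparent.
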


\begin{proof}
We proceed by induction on \(n\). Write \(a\) as
\(\left(\begin{smallmatrix}a_{0,0}&b\\ 0& e\end{smallmatrix}\right)\), with
\(e\in \B_{n-1}\), and \(c\) as \((c_0,d)\), with \(d\in \Cut^{n-1}\). If \(a
\Can_c \subseteq \Can_c\), then, considering the action on \(\I_{c_{0}}\) and
\(\Can_{d}\), we see that \(a_{0,0} \I_{c_0} \subseteq \I_{c_0}\), \(b
\Can_{d}\subseteq \I_{c_0}\) --- so, considering the action on each
\(\I_{c_{j}}\), for every \(j > 0\), \(a_{0,j} \I_{c_j} \subseteq \I_{c_0}\) ---
and \(e \Can_{d}\subseteq \Can_{d}\); and the converse also holds.

Since \(a \Can_c = \Can_c\) if, moreover, \(a^{-1} \Can_c =
\left(\begin{smallmatrix}a_{0,0}^{-1}&-a_{0,0}^{-1} be^{-1}\\
0& e^{-1}\end{smallmatrix}\right)\Can_c \subseteq \Can_c\), it follows that we
must further have \(a_{0,0} \I_{c_0} = \I_{c_0}\), \emph{i.e.} \(\v(a_{0,0}) \in
\Delta_{c_0}\) and \(e \Can_{d} = \Can_{d}\). These conditions are sufficient
since, in that case, \(a_{0,0}^{-1}be^{-1}\Can_{d} = a_{0,0}^{-1}b\Can_d
\subseteq a_{0,0}^{-1} \I_{c_0} =  \I_{c_0}\). The claim now follows by
induction.
\end{proof}

\begin{definition}
Let \(\pCut = \Cut\setminus \{\emptyset,\Vg\}\) and \(\Mod\) be the disjoint
union of all the \(\Mod_c\) where \(c\) is a tuple in \(\pCut\).
\end{definition}

Note that any \(s\in\Mod\) determines the unique \(c\) such that \(s\in
\Mod_c\).

\begin{corollary}
\label{code mod}
Any \(M\)-definable \(\O\)-submodule \(R\) of \(\K^n\) is coded in \(\K\cup\Mod\).
\end{corollary}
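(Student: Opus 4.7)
The plan is to put $R$ into the upper-triangular normal form of \cref{uppertrian}(2) and reduce to the case where all the cuts involved are in $\pCut$. We may harmlessly pass to a definably spherically complete elementary extension of $M$ (e.g., a sufficiently saturated one), since being coded in $\K \cup \Mod$ is preserved in both directions by elementary equivalence. Applying \cref{uppertrian}(2), we write $R = A \cdot \Can_c = \sum_{i<n} \I_{c_i} a_i$, with $A \in \B_n$ whose columns $(a_i)$ form a triangular basis and $c = (c_i) \in \Cut^n$ satisfying $c_i \in \dcleq(\code{R})$.

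If $c \in \pCut^n$, the conclusion is immediate: the coset $\mu_c(A) = A \cdot \Stab(\Can_c) \in \Mod_c$ is exactly the fibre of $\mu_c$ above $R$, and is therefore interdefinable with $R$ (the tuple $c$ itself is recovered from the sort $\Mod_c$ to which $\mu_c(A)$ belongs). In the general case, I would first extract the purely $\K$-linear part of $R$: set $V_R := \{v \in \K^n \mid \K v \subseteq R\}$, the largest $\K$-subspace of $\K^n$ contained in $R$, and $V^R := \K \cdot R$, the $\K$-span of $R$. Both are $\K$-subspaces definable from $R$, hence coded in $\K$ by standard linear algebra. The triangular decomposition gives $V_R = \sum_{c_i = \Vg} \K a_i$ and $V^R = \sum_{c_i \neq \emptyset} \K a_i$, so the quotient $\overline R := R/V_R$ is an $\O$-submodule of the definable $\K$-vector space $\overline V := V^R/V_R$, of type $c' \in \pCut^m$ where $m = \dim \overline V$.

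Fix any $\K$-linear isomorphism $\phi : \overline V \to \K^m$ definable from the codes of $V_R$ and $V^R$ (for instance, extracted canonically from the reduced row-echelon forms of those subspaces). Then $\phi(\overline R) \in \Mod_{c'} \subseteq \Mod$, and since $R$ is recovered from the triple $(V_R, V^R, \overline R)$ via the quotient map, this produces the desired code in $\K \cup \Mod$. The delicate step is the last one: one must check that the canonical isomorphism $\phi$ lies in $\dcleq$ of the $\K$-codes of $V_R$ and $V^R$, so that $\phi(\overline R)$ genuinely belongs to $\dcleq(\code{R})$. This is a routine linear-algebra bookkeeping task once the codes are fixed canonically.
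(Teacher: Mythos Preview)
Your proof is correct and follows essentially the same route as the paper: your $V_R$ and $V^R$ are exactly the paper's $W$ and $V$, and both arguments reduce to coding the image of $R$ in $V^R/V_R \cong \K^r$, which has type in $\pCut$ and is therefore coded in $\Mod$. Your explicit passage to a definably spherically complete (e.g.\ saturated) extension before invoking \cref{uppertrian}(2) makes precise something the paper leaves implicit, and your description of $\phi$ via row-echelon data is a legitimate way to produce the $\K(\code{R})$-definable isomorphism the paper simply asserts exists.
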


\begin{proof}
Let \(V\subseteq \K^n\) be the \(\K\)-span of \(R\) and \(W = \{x\in\K^n\mid \K
x \subseteq R\}\). Then, by \cite[Lemma~4.3]{Joh-EIACVF}, \(V/W\) is
\(\K(\code{R})\)-definably isomorphic to some \(\K^r\) and \(R\) is entirely
determined by its image in \(V/W\). So we may assume \(V = \K^r\) and \(W = 0\)
and hence that \(R\) is of type \(c\) with \(c\in \pCut\). By definition, it is
coded in \(\Mod_c\).
\end{proof}

\begin{remark}
There is a lot of redundancy in \(\Mod\). If \(c\) and \(c'\) are tuples in
\(\Cut\) of the same length such that for every \(i < n\), \(c_i'\) is a
translate of \(c_i\), then there is a natural bijection between \(\Mod_c\) and
\(\Mod_{c'}\) given by the action of a diagonal matrix.

If there exists an (ind-)interpretable subset \(\Cut'\subseteq\Cut\) such that any
definable cut is of the form \(a + \Cc_c\) for a unique \(c\in \Cut'\), it
follows that every \(M\)-definable \(\O\)-submodule of \(\K^n\)  is coded in
\(\K\cup\bigcup_{c\in \Cut'\setminus\{\emptyset,\Vg\}} \Mod_c\). Similarly, we
can replace \(\Cut\) by \(\Cut'\) in the definition of the geometric sorts
(\cref{geom sorts}).

This is the case, for example, in ordered abelian groups of bounded regular rank
(\emph{cf}. \cite[Corollary~2.24]{Vic-EIACk}).
\end{remark}

\begin{remark}
\label{code gballs}
Any \(M\)-definable generalized ball is inter-definable with the
sub-\(\O\)-module \(R\) of \(\K^2\) generated by \(b\times\{1\}\); indeed \(b =
\{x\in\K\mid (x,1)\in R\}\). So generalized balls are coded in \(\Mod\).
\end{remark}

Let us now describe the structure of \(\Mod\). The solvability of the upper
triangular invertible matrices will play a central role in this description.

We go through the elements of an upper triangular matrix diagonal by diagonal
starting with the middle diagonal, and in each diagonal, we proceed from top to
bottom. In other words, we order pairs \((i,j)\) such that \(i\leq j < n\) first
by \(j - i\) and then by \(i\). We will identify the set of such pairs with the
set of non-negative integers \(< n(n+1)/2\), according to that order.

For every pair \((i,j)\), let \(p_{i,j} : \B_n \to \K\) be the projection on
coordinate \((i,j)\). Let also \(\epsilon_{i,j} = 1\) if \(i = j\) and \(0\)
otherwise. For every pair \(\ell\), let \(G_{\ell} = \{a \in \B_n \mid p_{k}(a)
= \epsilon_k,\ \forall k < \ell\}\). Then \(G_0 = \B_n\) and \(G_{n(n+1)/2} =
\{\mathrm{id}\}\). By choice of the order, for every \(\ell\), \(G_{\ell+1}
\triangleleft G_{\ell}\) and \(p_\ell\) induces an isomorphism from \(G_{\ell} /
G_{\ell+1}\) to \(\Gm\), if \(\ell < n\), and to \(\Ga\) otherwise. Note also
that \(H_\ell =\{a\in G_\ell \mid p_k(a) = \epsilon_k,\ \forall k > \ell\}\), is
a section of \(p_\ell\) restricted to \(G_\ell\) and hence \(G_\ell =
G_{\ell+1}\rtimes H_\ell\).

Furthermore, we have \(G_n = \Uu_n\), \(\B_n = \Uu_n\rtimes \Dd_n\) and, for
every \(\ell \geq n\), \(G_\ell\) is central in \(G_n\) module \(G_{\ell+1}\)
--- actually modulo the next upper triangular group \(G_{0,j}\),--- if \(\ell\) is
a pair \((i,i+j-1)\). In particular \(G_\ell \trianglelefteq \Uu_n\).

We can now prove the following unary decomposition.

\begin{proposition}
\label{decomp solv}
Let \(s \in \Mod_c\). There exists a finite tuple \(b = (b_\ell)_{\ell <
n(n+1)/2}\) in \(\eq{M}\) --- we identify each \(b_\ell\) with a subset of some
\(\K^{r_\ell}\) --- and \(c b_{<\ell}\)-interpretable sets \(X_\ell\) such that:
\begin{itemize}
\item for every \(\ell\), \(b_\ell\in X_\ell\);
\item \(\dcleq(s) = \dcleq(cb)\);
\item if \(\ell < n\), then \(X_\ell = \Vg/\Delta_{c_i}\), where \(\ell = (i,i)\);
\item if \(\ell \geq n\), then \(X_\ell\) has a \(cb_{<\ell}\)-definable
\(\K/\I_\ell\)-torsor structure where \(\I_\ell\) is a \(cb_{<n}\)-definable
multiple of \((\I_{c_i}:\I_{c_j})\) and \(\ell = (i,j)\).
\end{itemize}

Moreover, for any  \(n\leq \ell < n(n+1)/2\) and any choice of \(a_{k} \in
b_{k}\), for \(k < \ell\), there is a (uniformly) \(c a_{<\ell}\)-definable
isomorphism of torsors \(f_\ell : X_\ell \to \K/\I_\ell\) and a \(c
a_{<\ell}\)-definable function \(g_\ell : f_\ell(b_\ell) \to b_\ell\).
\end{proposition}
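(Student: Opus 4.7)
The plan is to construct $b = (b_\ell)_\ell$ iteratively, going through the coordinates in the prescribed order and using the filtration $\{G_\ell\}$ together with the explicit description of $S = \Stab(\Can_c)$ from Proposition \ref{descr stabilizer}. At each step, the residual right-action of $S$ on the $\ell$-th coordinate is either by a unit in $\O_{\Delta_{c_i}}^\times$ (for diagonal $\ell$) or by translation by a suitable module (for off-diagonal $\ell$), and we set $b_\ell$ to be the corresponding equivalence class.

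For the diagonal case $\ell=(k,k)$ with $k<n$: pick any representative $a\in\B_n$ of $s$ and set $b_\ell:=\v(a_{k,k})+\Delta_{c_k}\in X_\ell:=\Vg/\Delta_{c_k}$. This is well-defined on $s$ since right-multiplication by $s'\in S$ scales $a_{k,k}$ by $s'_{k,k}\in\O_{\Delta_{c_k}}^\times$, of valuation in $\Delta_{c_k}$. Given a lift $a_\ell\in\K^\times$ with $\v(a_\ell)\in b_\ell$, right-multiplying $a$ by a suitable element of $S\cap\Dd_n$ produces a representative $\tilde a$ with $(k,k)$-entry exactly $a_\ell$, which we adopt for later steps.

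For the off-diagonal case $\ell=(i,j)$, $i<j$, we proceed inductively. Assume $b_{<\ell}$ have been defined, lifts $a_{<\ell}$ chosen, and a representative $\tilde a$ of $s$ fixed matching these lifts at all earlier positions. The residual stabilizer of this data lies in $S\cap\Uu_n$ (as the diagonal is now fixed exactly) and is further cut out by the linear equations encoding preservation of the earlier off-diagonal entries $(i',j)$ with $i'<i$. A direct computation yields
\[
(\tilde a\cdot s')_{i,j} - \tilde a_{i,j} \;=\; \tilde a_{i,i}\,s'_{i,j} + \sum_{i<k<j}\tilde a_{i,k}\,s'_{k,j}
\]
for admissible $s'$, and the range is a $cb_{<\ell}$-definable $\O$-submodule $\I_\ell\subseteq\K$. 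We set $b_\ell:=\tilde a_{i,j}+\I_\ell\in X_\ell:=\K/\I_\ell$. The key check is that $\I_\ell$ is a definable multiple of $(\I_{c_i}:\I_{c_j})$ depending only on $cb_{<\ell}$, not on the chosen lifts: the leading term $\tilde a_{i,i}\,s'_{i,j}$ sweeps out $\tilde a_{i,i}(\I_{c_i}:\I_{c_j})$, which depends only on $b_i$ because $\O_{\Delta_{c_i}}^\times$ preserves $(\I_{c_i}:\I_{c_j})$, while the cross terms, constrained by the preservation equations from earlier positions in column $j$, contribute translates in the same module.

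For the equality $\dcleq(cb)=\dcleq(s)$, the inclusion $\dcleq(cb)\subseteq\dcleq(cs)$ is immediate from the construction; conversely, any automorphism fixing $cb$ fixes every $b_\ell$ and, by a symmetric inductive reconstruction of a representative, fixes the coset $s$. The moreover clause falls out of the construction: once lifts $a_{<\ell}$ are fixed, a canonical base-point in $X_\ell$ is pinned down, giving the isomorphism $f_\ell\colon X_\ell\to\K/\I_\ell$, and $g_\ell$ reads off a specific element of the coset $b_\ell$ from the $(i,j)$-entry of $\tilde a$. The main obstacle is the off-diagonal computation, specifically verifying that the cross terms $\tilde a_{i,k}\,s'_{k,j}$ really do land in a definable multiple of $(\I_{c_i}:\I_{c_j})$, which requires solving the linear system imposed by preservation of the earlier entries in column $j$ and tracking how constraints propagate between positions.
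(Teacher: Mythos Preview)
Your approach has a genuine gap: the $b_\ell$ you construct for off-diagonal $\ell=(i,j)$ are \emph{not} canonical --- they depend on the chosen diagonal lifts, not just on $s$ and $b_{<\ell}$. Concretely, after right-multiplying $g$ by the diagonal matrix in $S\cap\Dd_n$ that sets the $(k,k)$-entries to your chosen lifts $a_k$, the $(i,j)$-entry of $\tilde a$ becomes $g_{i,j}\,a_j/g_{j,j}$. Changing the lift $a_j$ to $\alpha a_j$ with $\alpha\in\O_{\Delta_{c_j}}^\times$ therefore shifts $b_\ell$ by $(\alpha-1)g_{i,j}a_j/g_{j,j}$, and there is no reason this should lie in $\I_\ell=\tilde a_{i,i}(\I_{c_i}:\I_{c_j})$; indeed it fails whenever $\Delta_{c_j}\not\subseteq\Delta_{c_i}$ and $g_{i,j}$ is suitably chosen. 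So $b_\ell\notin\dcleq(s)$, and $\dcleq(cb)=\dcleq(s)$ cannot hold. (Incidentally, your ``main obstacle'' --- the cross terms --- is not actually an obstacle: fixing all earlier entries forces $s'_{k,j}=0$ for $i<k<j$ by an easy induction on the diagonal distance, so those sums vanish outright. But this does not address the real problem.)

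The paper avoids this by \emph{not} working with matrix entries directly. It first decomposes $g=ud$ with $u\in\Uu_n$, $d\in\Dd_n$, and invokes the semidirect product $F=F_\Uu\rtimes F_\Dd$ (together with \cite[Lemma~11.10]{HHM}) to reduce to coding $dF_\Dd$ and $uF_\Uu^d$ separately, both of which are canonical from $s$. The diagonal part gives the $b_\ell$ for $\ell<n$ as you do. For $\ell\geq n$, the paper sets $b_\ell = uF_\Uu^d G_{\ell+1}$ and $X_\ell = uF_\Uu^d G_\ell / F_\Uu^d G_{\ell+1}$ --- an \emph{abstract} coset space, not a $\K$-quotient. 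Normality of $G_\ell$ in $\Uu_n$ and centrality of $G_\ell/G_{\ell+1}$ ensure $F_\Uu^d G_{\ell+1}\trianglelefteq F_\Uu^d G_\ell$, so $X_\ell$ is a torsor for $F_\Uu^d G_\ell/F_\Uu^d G_{\ell+1}\cong \K/d_id_j^{-1}(\I_{c_i}:\I_{c_j})$. The identification $f_\ell:X_\ell\to\K/\I_\ell$ comes only afterwards, from a choice of lifts $a_{<\ell}$ --- it is $f_\ell$, not $b_\ell$, that carries the lift-dependence. Your construction collapses $X_\ell$ and $\K/\I_\ell$ from the outset, which is exactly where canonicity is lost.
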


\begin{proof}
Let \(F = \Stab(\Lambda_{c})\), we identify \(s\) with a coset \(gF\) for some
\(g\in\B_n\). Let \(d\in \Dd_n\) and \(u\in\Uu_n\) be such that \(g = ud\). Note
that the map \(B_n \to B_n/U_n \simeq D_n\) is the projection on the diagonal
coordinates. Therefore, by \cref{descr stabilizer}, the image of \(F\) in
\(B_n/U_n\) is identified with \(F_\Dd = F\cap \Dd_n\). Writing \(F_\Uu = F\cap
\Uu_n\), we thus have \(F = F_\Uu \rtimes F_\Dd\). By (the proof of)
\cite[Lemma~11.10]{HHM}, \(\dcleq(s) = \dcleq(c,\code{d
F_\Dd},\code{uF_\Uu^d})\), where \(F_\Uu^d = dF_U d^{-1} = dFd^{-1}\cap \Uu_n\)
does not depend on the choice of \(d\) in \(d F_D\). Then, by \cref{descr
stabilizer}, \(\Dd_n/F_\Dd \isom \prod_{\ell<n} \K^\star/\O^\star_{\Delta_\ell}
\isom \prod_{\ell<n} \Vg/\Delta_i\) and, for every \(\ell < n\), we chose
\(b_\ell = \val_{\Delta_\ell}(d_\ell)\) to be the \(\ell\)-th coordinate in this
product. Note that \(\dcl(c\code{d F_D}) = \dcl(c b_{<n})\).

Now, for every \(\ell \geq n\), note that \(F_\Uu^d G_\ell = G_\ell F_\Uu^d\) is
a subgroup, since \(G_\ell\) is normal in \(\Uu_n\) and moreover, \(F_\Uu^d
G_{\ell+1} \trianglelefteq F_\Uu^d G_\ell\) since for every \(g\in G_{\ell}\),
\((F_\Uu^d)^g \subseteq F_\Uu^d G_{\ell+1}\) by centrality of the sequence. For
every \(\ell\geq n\), let \(X_\ell = u F_\Uu^d G_\ell / F_\Uu^d G_{\ell+1}\) for
the right regular action and \(b_\ell = u F_\Uu^d G_{\ell+1} \in X_\ell\). Then
\(b_\ell\) is \(s\)-definable, \(X_n = \Uu_n/F_\Uu^d G_{n+1}\) is
\(cb_{<n}\)-definable and, if \(\ell > n\), the set \(X_\ell = b_{\ell}/F_\Uu^d
G_{n+1}\) is \(cb_{<n}b_{\ell-1}\)-definable. Also, \(X_\ell\) is
\(cb_{<\ell}\)-definably a (right) torsor for the group
\begin{align*}
F_\Uu^d G_\ell / F_\Uu^d G_{\ell+1}
&\isom G_\ell/(F_\Uu^d G_{\ell+1})\cap G_{\ell}\\
&\isom G_\ell/(F_\Uu^d\cap G_{\ell}) G_{\ell+1}\\
&\isom \K/p_{\ell}(F_\Uu^d\cap G_{\ell})\\
&\isom \K/d_id_j^{-1}(\I_{c_i}:\I_{c_j}),
\end{align*}
where \(\ell = (i,j)\) and the last isomorphism follows from \cref{descr
stabilizer} --- these isomorphisms are \(cb_{<n}\)-definable. Let \(\I_\ell =
d_id_j^{-1}(\I_{c_i}:\I_{c_j})\), then \(\I_\ell\) only depends on \(b_i =
v_{\Delta_i}(d_i)\) and \(b_j = \val_{\Delta_j}(d_j)\) and thus is
\(b_{<n}\)-definable. Since \(b_{n(n+1)/2 - 1} = uF^d_U\), we have \(s\in
\dcl(c,b_{<n},b_{n(n+1)/2-1})\). This concludes the first part of the
proposition.

Let us now fix \(a_{\ell} \in b_{\ell}\), for all \(\ell < n(n+1)/2\). Note that
if \(\ell < n\), \(v(a_\ell) = \val_{\Delta_\ell}(d_\ell)\) and we may assume that
\(d = a_{<n}\). If \(\ell = n\), as we saw above \(X_n = \Uu_n/F_\Uu^d G_{n+1}\)
is \(c a_{<n}\)-definably isomorphic to \(K/\I_{n}\). If \(\ell > n\), since
\(a_{\ell-1} \in b_{\ell-1} = u F_\Uu^d G_{\ell}\), we have \(a_{\ell-1} F_\Uu^d
G_{\ell+1} \in X_\ell\). This gives rise to a \(c a_{<n} a_{\ell-1}\)-definable
isomorphism \[f_\ell : X_\ell\isom F_\Uu^d G_\ell / F_\Uu^d G_{\ell+1} \isom
G_\ell/(F_\Uu^d G_{\ell+1})\cap G_{\ell} \isom \K/\I_\ell,\] where the first
isomorphism is induced by left multiplication by \(a_{\ell-1}^{-1}\) and the
third by the coordinate projection \(p_\ell\). Let \(h_\ell : \K \to H_\ell\) be
the section of \(p_\ell\). Then for every \(x\in f_\ell(b_\ell)\), since
\(p_\ell(h_\ell(x)) I_\ell = f_\ell(b_\ell)\), we have \(h_\ell(x) \in
a_{\ell-1}^{-1}u F_\Uu^d G_{\ell+1}\) and hence \(g(x) = a_{\ell-1} s(x) \in
b_\ell\).
\end{proof}

Recall that \(\mu_c : B_n \to \Mod_c\) is the canonical projection.

\begin{remark}
\label{decomp solv points}
Looking at the proof, all the operation applied to any upper triangular matrix
representation of \(s\) are actually field operations. It follows that
\cref{decomp solv} can be refined as follows. If \(A\leq\K(M)\) is a subfield, then:
\begin{itemize}
\item if \(s \in \mu_c(\B_n(A))\), then for every \(\ell\),
\(b_\ell(A)\neq\emptyset\);
\item if, for all \(k < \ell\), the set \(b_k(A)\) is nonempty, then \(I_\ell\)
is a translate of a \(c\)-definable cut by an element of \(\v(A)\), we have
\(f_\ell(b_\ell)(A) \neq \emptyset\) and \(g_\ell\) sends \(f_\ell(b_\ell)(A)\)
to \(b_\ell(A)\);
\item if, for all \(\ell\), the set \(f_\ell(b_\ell)(A)\) is nonempty, then, by
induction \(b_\ell(A)\neq \emptyset\) and we can choose all \(a_\ell \in
b_\ell(A)\) --- in particular, we can choose \(d = a_{<n} \in D_n(A)\) and \(u \in
b_{n(n+1)/2 -1}(A)\), so \(s\in \mu_c(\B_n(A))\).
\end{itemize}
\end{remark}

We conclude this section with one of our main uses for \cref{decomp solv}:
characterizing parameter sets over which every definable module has a
(triangular) basis. Recall that~\(\gBalls\) is the (ind-)interpretable set of
definable generalized balls.

\begin{corollary}
\label{lift G}
Let \(A\subseteq \eq{M}\) and \(C\subseteq\K(M)\) be a subfield. Assume that:
\begin{enumerate}
\item For every \(A\)-definable convex subgroup \(\Delta\leq\Vg\), we have
\((\Vg/\Delta)(\dcleq(A)) \subseteq \val_\Delta(C)\).
\item For every \(b\in \gBalls(\dcleq(AC))\) whose cut is a \(\v(C)\)-translate
of an \(A\)-definable cut, we have \(b(C) \neq\emptyset\).
\end{enumerate}
Then, for every tuple \(c\) in \(\Cut\), \(\Mod_c(\dcl(A)) \subseteq
\mu_c(\B_{|c|}(C))\).
\end{corollary}

\begin{proof}
Let \(s\in \Mod_c(\dcleq(A))\) --- in particular \(c\in \Cut(\dcleq(A))\) ---
and \(b = (b_\ell)_\ell\) be as in \cref{decomp solv}. By \cref{decomp solv
points}, it suffices to show that, \(f_\ell(b_\ell)(C) \neq \emptyset\). For
\(\ell < n\), since \(b_\ell \in \Vg/\Delta_\ell(\dcl(A))\), this follows from
the first assumption. If \(\ell\geq n\), by induction, \(f_\ell(b_\ell)\in
\K/\I_\ell\) is an \(AC\)-definable generalized ball whose cut is a
\(\v(C)\)-translate of an \(A\)-definable cut; and we conclude by the second
assumption.
\end{proof}

\subsection{The geometric sorts}\label{geosorts}

The goal of this section is to further simplify the codes of modules to
something more akin to the geometric sorts of \cite{HHM-EI}. This will be
crucial to classify \(\k\)-internal sets, when the value group is non discrete,
in \cref{kint}. 

Let \(M\) be an (expansion of a) valued field in some language \(\L\) with
stably embedded residue field.

\begin{definition}
\label{geom sorts}
\begin{enumerate}
\item Let \(\aCut\) denote \(\pCut\setminus\{\Vg_{>\gamma} : \gamma\in\Vg\}\)
--- unless \(\Vg\) is discrete, in which case, \(\aCut = \pCut\). Any module of
type a tuple \(c\) in \(\aCut\) is said to be \(\m\)-avoiding. Let \(\Lat\) be
the collection of codes for all \(\m\)-avoiding modules; that is \(\Lat =
\coprod_{c\in \aCut} \Mod_c\).
\item For every \(\O\)-module \(R\), let \(\red(R)\) denote the \(\k\)-vector
space \(R/\m R\) and let \(\red_R : R \to \red(R)\) denote the canonical
projection. We also define \(\Tor = \coprod_{R\in \Lat} \red(R)\).
\item Let \(\G = \K\cup\Lat\cup\Tor\) be the (generalized) geometric sorts.
\end{enumerate}
\end{definition}

\begin{remark}
\label{m idem}
For every \(c\in\pCut\), we have \(\m \I_c = \I_c\) if and only if \(\Cc_c \neq
\Vg_{\geq \gamma}\) for each \(\gamma\in\Vg\). Indeed, if \(\Cc_c\) does not
have a minimal element, then, for every \(x\in \I_c\), there exists \(a \in
\I_{c}\) such that \(\v(a) < \v(x)\). Then \(x a^{-1} \in\m\) and hence \(x = x
a^{-1} a \in \m \I_{c}\).

It follows that if \(c\) is some tuple in \(\pCut\) and \(R\) is an
\(\O\)-module of type \(c\). Then \(\red(R)\) has dimension \(|\{i \mid
\exists\gamma\in\Vg,\, \Cc_{c_i} = \Vg_{\geq \gamma}\}|\) over \(\k\).
\end{remark}

\begin{lemma}
\label{buildinglat}
Let \(R \subseteq \K^n\) be an \(\O\)-module of type \(c\) for some tuple \(c\)
in \(\pCut\). Then there exists an \(\code{R}\)-definable \(\m\)-avoiding module
\(\overline{R}\) containing \(R\) and such that \(\m R = \m \overline{R}\). In
particular, \(\red(R)\subseteq \red(\overline{R})\) is a subspace and
\(\dcleq(\code{R}) = \dcleq(\code{\overline{R}},\code{\red(R)})\).
\end{lemma}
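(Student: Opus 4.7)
I would define \(\overline{R}\) intrinsically as the ``\(\m\)-saturation''
\[\overline{R} := \{x \in \K^n \mid \m x \subseteq \m R\}.\]
This is an \(\O\)-submodule of \(\K^n\) that is \(\code{R}\)-definable (since \(\m\) is \(\emptyset\)-definable) and contains \(R\). The equality \(\m \overline{R} = \m R\) is then immediate: the inclusion \(\supseteq\) follows from \(R \subseteq \overline{R}\), and for \(\subseteq\), any element of \(\m\overline{R}\) is a finite sum \(\sum m_i y_i\) with \(m_i \in \m\) and \(y_i \in \overline{R}\), and by construction each term \(m_i y_i\) lies in \(\m R\).

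To verify that \(\overline{R}\) is \(\m\)-avoiding, I would pass to a definably spherically complete elementary extension and apply \cref{uppertrian} to decompose \(R = \sum_i \I_{c_i} a_i\) on a separated triangular basis. Separateness of the basis forces \(\overline{R} = \sum_i \overline{\I_{c_i}} a_i\) with \(\overline{\I_{c_i}} = \{x \in \K \mid \m x \subseteq \m\I_{c_i}\}\), and a direct computation using \cref{m idem} gives: if \(\Cc_{c_i} = \Vg_{>\gamma}\) for some \(\gamma\in\Vg\) (open-ball cut), then \(\overline{\I_{c_i}} = \I_{\gamma^-}\); otherwise (closed-ball or irrational cut), \(\overline{\I_{c_i}} = \I_{c_i}\). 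In every case the resulting cut is never of the form \(\gamma^+\), so the type of \(\overline{R}\) lies in \(\aCut\). In the discrete case the same computation yields simply \(\overline{R} = R\), which is already \(\m\)-avoiding since \(\aCut = \pCut\) there.

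For the last two assertions, the equality \(\m\overline{R} = \m R\) gives \(R \cap \m\overline{R} = R \cap \m R = \m R\), so the natural map \(R/\m R \to \overline{R}/\m\overline{R}\) is injective, realizing \(\red(R)\) as a \(\k\)-subspace of \(\red(\overline{R})\). Both \(\overline{R}\) and the subspace \(\red(R) \subseteq \red(\overline{R})\) are \(\code{R}\)-definable by construction, which gives the inclusion \(\dcleq(\code{\overline{R}}, \code{\red(R)}) \subseteq \dcleq(\code{R})\). Conversely, \(R\) is precisely the preimage of \(\red(R)\) under the \(\code{\overline{R}}\)-definable quotient \(\overline{R} \to \red(\overline{R})\), yielding the reverse inclusion. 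The only real subtlety is the cut-by-cut verification that \(\overline{R}\) is \(\m\)-avoiding; everything else is formal.
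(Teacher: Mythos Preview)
Your proof is correct and follows essentially the same route as the paper: the paper defines \(\overline{R} = \{x \mid \Vg_{>0}\subseteq v_R(x)\} = \{x \mid \m x \subseteq R\}\), which coincides with your \(\{x \mid \m x \subseteq \m R\}\) under the ambient density assumption \(\m^2=\m\), and then checks \(\m\)-avoidance and \(\m\overline{R}=\m R\) coordinatewise on a triangular basis before recovering \(R\) as \(\red_{\overline{R}}^{-1}(\red(R))\). One small simplification: you need not pass to a definably spherically complete extension or invoke separateness, since the hypothesis ``\(R\) is of type \(c\)'' already hands you a triangular basis \(R=\sum_i \I_{c_i}a_i\), and the direct-sum decomposition alone (no separateness) gives \(\overline{R}=\sum_i\overline{\I_{c_i}}a_i\).
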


\begin{proof}
Let \(v_R\) be the valuation defined in the proof of \cref{uppertrian}. Recall
that \(R = \{x\in\K^n \mid \Vg_{\geq 0} \subseteq v_R(x)\}\) and let
\(\overline{R} = \{x\in\K^n \mid \Vg_{> 0} \subseteq v_R(x)\}\). If \(e_i\) is a
(triangular) basis such that \(R = \sum_i \I_{c_i}e_i\), then \(\overline{R} =
\sum_i \overline{I}_{c_i} e_i\). 

For every \(\gamma\in\Vg\), let \(\gamma\O = \{x\in \K\mid \v(x)\geq\gamma\}\)
and \(\gamma\m = \{x\in \K\mid \v(x) > \gamma\}\). We have \(\overline{\gamma\O}
= \gamma\O\) and hence \(\m\overline{\gamma\O} = \gamma\m\). If \(I_c\) is
neither \(\gamma\O\) nor \(\gamma\m\) for any \(\gamma\in\Vg\), then
\(\m\overline{I}_{c_i} = \m\I_{c_i} = \I_{c_i}\). Finally, if \(\Vg\) is dense
then \(\overline{\gamma\m} = \gamma\O\) and \(\m\overline{\gamma\m} =
\gamma\m\). It follows that \(\m \overline{R} = \m R\).

The last assertion follows from the fact that
\(\red_{\overline{R}}^{-1}(\red(R)) = R\).
\end{proof}

Let us now recall, following \cite[Lemma~2.6.4]{HHM}, how to code definable
subspaces of \(\red(\overline{R})\). The following abstract conditions were
isolated in \cite{Hru-Groupoid}.

\begin{proposition}
\label{code subspace}
Let \(M\) be some \(\L\)-structure, \(\k\) be some stably embedded \(\L\)-definable
field and \(\bigcup_{s} V_s\) be a collection of finite dimensional
\(\L\)-definable \(\k\)-vector spaces which
\begin{itemize}
\item is closed under tensors: for every \(s,r\), there is an
\(\L\)-definable injection from the interpretable set \(V_s\tensor V_r\)
into some \(V_t\);
\item is closed under duals: for every \(s\), there is an \(\L\)-definable
injection from the interpretable set \(\dual{V_s}\) into some \(V_t\);
\item has flags: For every \(s\), there exists \(r,t\) and a
\(\L\)-definable exact sequence \(0 \to V_r \to V_s \to V_t\to 0\), with \(\dim(V_r) = 1\).
\end{itemize}
Then any definable subspace \(W\subseteq V_s\), is coded in \(\bigcup_{s} V_s\).
\end{proposition}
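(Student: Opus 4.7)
The plan is to reduce subspace coding to line coding via exterior powers, then induct on the ambient dimension using the flag hypothesis.

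For the reduction to lines: by iterated application of the tensor hypothesis, $V_s^{\otimes k}$ embeds into some sort $V_t$, inside which the $k$-th exterior power $\Lambda^k V_s$ appears as the image of the $\emptyset$-definable antisymmetrisation idempotent, hence as a $\emptyset$-definable subspace. The Plücker assignment $W \mapsto \Lambda^k W$ sends a $k$-dimensional subspace of $V_s$ to a line in $\Lambda^k V_s$, with inverse $\ell \mapsto \{v \in V_s : v \wedge \omega = 0\}$ for any nonzero $\omega \in \ell$. So $\code{W}$ and $\code{\Lambda^k W}$ are interdefinable, reducing the claim to coding lines in some $V_s$.

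For lines, I induct on $n = \dim V_s$. The case $n = 1$ is immediate since the only line is $V_s$ itself. For $n > 1$, apply the flag hypothesis to obtain a $\emptyset$-definable exact sequence $0 \to V_r \to V_s \to V_t \to 0$ with $\dim V_r = 1$, and let $\pi \colon V_s \to V_t$ denote the projection. Given a line $L \subseteq V_s$: if $L = V_r$ then $L$ is $\emptyset$-definable; otherwise $L \cap V_r = 0$ and $\bar L := \pi(L)$ is a line in $V_t$, which by the inductive hypothesis applied to the smaller ambient dimension $n-1$ is coded in $\bigcup_s V_s$. Over $\bar L$, the line $L$ is recovered as the image of the unique linear section $\sigma_L \colon \bar L \to V_s$ of $\pi$ whose image is transverse to $V_r$; this is a canonical element of the $\bar L$-definable vector space $\mathrm{Hom}(\bar L, V_s) \cong \dual{\bar L} \otimes V_s$. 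Invoking the tensor and dual hypotheses together with the already established code of $\bar L$, this space is identified (over $\code{\bar L}$) with a subspace of some sort $V_u$ in the collection, yielding a code for $L$, and hence for $W$.

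The main difficulty lies in the last step: arranging that $\sigma_L$ is represented by an actual element of some $V_u$ rather than merely by an element of an imaginary quotient of $\dual{V_t} \otimes V_s$. This requires carefully combining the three closure hypotheses with the inductive coding of $\bar L$, which promotes the twisted space $\dual{\bar L} \otimes V_s$ to a bona fide sort of the collection over $\code{\bar L}$. Once this identification is in place, the induction closes and produces the desired code.
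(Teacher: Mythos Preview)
Your overall architecture—Pl\"ucker embedding to reduce to lines, then induction on the ambient dimension via the flag hypothesis—is exactly the route the paper takes by citing Hrushovski's Proposition~5.2 and Lemma~5.6. However, the inductive step has a genuine gap at precisely the point you flag as the main difficulty.

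You correctly locate $\sigma_L$ in $\Hom(\bar L, V_s)\cong \dual{\bar L}\otimes V_s$ and note that the issue is embedding this space into some sort $V_u$ over $\code{\bar L}$. But the inductive hypothesis as you state it only produces a tuple in $\bigcup_s V_s$ interdefinable with $\bar L$; it does not exhibit $\bar L$ (or $\dual{\bar L}$) as isomorphic to a sort. Since $\bar L\subseteq V_t$, the dual $\dual{\bar L}$ is naturally a \emph{quotient} of $\dual{V_t}$, so $\dual{\bar L}\otimes V_s$ is only a quotient of $\dual{V_t}\otimes V_s\hookrightarrow V_u$, and $\sigma_L$ lands in an imaginary of $V_u$ rather than in $V_u$ itself—the very obstruction you identify but do not remove. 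The remedy is to strengthen the inductive hypothesis: for every line $\bar L$ in a sort of dimension $<n$, there is a $\code{\bar L}$-definable isomorphism $\bar L\cong V_v$ for some sort $V_v$ in the collection. This holds at the base (a one-dimensional sort is its own unique line) and propagates: with $\bar L\cong V_v$ over $\code{\bar L}$ one gets $\Hom(\bar L,V_s)\cong \dual{V_v}\otimes V_s\hookrightarrow V_u$ by the dual and tensor hypotheses, so $\sigma_L$ becomes a genuine point of $V_u$; and then $L=\sigma_L(\bar L)\cong V_v$ over $\code{L}$, so the strengthened hypothesis is inherited. A side remark: realizing $\Lambda^k V_s$ via the antisymmetrisation idempotent tacitly uses $\operatorname{char}(\k)=0$, which holds in the paper's application but is not among the stated hypotheses of the proposition.
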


\begin{proof}
By (the proof of) \cite[Proposition~5.2]{Hru-Groupoid}, \(W\) is coded in some
projective space \(\PP(V_r)\). By \cite[Lemma~5.6]{Hru-Groupoid}, given that the
family has flags, \(\PP(V_r)\) is coded in \(\bigcup_{s} V_s\).
\end{proof}

We consider once again an (expansion of a) valued field \(M\) in some language \(\L\).

\begin{definition}
\label{LinA}
For every \(A \subseteq \eq{M}\), let \(\Lin_A = \coprod_{s\in\Lat(\acleq(A))}
\red(R_s)\) with its \(A\)-induced structure.
\end{definition}

Before we prove that \cref{code subspace} can be applied to \(\Lin_A\), let us
prove the following useful computation:

\begin{lemma}
\label{colon m}
Let \(I,J\leq\K\) be (\(M\)-definable) \(\O\)-submodules. If \((I:J) = \m\),
then \(I = a\m\) and \(J = a\O\), for some \(a \in \K\).
\end{lemma}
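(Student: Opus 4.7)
The plan is to reduce the lemma to a cut-theoretic calculation. Since $(I:J) = \m$ is nonzero and proper, both $I$ and $J$ must be nonzero proper $\O$-submodules of $\K$, so by \cref{uppertrian} I may write $I = a_1 \I_c$ and $J = a_2 \I_d$ for some $a_1, a_2 \in \K^\times$ and $c, d \in \pCut$. A direct unwinding then shows
\[
(I:J) = (a_1/a_2)\, \I_e, \quad \text{where } \Cc_e = \{\gamma \in \Vg : \gamma + \Cc_d \subseteq \Cc_c\},
\]
so setting $\delta := \v(a_2) - \v(a_1)$, the hypothesis $(I:J) = \m = \I_{0^+}$ becomes the cut equation $\Cc_e = \Vg_{>\delta}$.

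The main step will be to show that this equation forces $\Cc_d = \Vg_{\geq \beta}$ (closed principal) and $\Cc_c = \Vg_{>\delta+\beta}$ (open principal) for some $\beta \in \Vg$. The idea is to exploit that $\delta \notin \Cc_e$ while $\delta + \epsilon \in \Cc_e$ for every $\epsilon \in \Vg_{>0}$. The first fact produces a witness $y \in \Cc_d$ with $\delta + y \notin \Cc_c$; the second, applied at that $y$, yields $\delta + y + \epsilon \in \Cc_c$ for every $\epsilon \in \Vg_{>0}$. Combined with upward closure of $\Cc_c$, this pins down $\Cc_c = \Vg_{>\delta + y}$. Running the inclusion $(\delta + \epsilon) + \Cc_d \subseteq \Cc_c$ against an arbitrary $y' \in \Cc_d$ gives $y' > y - \epsilon$ for all $\epsilon \in \Vg_{>0}$; if $y' < y$, substituting $\epsilon := y - y'$ yields $y' > y'$, a contradiction. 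Thus every $y' \in \Cc_d$ satisfies $y' \geq y$, which together with $y \in \Cc_d$ and upward closure gives $\Cc_d = \Vg_{\geq y}$; set $\beta := y$.

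The rest is bookkeeping. The shape of $\Cc_d$ gives $\I_d = b\O$ for any $b$ with $\v(b) = \beta$, hence $J = a_2 b\, \O$. The shape of $\Cc_c$ gives $\I_c = b'\m$ for any $b'$ with $\v(b') = \delta+\beta$, hence $I = a_1 b'\, \m$. A valuation count yields $\v(a_1 b') = \v(a_1) + \delta + \beta = \v(a_2) + \beta = \v(a_2 b)$, so $a_1 b'$ and $a_2 b$ differ by a unit of $\O$, and $a := a_2 b$ witnesses $I = a\m$ and $J = a\O$.

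The main obstacle is the cut-theoretic characterization above: it is elementary ordered-group arithmetic, but should be carried out uniformly for both densely and discretely ordered $\Vg$, so some care is needed to avoid inadvertently invoking density of the order (the argument as sketched relies only on upward closure of cuts and the fact that every element of $\Vg_{>\gamma}$ is of the form $\gamma + \epsilon$ for some $\epsilon \in \Vg_{>0}$).
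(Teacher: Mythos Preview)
Your argument is correct. Both your proof and the paper's reduce the problem to elementary ordered-group arithmetic on the value sets of \(I\) and \(J\), and the underlying combinatorics is the same; the organization differs. The paper works directly with \(\v(I)\) and \(\v(J)\): it first establishes \(\v(I)\subsetneq\v(J)\) via a stabilizer argument (noting that \(\Stab(J)=\O_\Delta^\times\) for some convex \(\Delta\), and \(\m\) cannot contain any \(\O_\Delta^\times\)), then shows \(\v(J)\) has a minimum \(\gamma_0\), and finally checks \(\v(J)\setminus\v(I)=\{\gamma_0\}\). You instead normalize \(I=a_1\I_c\), \(J=a_2\I_d\) from the outset, compute \((I:J)=(a_1/a_2)\I_e\) with \(\Cc_e=\{\gamma:\gamma+\Cc_d\subseteq\Cc_c\}\), and solve the resulting cut equation \(\Cc_e=\Vg_{>\delta}\) directly. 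Your route bypasses the stabilizer step entirely: the witness \(y\in\Cc_d\) with \(\delta+y\notin\Cc_c\) (which in the paper's language is an element of \(\v(J)\setminus\v(I)\)) drops out immediately from \(\delta\notin\Cc_e\). The trade-off is the auxiliary \(a_1,a_2,\delta\) that must be cancelled at the end, but this is harmless bookkeeping, and your handling of the dense/discrete dichotomy (relying only on upward closure and the tautology \(\Vg_{>\gamma}=\gamma+\Vg_{>0}\)) is sound.
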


\begin{proof}
We first argue that $\v(I) \subsetneq \v(J)$. If $\v(J) \subseteq \v(I)$, then
$J \subseteq I$ so $\Stab(J) \subseteq (I:J)$, where $\Stab(J)=\{ x \in K \mid
xJ=J\}$. Since $\Stab(J)=\O_{\Delta}^{\times}$ for some definable convex
subgroup $\Delta$ of the value group, and $\m$ does not contain
$\O_{\Delta}^{\times}$ then $\v(I) \subsetneq \v(J)$.

We aim to show that $\v(J)$ has a minimal element. Otherwise, given  $\gamma
\in \v(J) \backslash \v(I)$ there is some $\beta \in \v(J)$ such that
$\beta<\gamma$, thus $\gamma=\beta+\delta$ where $\delta= \gamma-\beta>0$. Take
$x \in \m$ and $y \in J$ such that $v(x)=\delta$ and $v(y)=\beta$.
Consequently, $xy \notin I$ since $v(xy)=\gamma$, so $(I:J)\neq \m$.
Let $\gamma_{0}$ be the minimal element of $\v(J)$ and $a \in K$ such that
$v(a)=\gamma_{0}$ thus $J=a\O$. To show that $I=a\m$ it is
sufficient to argue that $\v(J) \backslash \v(I)=\{\gamma_{0}\}$. If there is
some $\beta \in \v(J) \backslash \v(I)$ such that $\beta \neq \gamma_{0}$, then
$\beta-\gamma_{0}>0$. Take $x \in \m$ such that
$v(x)=\beta-\gamma_{0}$, then $x \in \m$ but $ax \notin I$, hence
$(J:I) \neq \m$. Thus, $I=a\m$, as required.
\end{proof}

\begin{proposition}
\label{Lin closed}
For every \(A \subseteq \eq{M}\), \(\Lin_A\) is a collection of finite
dimensional \(\k\)-vector spaces which is closed under tensors, duals and has
flags.
\end{proposition}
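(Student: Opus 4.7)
The plan is to verify each of the four closure properties. Finite-dimensionality of each $V_s$ is immediate: any separated triangular basis of $R_s$ provided by \cref{uppertrian} reduces modulo $\m R_s$ to a finite $\k$-spanning set of $V_s$.

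For tensors, given $s, r \in \Lat(\acleq(A))$, I would form the $\O$-module tensor $T := R_s \otimes_\O R_r$ as an $\O$-submodule of $\K^n \otimes_\K \K^m \cong \K^{nm}$. Flatness of the torsion-free modules $R_s, R_r$ over the valuation ring $\O$ ensures that base change commutes with tensor and gives a canonical isomorphism $V_s \otimes_\k V_r \cong \red(T)$. Since $T$ may fail to be $\m$-avoiding (whenever two cuts sum to $0^+$), I would apply \cref{buildinglat} to replace $T$ with its $\m$-avoiding hull $\overline T \in \Lat(\acleq(A))$ and use the natural inclusion $\red(T) \subseteq V_{\overline t}$ to obtain the desired injection $V_s \otimes V_r \hookrightarrow V_{\overline t}$.

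For duals, the candidate is $t := \code{R_s^\vee}$ with $R_s^\vee = \Hom_\O(R_s, \O) \subseteq \K^n$. Since each $(\O : \I_{c_i})$ has closed cut (as $\Cc_{c_i}$ is bounded below), $R_s^\vee$ is $\m$-avoiding of rank $n$; moreover the identification $R_s^\vee = \overline{R_s}^\vee$ exhibits it as the $\O$-dual of the free hull $\overline R_s$, so the reduced pairing $V_{\overline s} \times V_t \to \k$ is perfect. Composing with the inclusion $V_s \hookrightarrow V_{\overline s}$ yields a canonical surjection $V_t \twoheadrightarrow \dual{V_s}$; the required injection in the opposite direction is extracted by identifying its kernel with $\dual{(V_{\overline s}/V_s)}$ and realizing the quotient $V_{\overline s}/V_s$ as the $\red$ of a direct sum of rank-one $\m$-avoiding modules (one per non-closed $c_i$), which allows an $\acleq(A)$-definable splitting. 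Producing this splitting uniformly in $s$ is the main obstacle.

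For flags, use the canonical filtration $R_s^{(k)} := g \Can_c^{(k)}$, where $g$ is any representative of the coset $s$ and $\Can_c^{(k)} := \sum_{i \leq k} \I_{c_i} e_i$; independence of $g$ follows from \cref{descr stabilizer}, and each $R_s^{(k)} \in \Lat(\acleq(A))$ with purity of $R_s^{(k-1)} \hookrightarrow R_s^{(k)}$ from the triangular structure. Taking the smallest $k$ with $c_k$ closed yields $\dim V_r = 1$ for $V_r := \red(R_s^{(k)})$, and the short exact sequence $0 \to V_r \to V_s \to V_t \to 0$ with $V_t := \red(R_s/R_s^{(k)}) \in \Lin_A$ provides the required flag.
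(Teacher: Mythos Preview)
Your arguments for tensors and flags are correct and essentially match the paper's. The tensor case is identical (form $R_s\otimes_\O R_r$ inside $\K^{nm}$ and apply \cref{buildinglat}); for flags, the paper peels off the first coordinate and inducts on $n$, whereas you jump directly to the first closed index, but both exploit the same triangular filtration and the fact that $R_s/R_s^{(k)}\cong\pi(R_s)$ is again torsion-free and $\m$-avoiding.

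The dual case, however, contains a genuine gap. Your assertion that each $(\O:\I_{c_i})$ has a closed cut is false in general. It holds when the infimum of $\Cc_{c_i}$ is realised in $\Vg$ (i.e.\ when $\Cc_{c_i}=\Vg_{\geq\gamma}$ or $\Vg_{>\gamma}$ for some $\gamma\in\Vg$), but for a gap cut this fails: for instance, with $\Vg=\Qq^2$ ordered lexicographically and $\Cc_{c}=\{(a,b):a>0\}$ the cut just above the convex subgroup $\Delta=\{0\}\times\Qq$, one computes $(\O:\I_{c})=\{x:\v(x)\in\Delta\cup\Cc_c\}$, whose cut again has no minimum. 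Consequently your ``free hull'' $\overline{R_s}$ is undefined, the perfect pairing $V_{\overline s}\times V_t\to\k$ you posit does not exist, and the splitting you yourself flag as ``the main obstacle'' remains unresolved. The justification ``as $\Cc_{c_i}$ is bounded below'' is simply not enough---every cut in $\pCut$ is bounded below.

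The paper's route is much more direct and avoids all of this. It invokes \cref{colon m} to conclude only that $(\O:\I_{c_i})$ is never (a translate of) $\m$---a far weaker statement than ``has closed cut''---so that $\Hom_\O(R_s,\O)$, realised in $\K^n$ via the canonical-basis isomorphism $g$, is already $\m$-avoiding as it stands. No free hull, no perfect pairing, and no splitting are needed; the link between $\dual{V_s}$ and $\red(\Hom_\O(R_s,\O))$ is then the one induced by the natural reduction pairing.
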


\begin{proof}
Let \(R_1\subseteq\K^n\) and \(R_2\subseteq \K^m\) be two
\(\acleq(A)\)-definable \(\m\)-avoiding \(\O\)-modules. Let \(f :
\K^{n}\tensor\K^{n} \to \K^{nm}\) be the \(\L\)-definable isomorphism
induced by the canonical basis. By \cref{buildinglat}, we find an
\(A\)-definable \(\m\)-avoiding \(\O\)-module \(R = \overline{f(R_1\tensor
R_2)}\) inducing an inclusion \(\red(f(R_1\tensor R_2)) \subseteq \red(R)\).
Since \(\red(R_1)\tensor \red(R_2)\) is \(A\)-definably isomorphic to
\(\red(R_1\tensor R_2)\), we conclude that \(\Lin_A\) is closed under tensors.

As for duals, for every \(\O\)-submodule \(I\subseteq \K\), let \(\dual{I} =
(\O:I) = \{x\in\K\mid xI \subseteq \O\}\). By \cref{colon m},
\(\dual{I}\neq\m\). Let \(g : \dual{(\K^n)}\to\K^n\) be the
\(\L\)-definable isomorphism induced by the canonical basis composed with
the map \(\K^n\to\K^n\) reversing the order of the coordinates and let
\((\dual{a_i})_i\) denote the dual basis. Then, as we reversed the order,
\(g(\dual{a_i})\) is an upper triangular basis. If \(R = \sum_i \I_{c_i} a_i\)
for some triangular basis \((a_i)_i\), then \(g(\Hom_\O(R_1,\O)) = \sum_i
\dual{\I_{c_i}}g(\dual{a_i})\), which is \(\m\)-avoiding. This induces an
\(A\)-definable isomorphism \(\red(\Hom_\O(R_1,\O)) \isom
\red(g(\Hom_\O(R_1,\O)))\) showing that \(\Lin_A\) is closed under duals.

Finally, regarding flags, let \(R = \sum_i \I_{c_i} a_i\subset \K^n\) be an
\(\acleq(A)\)-definable \(\m\)-avoiding \(\O\)-module, with \(a\) triangular. We
find a flag for \(\red(R)\) by induction on \(n\). Let \(\pi\) be the projection
on the last \(n-1\) variables. Then, we have a split \(A\)-definable short exact
sequence
\[0 \to \I_{c_0}a_0 \to R \to \pi(R) \to 0\]
which induces the following \(A\)-definable short exact sequence
\[0 \to \I_{c_0}a_0/\m \I_{c_0}a_0 \to R/\m R \to \pi(R)/\m\pi(R) \to 0.\]
If \(\I_{c_0} = \m \I_{c_0}\), then \(\red(R) \isom \red(\pi(R))\) and we conclude by
induction on \(n\). If not, \(\I_{c_0}a_0/\m \I_{c_0}a_0\) is a dimension one
\(\k\)-vector space, and the above short exact sequence is a flag for
\(\red(R)\).
\end{proof}

\begin{remark}
\label{Linproperties}
Fix some \(A\subseteq \eq{M}\).
\begin{enumerate}
\item Recall that we assumed that \(\k\) is stably embedded in \(M\). It follows
that $\Lin_{A}$ is stably embedded and its \(A\)-induced structure is definable
with parameters in the structure with the \(\L\)-induced structure on
\(\k\) and the vector space structure on each sort \(R_s/\m R_s\). Indeed, once
we name a basis of every vector space in \(\Lin_A\), every definable subset in
\(\Lin_A\) can be identified with a definable subset in \(\k\). Similarly, if
\(\RV\) is stably embedded, so is \(\Lin_A\cup\RV\).
\item Whenever $\k$ is a pure algebraically closed field, combining \cite[Lemma
5.6]{Hru-Groupoid} with \cref{Lin closed}, \(\Lin_A\), with its \(A\)-induced
structure, eliminates imaginaries.
\end{enumerate}
\end{remark}

We can now improve \cref{code mod}:

\begin{corollary}
\label{code mod G}
Any \(M\)-definable \(\O\)-submodule \(R\) of type \(c\in\pCut\) is coded in
\(\G\).
\end{corollary}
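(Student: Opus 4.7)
The plan is to combine \cref{buildinglat} with the abstract coding result \cref{code subspace}: first pass to an \(\m\)-avoiding enveloppe of \(R\), which lives in \(\Lat\) by definition, and then code the additional datum \(\red(R)\subseteq \red(\overline{R})\) as a definable subspace of a \(\k\)-vector space in \(\Lin_A\), which is itself a subcollection of \(\Tor\).

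More precisely, here is the sequence of steps I would carry out. \emph{Step 1.} Apply \cref{buildinglat} to the module \(R\) of type \(c\in\pCut\) to produce a \(\code{R}\)-definable \(\m\)-avoiding module \(\overline{R}\) containing \(R\), with \(\m R = \m\overline{R}\), inducing an inclusion \(\red(R)\subseteq \red(\overline{R})\) of \(\k\)-vector spaces, and satisfying
\[\dcleq(\code{R}) = \dcleq(\code{\overline{R}},\code{\red(R)}).\]
Since the type of \(\overline{R}\) is a tuple in \(\aCut\), we have \(\code{\overline{R}}\in\Lat\subseteq\G\) directly. So the whole problem reduces to coding the subspace \(\red(R)\) of \(\red(\overline{R})\) in \(\G\) over \(\code{\overline{R}}\).

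\emph{Step 2.} Set \(A = \{\code{\overline{R}}\}\). By \cref{Lin closed}, the collection \(\Lin_A = \bigsqcup_{s\in\Lat(\acleq(A))}\red(R_s)\) is a family of finite-dimensional \(\k\)-vector spaces closed under tensors and duals and having flags, so the hypotheses of \cref{code subspace} are met. The subspace \(\red(R)\subseteq \red(\overline{R})\) is \(A\)-definable (as \(\red(\overline{R})\in \Lin_A\) and \(\red(R)\) is the image of \(R\), which is \(\acleq(A,\code{R})\)-definable). Applying \cref{code subspace} yields a code for \(\red(R)\) inside \(\bigcup_{s\in\Lat(\acleq(A))}\red(R_s)\subseteq\Tor\), over the parameter \(\code{\overline{R}}\).

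\emph{Step 3.} Putting the two pieces together,
\[\code{R}\subseteq \dcleq\bigl(\code{\overline{R}},\code{\red(R)}\bigr)\subseteq\dcleq(\Lat\cup\Tor)\subseteq\dcleq(\G),\]
which is exactly what it means for \(R\) to be coded in \(\G\). There is no real obstacle here: the nontrivial work has been absorbed into \cref{buildinglat} (to pass from \(\pCut\) to \(\aCut\)) and \cref{code subspace,Lin closed} (to promote the residue subspace to a geometric code); the corollary is essentially the formal assembly of these facts.
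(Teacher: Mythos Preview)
Your proof is correct and follows the same route as the paper: apply \cref{buildinglat} to pass to the \(\m\)-avoiding envelope \(\overline{R}\in\Lat\), then invoke \cref{Lin closed} and \cref{code subspace} to code the subspace \(\red(R)\subseteq\red(\overline{R})\) in \(\Tor\). One small slip: in Step~2 you write that \(\red(R)\) is \(A\)-definable for \(A=\{\code{\overline{R}}\}\), but of course it is only \(\code{R}\)-definable (as your own parenthetical remark concedes); this is harmless since \cref{code subspace} applies to \emph{any} definable subspace, not just an \(A\)-definable one.
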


Conversely, any element \(a + \m s \in \Tor\) is coded by the \(\O\)-submodule
generated by \((a + \m s)\times \{1\}\). So any element of \(\G\) is coded in
\(\K\cup\Mod\).

\begin{proof}
By \cref{buildinglat}, for some \(\m\)-avoiding module \(\overline{R}\)
containing \(R\) such that \(\m R = \m\overline{R}\), the lattice \(R\) is coded
by \(\code{\overline{R}} \in \Lat\) and \(\code{\red(R)}\) where \(\red(R)\leq
\red(\overline{R})\) is a subspace. By \cref{code subspace,Lin closed},
\(\red(R)\) is coded in \(\Lin_{\code{R}} \subseteq \Tor\).
\end{proof}

One of the main reason for isolating the \(\m\)-avoiding modules is the
following result.

\begin{proposition}
\label{orth Vg}
Let \(M\) be an \(\RV\)-expansion of a model of \(\Hen[0,0]\) with stably
embedded dense value group. If \(X\subseteq \Lat\) is \(M\)-definable and
orthogonal to \(\Vg\) --- meaning that any function \(f : X \to \eq{\Vg}\)
definable with parameters has finite image --- then \(X\) is finite. 
\end{proposition}

\begin{proof}
Let us first consider the case of some \(M\)-definable \(X\subseteq \K/\I_{c}\)
for some \(c\in\aCut\). Let \(Y\subseteq\K\) be the pre-image of \(X\). By
\cref{Hen prep}, there exists a (non-empty) finite set \(C\subseteq\K(M)\) such
that, any ball \(b\) disjoint from \(C\), is either contained in \(Y\) or is
disjoint from it. If \(X\) is infinite, then there exists some \(a\in Y\) such
that \(a+\I_c\) is disjoint from \(C\). Let \(b\) be the maximal ball around
\(a\) that is disjoint from \(C\) --- \emph{i.e.} the ball \(a + (a-c)\m\) where
\(v(a-c) = \min_{c\in C} \v(a-c)\). Then \(b\subseteq Y\) and \(b\) strictly
contains \(b\) as \(\I_{c}\neq d\m\), for any \(d\). It follows that the
function \(f : x\mapsto \v(x-a)\) induces a well-defined function on
\((b\setminus \{a+\I_c\})/\I_c \subseteq X\) with infinite image, contradicting
our hypothesis.

Let us now fix some \(e\in X \subseteq\Lat\), in some elementary extension of
\(M\). Let \(b_\ell\) be as in \cref{decomp solv} and let us prove, by induction
on \(\ell\), that \(b_\ell \in \eq{M}\). For all \(\ell < n\), we have
\(b_\ell\in \Vg/\Delta_\ell\), for some convex subgroup \(\Delta_\ell\). Since
\(e \in X\) and \(X\) is orthogonal to \(\Vg\) and \(b_\ell\in \dcleq(e)\), we
must have \(b_\ell\in \eq{M}\). Let now \(\ell \geq n\) and let us assume that
\(b_{<\ell} \in M\). We have \(b_\ell\in X_\ell\) which is an \(M\)-definable
torsor for some \(\K/\I_\ell\) where, by \cref{colon m}, \(\I_\ell\) is not a
multiple of \(\m\). Since \(b_i\in\dcleq(e)\), it is contained is an
\(M\)-definable subset of \(\K/\I_\ell\) which is orthogonal to \(\Vg\), and
hence, by the first paragraph, finite. So \(b_\ell\in \eq{M}\).

Since \(e\in\dcleq(M b)\), it follows that \(e\in \eq{M}\). As this holds for
any \(e\in X\) in some elementary extension, \(X\) is finite.
\end{proof}

Recall that a definable set \(X\) is (resp. almost) internal to another
definable set \(Y\) if, over a model, \(X\) admits a one-to-one (resp.
finite-to-one) map to \(\eq{Y}\).

\begin{corollary}
\label{almost k int}
Let \(M\) be an \(\RV\)-expansion of a model of \(\Hen[0,0]\) with dense value
group. Assume that \(\k\) and \(\Vg\) are stably embedded and orthogonal. Let
\(A=\acleq(A)\subseteq\eq{M}\) and let \(X\) be an almost \(\k\)-internal
\(A\)-definable subset of (some cartesian product of) \(\G\), then
\(X\subseteq \K(A)\cup\Lat(A)\cup\Lin_A\).
\end{corollary}

\begin{proof}
Any almost \(\k\)-internal set is orthogonal to \(\Vg\). By \cref{Hen prep}, any
infinite definable subset of \(\K\) contains a ball and hence is not orthogonal
to \(\Vg\), so if \(X\subseteq\K\), then \(X\subseteq\K(A)\). By \cref{orth Vg},
if \(X\subseteq\Lat\), then \(X\subseteq\Lat(A)\). Finally, if \(X\subseteq
\Tor\), then the projection of \(X\) to \(\Lat\) is finite and hence \(X\subset
\coprod_{s\in \Lat(A)}\red(R_s) = \Lin_A\).
\end{proof}

Note that it is necessary to assume that \(\Vg(M)\) is dense. The pure valued
field \(\mathbb{C}((t))\) eliminates imaginaries down to \(\G\). However, the
interpretable set \(\O/t^2\O\) is orthogonal to \(\Vg\) (in fact, it is
\(\k\)-analyzable), but it is not \(\k\)-internal; in particular, it cannot be a
subset of \(\K(A)\cup\Lat(A)\cup\Lin_A\).

\section{Density of quantifier free definable types}\label{densitysec}

Let \(M\) be a sufficiently saturated and homogeneous \(\RV\)-expansion of a
model of \(\Hen[0,0]\) with strongly stably embedded \(\Vg\) (\emph{cf}.
\cref{Vg k structure}). We realize types over \(\MM\) in some a sufficiently
saturated and homogeneous \(\MM\succ M\). Let \(M_1 = \ur{M}\) be its maximal
algebraic unramified extension (with the \(\L\)-induced structure on \(\Vg\))
and \(M_0 = \alg{M}\) be its algebraic closure (as a pure valued field). Note
that, by quantifier elimination, we have \(M_0 \prec \alg{\MM}\) and \(M_1\prec
\ur{\MM}\).

In what follows, whenever we want to refer to the structure in \(M_0 = \alg{M}\)
(resp. \(M_1 = \ur{M}\), resp. \(M\)), we will indicate this by a \(0\) (resp.
\(1\), resp. nothing): \emph{e.g.} \(\acl[0]\), \(\acl[1]\) or \(\acl\) for the
algebraic closure and \(\Tp^0(M)\) for the space of types in \(M_0\) over \(M\).

We also assume that the language \(\L_i\) of \(M_i\) is morleyized, and we
restrict ourselves to quantifier free \(\L_i\)-formulas when interpreting them
in a substructure. Note that it follows from quantifier elimination in \(M_1\)
(\cref{EQ Vg}) that the trace on \(M\) of any \(\L_1\)-definable set in \(M_1\)
is \(\L\)-definable. Similarly, for \(\L_0\)-definable sets. We also fix a
sufficiently saturated extension of the triple \((M_0,M_1,M)\) in which we
realize types.

The goal of this section is to prove the following density result:

\begin{theorem}
\label{density}
Assume that \(\Vg(M)\) satisfies Property \D. Let \(A = \acleq(A)\subseteq
\eq{M}\) and \(X\subseteq\K^n\) be non-empty and \(\L(A)\)-definable in \(M\).
Then there exists an \(\L(\G(A)\cup \eq{\Vg}(A))\)-definable type \(p \in
\Tp^1(M)\) consistent with \(X\).
\end{theorem}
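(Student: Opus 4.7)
The plan is to proceed by induction on $n$, building an $\L$-definable type consistent with $X$ over $M$ whose defining schema for $\L_1$-formulas uses only parameters in $\G(A) \cup \eq{\Vg}(A)$, and then extending its $\L_1$-restriction to a complete $\L_1$-definable type over $M_1$.

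For the unary case $n = 1$, I would first apply \cref{Hen prep} to find a finite $C \subseteq \alg{A} \cap \K(M)$ preparing $X$, so that $X$ is a union of $\rv_C$-fibres. \cref{EQ RV} then expresses this in terms of the short exact sequence $1 \to \k^\star \to \RV^\star \to \Vg \to 0$, and projecting to $\Vg$ yields a non-empty $(A,\v(C))$-definable set $Z \subseteq \Vg^k$ into which any candidate point of $X$ must map via valuations of differences with $C$. Applying Property \D to $Z$ produces an $\eq{\Vg}(A)$-definable complete $\Vg$-type $q_\Vg$ consistent with $Z$; lifting $q_\Vg$ through $\rv$ and choosing residues generically in $\k$ gives an $\L$-definable type $q$ consistent with $X$. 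In the unramified extension $M_1$, whose residue field is algebraically closed, \cref{EQ Vg} guarantees that the $\L_1$-type of a generic element of $\K$ over $M_1$ is controlled entirely by valuations of polynomials, whose centres come from $\K(\acl(A))$ and are hence coded in $\G(A)$. Consequently the $\L_1$-restriction of $q$ extends uniquely to the desired $p \in \Tp^1(M_1)$, definable over $\G(A) \cup \eq{\Vg}(A)$.

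For the inductive step, I would write the variables as $(y,z)$, take $p_y$ for $y$ by the unary case, pass to a realisation $y_0$, and apply induction to the fibre $X_{y_0}$ over $A \cup \{y_0\}$. Tensoring $p_y$ with the resulting $p_z$ gives a candidate type. The delicate point is to descend the parameters of $p_z$ --- a priori living in $\G(A y_0) \cup \eq{\Vg}(A y_0)$ --- back to $\G(A) \cup \eq{\Vg}(A)$. This amounts to asserting that the \emph{germs} along $p_y$ of $\L(A)$-definable families of subsets of $\K^{n-1}$ coincide with germs of $\L_1$-definable families over $\G(A) \cup \eq{\Vg}(A)$.

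The main obstacle is precisely this descent of germs, as anticipated in the overview. It brings together \cref{Hen prep} (to normalise any $\L$-definable family through $\rv$-data), \cref{EQ RV} (to funnel that data through the short exact sequence), the orthogonality of $\k$ and $\Vg$ (\cref{acl Vg}), and Property \D (to secure enough invariance of the $\Vg$-component). Once this germ computation is available, the inductive assembly of the definable type is routine.
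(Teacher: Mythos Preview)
Your inductive skeleton matches the paper's, and you have correctly located the main obstacle in the inductive step: controlling the parameters that arise when you pass to a fibre. However, both the unary case and the resolution of that obstacle need more than you have sketched.

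For the unary case, the paper does \emph{not} apply Property~\D directly to a $\Vg$-projection of $X$. Instead it runs a tree argument on balls ordered by how they meet $X$: using \cref{Hen prep} it locates an $A$-algebraic generalized ball $b_E$ whose $\L_0$-generic $\eta_E$ is already consistent with $X$, and only \emph{then} invokes Property~\D (\cref{compl L1}) to complete $\eta_E$ to a full $\L_1$-type over $M_1$. Your plan to lift a $\Vg$-type through $\rv$ by ``choosing residues generically'' is not obviously well-posed: the preparation set $C$ lies in $\alg{A}$, not $A$, and a complete $\Vg^k$-type on the tuple $(\v(x-c))_{c\in C}$ does not straightforwardly determine a single $\K$-type consistent with $X$.

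For the inductive step, the paper does not literally descend the parameters of $p_z$ from $\G(Ay_0)\cup\eq{\Vg}(Ay_0)$ back to $\G(A)\cup\eq{\Vg}(A)$. Rather, it \emph{enriches} the realization $y_0$ to a tuple $c$ enumerating all of $\G(\acleq(Ay_0))\cup\eq{\Vg}(\acleq(Ay_0))$ and proves (\cref{def acl}) that $\tp_1(c/M)$ is still $\L(A)$-definable; induction is then applied over $c$, and the final type is the integral $\int_{q(y)} r_y(z)$ rather than a tensor. The engine behind \cref{def acl} is \cref{ind:def acl}, and it needs more than the four tools you list: one uses \cref{points balls} to place algebraic points inside non-open generalized balls, handles open balls via $\RV$ once closed-ball centres are available, treats $\k$ separately via the generic of $\O$, and finally invokes \cref{lift G} to conclude that $\G(\acleq(Ay_0))\subseteq\dcleq[1](c)$ once enough $\Vg/\Delta$- and ball-data have been absorbed into $c$. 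Your sketch names the right ingredients for $\Vg$-germs but does not account for this case analysis, without which the definability of the integrated type over $\G(A)\cup\eq{\Vg}(A)$ does not follow.
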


This statement was proved in \cite[Theorem~5.9]{Vic-EIACk} when \(M = \ur{M}\)
and \(\Vg\) is an abelian ordered group of bounded regular rank. It also
strengthens \cite[Theorem~3.1.3]{HilRid-EIAKE} in two ways. The first is that it
provides a definable type in a stronger reduct (\emph{i.e.} a type in \(\ur{M}\)
and not just in \(\alg{M}\)). The second is that there is no hypothesis on the
residue field: it is not required that the residue field eliminates
\(\exists^\infty\).

\subsection{Codes of definable types}

We start by proving the following useful fact allowing to compare types in
\(M\), \(M_1 = \ur{M}\) and \(M_0 = \alg{M}\).

\begin{remark}\label{identification} Let $c=(c_{1},\dots, c_{n})$ be a finite
tuple in $\Cut^{**}$. Consider the natural map:
\begin{align*}
\B_{n}(M)/ \Stab(\Can_{c})(M) &\to \B_{n}(M_{1})/ \Stab(\Can_{c})(M_{1}).
\end{align*}

So the map above identifies the code of the $\mathcal{O}(M)$-module
$R=\sum_{i\leq n}a_{i}I_{c_{i}}(M)$ in $M$ with the code of the
$\mathcal{O}(M_{1})$-module \(R_1 = \sum_{i \leq n} a_{i} I_{c_{i}}(M_{1}) =
\sum_{i \leq n} a_{i} \O(M_1) \cdot I_{c_{i}}(M)\) generated by \(R\). Moreover,
we have \(R_1(M) = R\).
\end{remark}

Recall the \cref{def Delta type} of a definable \(\Delta\)-type.

\begin{proposition}
\label{code type}
Let \(A = \dcleq(A)\subseteq \eq{M}\) and \(p(x)\in\TP_{\K^{|x|}}^\epsilon(M)\)
be finitely satisfiable in \(M\) and \(\L(A)\)-definable, for \(\epsilon = 0\)
or \(1\). Then \(p\) has a unique extension
\(q_\epsilon\in\TP^\epsilon(M_\epsilon)\). Moreover, \(q_1\) is \(\L_1(\G(A)\cup
\eq{\Vg}(A))\)-definable and \(\restr{q_0}{M_1}\) is \(\L_1(\G(A))\)-definable.
\end{proposition}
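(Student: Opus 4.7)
The plan has three parts: construct $q_\epsilon$ as the q.f.\ $\L_\epsilon$-type of a realization of $p$ in a saturated extension; prove well-definedness via finite satisfiability of $p$ combined with the algebraicity of $M_\epsilon/M$; and reduce the defining parameters to the claimed sorts using the coding results of \cref{sec:code mod} together with \cref{identification}.

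For existence, let $a$ realize $p$ in a sufficiently saturated $N \supseteq M_\epsilon$ and set $q_\epsilon := \tp^\epsilon(a/M_\epsilon)$; this extends $p$ by construction. For uniqueness, given another realization $a'$ of $p$ in $N$, I would show $\tp^\epsilon(a/M_\epsilon) = \tp^\epsilon(a'/M_\epsilon)$ formula by formula. For any $b \in \K(M_\epsilon)$, its $\Aut(M_\epsilon/M)$-orbit $C \subseteq M_\epsilon$ is finite and $\L(M)$-definable. Any q.f.\ $\L_\epsilon$-formula $\phi(x,b)$ corresponds via Morleyization to an $\L$-formula, whose satisfaction at $a$ is determined by $p$ and the $\L(M)$-code of $C$ (via \cref{points balls,identification} and finite satisfiability of $p$ in $M$); hence it agrees at $a'$. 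Iterating over tuples yields $q_\epsilon = q'_\epsilon$.

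For the definability step, the defining scheme of $q_\epsilon$ inherits $\L(A)$-definability from $p$; the task is to rewrite it over the claimed parameters. \Cref{code mod G} codes any $A$-definable $\O$-submodule of $\K^n$ in $\G(A)$; \cref{acl Vg} codes any $A$-definable subset of $\Vg^n$ in $\eq{\Vg}(A)$; \cref{identification} ensures these codes transfer coherently between $M$ and $M_\epsilon$. For $q_1$, the $\L_1$-structure carries the full $\Vg$-structure induced from $M$ on $\Vg(M_1) = \Vg(M)$, so $\Vg$-content requires $\eq{\Vg}(A)$; combined with $\G(A)$ for field and $\O$-module content, this yields $\L_1(\G(A) \cup \eq{\Vg}(A))$-definability. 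For $\restr{q_0}{M_1}$, the language $\L_0$ sees only the pure valued field structure on $M_0$, whose value group is divisible, so $\Vg$-content reduces to pure OAG conditions that can be absorbed into $\G(A)$-data, and $\eq{\Vg}(A)$ becomes unnecessary. The main obstacle is this last parameter reduction for $\restr{q_0}{M_1}$: rewriting the $\L(A)$-defining scheme in $\L_1(\G(A))$ alone requires careful use of the divisibility of $\Vg(M_0)$ and the module-code transfer of \cref{identification} to absorb all $\Vg$-imaginaries into the geometric sorts.
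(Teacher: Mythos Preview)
Your existence and uniqueness sketch is reasonable in spirit (though the citation of \cref{points balls} for uniqueness is a non sequitur; the paper simply cites a lemma from \cite{HilRid-EIAKE} using finite satisfiability). The real gap is in the definability step.

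You write that ``the defining scheme of $q_\epsilon$ inherits $\L(A)$-definability from $p$''. This is not true as stated and is the crux of the matter: $q_\epsilon$ is a type over $M_\epsilon$, not over $M$, and its defining scheme must be computed in $\L_\epsilon$ over $M_\epsilon$. Knowing that $p$ is $\L(A)$-definable only gives you $\Aut(M_\epsilon/A)$-invariance of $q_\epsilon$, not a defining scheme, and certainly not one supported on $\G(A)$ or $\eq{\Vg}(A)$. Your subsequent plan --- invoke \cref{code mod G} to code ``the $\O$-modules that appear'' and \cref{acl Vg} for ``the $\Vg$-content'' --- never says \emph{which} modules or \emph{which} $\Vg$-sets arise, so there is nothing concrete to code.

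The paper supplies exactly this missing identification. For each degree $d$ it equips the polynomial space $V_d = \K[x]_{\leq d}$ with the $\L(A)$-definable valuation $v(P)\leq v(Q)\Leftrightarrow p\vdash \val(P(x))\leq\val(Q(x))$, takes a separated triangular basis $(P_i)$ for $v$ with $v(P_i)\in A$, and observes that the same basis remains separated for $V_d(M_1)$. The modules $R_i = \{P : v(P)\geq v(P_i)\}$ then completely determine $v_1$ and hence $\restr{q_0}{M_1}$ (since the $\L_0$-type over an algebraically closed field is determined by valuative comparisons of polynomials), so $\G(A)$ alone suffices via \cref{code mod G} and \cref{identification}. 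For $q_1$, \cref{EQ Vg} reduces every $\L_1$-formula to a $\Vg$-condition on $\v(P(x,y))$, and since $v_1(V_d^1)$ is a finite union of $\Vg(M)$-translates indexed by $\G(A)$, the residual $\Vg$-definable sets supply the $\eq{\Vg}(A)$ parameters. Your proposal has no analogue of this polynomial-valuation device, and in particular your attempt to ``absorb $\Vg$-imaginaries into $\G(A)$'' for $\restr{q_0}{M_1}$ via divisibility of $\Vg(M_0)$ is not an argument: the point is not that $\Vg(M_0)$ is simple, but that the $\L_0$-type is already determined by the module data before any $\Vg$-imaginary enters.
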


We follow the proof of \cite[Theorem~5.9]{Vic-EIACk},
\emph{mutatis mutandis}.

\begin{proof} 
The uniqueness of \(q_\epsilon\) follows from \cite[Lemma~3.3.7]{HilRid-EIAKE};
note that \(q_\epsilon\) is finitely satisfiable in \(M\). 

For every integer \(d\geq 0\), let $V_{d} \simeq \K^\ell$ be the space of
polynomials in $\K(M)[x]_{\leq d}$ of degree less or equal than $d$ (in each
\(x_i\)). It comes with an \(\L(A)\)-definable valuation defined by \(v(P)\leq
v(Q)\) if \(p(x)\vdash \val(P(x))\leq\val(Q(x))\).

By \cref{uppertrian} there exists a separated basis \((P_i)_{i\leq \ell}\in
V_d\) such that, for every \(i\), \(\gamma_i  = v(P_i) \in \dcl(\code{v})
\subseteq A\). By \cite[Claim~3.3.5]{HilRid-EIAKE}, \((P_i)_{i}\) is also a
separated basis of \(V_d^1 = \K(M_1)[x]_{\leq d}\) with the valuation $v_{1}$
where \(v_1(P)\leq v_{1}(Q)\) if \(q_1(x)\vdash \val(P(x))\leq\val(Q(x))\). It
follows that \(v_1(V_d^1) = v(V_d) = \bigcup_{i} \gamma_i + \Vg(M)\) which we
identify in \(\eq{\Vg}\) over \(\G(A)\) with disjoint copies of \(\Vg\).

By \cref{code mod} the definable \(\O\)-modules \(R_i = \{P \in V_d \mid
v(P)\geq \gamma_i\}\) are coded by some tuple \(e_i\) in \(\Mod(A)\). We
identify \(e_i\) with the code \(e^1_i \in \Mod^1(M)\) of \(R^1_i = \{P \in
V^1_d \mid v_1(P)\geq \gamma_i\}\) via the map in \cref{identification}.
Furthermore, the \(R_i^1\) entirely determine \(v_1\) which is therefore coded
in \(\G(A)\) (\emph{cf.} \cref{code mod G}). Since \(\restr{q_0}{M_{1}}\) is
entirely determined by the valuations \(v_1\) on \(V_d^1\), the proposition is
proved in that case.

To conclude, let us prove the definability of \(q_1\). By \cref{EQ Vg}, any
\(\L_1\)-formula \(\phi(x,y)\) (with variables in \(\K\)) is equivalent to one
of the form \(\psi(\v(P(x,y)))\), where \(P\in \Zz[x,y]\) is a tuple. Let
\(X_\phi = \{v_1(P(x,a))\in v_1(V_d^1)\mid q_1(x)\vdash \psi(\v(P(x,a)))\}\).
Note that if \(v_1(P(x,b)) = v_1(P(x,a))\), then \(q_1(x)\vdash \v(P(x,b)) =
\v(P(x,a))\) and hence \(q_1(x)\vdash \psi(\v(P(x,b)))\) if and only if
\(v_1(P(x,b)) \in X_\phi\). So it suffices to show that the \(X_\phi\) are
\(\L_1(\G(A)\cup \eq{\Vg}(A))\)-definable, but this follows immediately from the
identification \(v_1(V_d^1) = \bigcup_{i} \gamma_i + \Vg(M)\).
\end{proof}

From now on, we will identify \(\L(A)\)-definable types in \(\TP^1(M)\) that are
finitely satisfiable in \(M\) with their unique extension to \(M_1 = \ur{M}\).

We now note that coding definable \(\L_0\)-types already allows us to code some
imaginaries, namely certain germs of functions into the space of balls ---
see \cref{imaginary} for the definition of germs.

\begin{definition}
Let \(B\) be a chain of balls in \(M_0\) (including points and \(\K\) itself).
We define the generic \(\eta_B\) of the generalized ball \(\bigcap_{b\in B} b\),
to be the \(\L_0(M_0)\)-type generated by:
\[\{x\in b\mid b\in B\}\cup \{x\nin b'\mid b'\text{ is a ball in }M_0\text{ and
}\forall b\in B,\ b'\text{ is a strict subball of }b\}.\]
\end{definition}

If \(b\) is a generalized ball, we write \(\eta_b\) for the type \(\eta_B\)
where \(B\) is the set of balls containing \(b\). If \(b\) is a definable
generalized ball in \(M\), \(\restr{\eta_b}{M}(x)\) is an
\(\L(\code{b})\)-definable type (finitely) consistent with \(x\in b\).

\begin{lemma}
\label{code germs balls}
Assume that \(\Vg(M)\) is dense. Let \(a\in\K(N)\), for some \(N\succ M\), be
such that \(p_0(x) = \tp_0(a/M)\) is \(\L(M)\)-definable. Let \(b(a)\) be an
\(\L_0(Ma)\)-definable open ball whose radius is in \(\Vg(N)\) --- where \(b\)
implicitly denotes some \(\L_0(M)\)-definable map. Then the germ
\(\germ{p_0}{b}\) is coded in \(\G(M)\) over \(\G(\code{p_0})\).
\end{lemma}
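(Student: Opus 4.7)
My approach is to decompose the open ball $b(a)$ into a radius (in $\Vg$) and a center (in $\K$), and code each piece separately, leveraging \cref{code type} and the stable embeddedness of $\Vg$.

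Since $b(a)$ is an $\L_0(Ma)$-definable open ball and $\L_0$ is the pure valued field language (with single sort $\K$), we can write $b(a) = B_{>\gamma(a)}(c(a))$ for $\L_0(M)$-definable functions $\gamma : \K \to \Vg$ and $c : \K \to \K$ whose parameters lie in $\K(M) \subseteq \G(M)$. Setting $g := \germ{p}{b}$, observe that both the radius germ $[\gamma]_{p_0}$ and the center data modulo the radius lie in $\dcl^{eq}(g)$, as the radius and the ball up to its radius can be read off directly from the generic ball.

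For the radius, the pushforward $\gamma_*(p_0)$ is an $\L(M)$-definable complete type on $\Vg$. By stable embeddedness of $\Vg$ (\cref{acl Vg}) it is $\L(\Vg(M))$-definable, and by density of $\Vg(M)$ every complete such type corresponds bijectively to a cut $C \in \Cut(M)$, which is itself coded by the $\O$-submodule $\I_C \in \Lat(M) \subseteq \G(M)$. In the dense setting, two elements in an extension realizing the same cut over $\Vg(M)$ have the same $\L(M)$-type, so the germ $[\gamma]_{p_0}$ is captured (as an $\eq{M}$-element) together with $\code{p}$ by $C$. Hence $\I_C \in \G(M) \cap \dcl^{eq}(g)$.

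For the center, I would consider the coset $c(a^*) + \I_C \in \K/\I_C$ for $a^* \models p_0$ in the coarser quotient (which is coded in $\G$ by \cref{code mod G}). By \cref{code type}, the code $\code{p}$ is captured by $\G(\code{p})$ parameters, so the $p_0$-generic element of $\K/\I_C$ arising from $c$ is coded in $\G(M)$ over $\G(\code{p})$. The finer data needed to recover the exact ball $b(a^*) = c(a^*) + \m_{\gamma(a^*)}$ inside $c(a^*) + \I_C$ is controlled by the germ of $\gamma$, which we already handled. Assembling the radius module $\I_C$, the center coset, and $\G(\code{p})$, one recovers $g$, establishing the coding.

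The main obstacle is the center analysis when $C$ is non-principal and $\gamma(a^*) \notin \Vg(M)$: the modulus $\m_{\gamma(a^*)}$ then depends on the specific realization, so one cannot work inside a fixed quotient $\K/\m_{\gamma_0}$. The strategy is to first pass to the coarser $\K/\I_C$ (independent of the specific $a^*$ and coded in $\G$) and then use \cref{code type} to transfer the remaining finer structure into $\G(\code{p})$, ensuring the entire code lives in $\G(M)$ over $\G(\code{p})$ rather than in some non-geometric imaginary sort.
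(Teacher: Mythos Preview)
Your decomposition into radius and center has a genuine gap. First, you assert that $b(a) = B_{>\gamma(a)}(c(a))$ for an $\L_0(M)$-definable function $c : \K \to \K$; but an open ball in $\ACVF$ need not contain an $Ma$-\emph{definable} point of $\K$, so no such $c$ exists in general (one can only pick a center in $\acl_0(Ma)$, which already breaks the clean picture). Second, and more seriously, even granting some center data, the step where you code ``the $p_0$-generic element of $\K/\I_C$ arising from $c$'' in $\G(M)$ is precisely the difficult content of the lemma and you do not justify it: the germ along $p_0$ of a map into a quotient $\K/\I$ is an imaginary with no a~priori reason to lie in $\G$, and \cref{code type} only codes complete types in powers of $\K$, not such germs. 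Your ``finer data'' paragraph is too vague to close this gap, and your invocation of \cref{code mod G} codes the module $\I_C$, not the germ of a map into $\K/\I_C$.

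The paper's argument sidesteps both problems by never choosing a center. It forms the $\L_0(M)$-type $q(x,y) = \int_{p(x)} \eta_{b(x)}(y)$, whose realizations are pairs $(a,c)$ with $a \models p$ and $c$ generic in $b(a)$ over $Ma$. The density of $\Vg$ is used precisely here: the generic of an \emph{open} ball in $\alg{N}$ is finitely satisfiable in $N$, so $\restr{q}{M}$ is finitely satisfiable in $M$ and \cref{code type} applies directly to $q$ as a type in $\K^{|x|+1}$. One then checks that over $\code{p}$ an automorphism fixes $q$ iff it fixes $\germ{p}{b}$, whence $\germ{p}{b}$ is coded by $\G(\code{q})$ over $\G(\code{p})$.
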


\begin{proof}
We may assume that \(\alg{N}\) is \(|M|^+\)-saturated. Let \(q_0(x,y)\) be the
\(\L(M)\)-definable \(\L_0(M)\)-type of \(ac\) where \(c\) is generic in
\(b(a)\) over \(Ma\) --- that is \(c\models \restr{\eta_b}{Ma}\). Note that,
since we are in equicharacteristic zero, \(b(a)\) has a point in \(N\) and, in
fact, since \(b(a)\) is open and \(\Vg(N)\) is dense, the generic of \(b(a)\) in
\(\alg{N}\) is satisfiable in \(N\). It follows that \(q_0\) is finitely
satisfiable in \(M\).

By \cref{code type}, \(p_0\) and \(q_0\) are coded in \(\G(M)\). Moreover, for
every \(\sigma\in\aut(M/\code{p})\) and \(ac \models q_0\), we have
\(\sigma(q_0) = q_0\) if and only if \(b(a) = b^\sigma(a)\). So \(\germ{p}{b}\)
is coded by \(\G(\code{q_0})\) over \(\G(\code{p_0})\).
\end{proof}

\subsection{Unary subsets of \texorpdfstring{\(\K\)}{K}}

We first consider the case of \cref{density} when \(X\subseteq\K\). The proof
proceeds as in \cite[Theorem~5.3]{Vic-EIACk} where, in the unary case, the
hypothesis that \(M = \ur{M}\) is not used.

\begin{lemma}
Let \(A = \acleq(A)\subseteq M\) and \(X\subseteq\K\) be \(\L(A)\)-definable in
\(M\). There exists an \(\L_1(\gBalls(A))\)-definable chain of balls \(E\) in
\(M_1\) such that the \(\L_1(\gBalls(A))\)-definable type \(\restr{\eta_E}{M_1}
\in \Tp^0(M_1)\) is consistent with \(X\).
\end{lemma}

This follows from \cite[Section 3]{HilRid-EIAKE} which states a relative version
of that statement. However, since the machinery set up for the relative version
of the statement is rather heavy, let us sketch a proof. A version of this proof
can also be found in \cite[Theorem 5.5]{Vic-EIACk}.

\begin{proof}
Let \(\Balls(M_1)\) be the set of all open and closed balls (including points
and \(\K\) itself) in \(M_1\). Given $b_{1},b_{2} \in \Balls$  we write \(b_1
\treeq b_2\) if \(b_1\cap X \subseteq b_2\cap X\). This is a pre-order with
associated equivalence \(\equiv\) and the associated order is a tree \(\cT\) if
we remove the class \(E_\emptyset\) of balls that don't intersect \(X\).

Note that any \(\equiv\)-class \(E\neq E_\emptyset\), the generalized ball \(b_E
= \bigcap_{b\in E} b\) is defined by knowing a point in \(b_E\) and the set
\(\{\val(x-y)\mid x,y\in b_E\}\) which is definable in \(\Vg(M) = \Vg(M_1)\). So
\(b_E\) and \(E\) are \(\L_1(M)\)-definable. It follows that \(E\) is coded in
\(\gBalls(M)\) and that the generic type \(\restr{\eta_{E}}{M_1}(x) \in
\TP^0(M_1)\) is \(\L_1(\code{E})\)-definable. If the type \(\eta_E\) is not
consistent with \(X\), by compactness, \(X\cap b_E\) is covered by finitely many
disjoint balls of \(M_0\). But then the smallest ball \(b_0\) in \(M_0\)
containing \(X\cap b_E\) is closed with radius in \(\Vg(M) = \Vg(M_1)\) and
\(X\cap b_E\) is covered by finitely many maximal open subballs of \(b_0\) which
are indeed balls of \(M_1\). It follows that \(E\) has finitely many direct
predecessors for $\treeq$, each of them in $\acleq(A, \code{E})$.

Starting with the class of \(\K\) and proceeding by induction, either the lemma
holds or the tree \(\cT\) has an initial infinite discrete finitely branching
tree. All the elements of this initial tree are \(\L_1(\gBalls(A))\)-definable
\(\equiv\)-classes.

By \cref{Hen prep}, there exists a finite set \(C\subseteq\K(M)\) preparing
\(X\) and can find an \(\equiv\)-class \(E\) in the infinite initial discrete
tree such that \(b_E\cap C = \emptyset\). Then \(b_E\subseteq X\) and hence
\(X\) is consistent with the \(\L_1(\gBalls(A))\)-definable type
\(\restr{\eta_E}{M_1}\).
\end{proof}

Let us now show that the type \(\restr{\eta_E}{M_1}\) can then be completed to
an \(\L_1(\gBalls(A)\cup\eq{\Vg}(A))\)-definable type \(p_1\in \TP^1(M_1)\)
consistent with \(X\):

\begin{lemma}
\label{compl L1}
Assume that \(\Vg(M)\) satisfies Property \D. Let \(A = \dcleq(A) \subseteq
\eq{M}\), let \(X\) be \(\L(A)\)-definable and let \(E\) be an
\(\L_1(\gBalls(A))\)-definable chain of balls in \(M_1\) such that
\(\restr{\eta_{E}}{M_1}\) is consistent with \(X\). Then, there exists an
\(\L_1(\gBalls(A)\cup\eq{\Vg}(\acl(A)))\)-definable type \(p \in \TP^1(M_1)\)
containing \(\restr{\eta_E}{M_1}\) and consistent with \(X\).
\end{lemma}

\begin{proof}
Let \(b_E = \bigcap_{b\in E} b\) and \(c\in X\) realize \(\restr{\eta_E}{M_1}\).
Then \(\gamma = \val(c-a)\) does not depend on \(a\in b_E(M_1)\). If \(\gamma
\in \Vg(M)\), then \(b_E\) is a closed ball and \(\restr{\eta_E}{M_1}\)
generates a complete type over \(M_1\).

If not, let \(C\) be the \(\L_1(\eq{\Vg}(A))\)-definable cut of \(\gamma\) over
\(\Vg(M_1)\). For every \(a\in b_E(M_1)\), let \(Y_a =\val(X-a) = \{v(x-a) \mid
x \in x\}\) --- it contains \(\gamma\).  Let \(\eta_C(x) = \{x < \gamma \mid
\gamma\in C\}\cup\{x > \gamma\mid \gamma\nin C\}\) be the
\(\L_1(\eq{\Vg}(A))\)-definable generic type of \(C\) over \(\Vg(M_1)\) and let
\(r(x) = \eta_C(x) \cup \{x\in Y_a \mid a\in b_E(M_1)\}\). Then \(r\) is
\(\L_1(\eq{\Vg}(A))\)-definable and \(\gamma\models r\). By  property \D, \(r\)
is contained an \(\L_1(\eq{\Vg}(A))\)-definable complete type \(q(x)\) over
\(\Vg(M_1)\).

Let \(\delta\models q\) and fix some \(a\in b_E(M_1)\). Since \(\delta\in Y_a\),
there exists \(c\in X\) such that \(\val(c-a) = \delta\), and since \(\delta
\models \eta_C\), we have \(c\models\restr{\eta_E}{M_1}\). So the type \(p(x) =
\restr{\eta_E}{M_1}(x) \cup q(\val(x-a))\) is consistent with \(X\). It is
complete by \cref{EQ Vg}, and it is
\(\L_1(\gBalls(A)\cup\eq{\Vg}(A))\)-definable.
\end{proof}

\begin{corollary}
\label{unary dens}
Assume that \(\Vg(M)\) satisfies Property \D. Let \(A = \acleq(A) \subseteq
\eq{M}\). Then any \(\L(A)\)-definable subset of \(\K\) is consistent with an
\(\L(\gBalls(A)\cup\eq{\Vg}(A))\)-definable type \(p \in \Tp^1(M)\). 
\end{corollary}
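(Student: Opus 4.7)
The plan is to combine the two preceding lemmas. The first lemma (the unnamed one above \cref{compl L1}) produces, via the tree-of-balls argument refined by \cref{Hen prep}, an \(\L_1(\gBalls(A))\)-definable type \(p_0 = \eta_E \in \Tp^0(M_1)\) consistent with \(X\). Inspecting its proof, the \(\equiv\)-class \(E\) is chosen so that the associated generalized ball \(b := b_E\) avoids the prepared finite set \(C \subseteq \K(M)\) from \cref{Hen prep}, which in fact forces \(b \subseteq X\); in particular \(b \in \gBalls(A)\) and \(\eta_b \vdash X\).

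With this \(b\) in hand, I apply \cref{compl L1}. This is the step where Property \D enters, through the choice of a definable completion of the cut of \(\val(x-a)\) for \(a \in b(M)\) on the value-group side. The outcome is an \(\L_1(\gBalls(A) \cup \eq{\Vg}(\acl(A)))\)-definable type \(p \in \Tp^1(M_1)\) extending \(\eta_b\), and therefore consistent with \(X\). Restricting \(p\) to parameters (and formulas) over \(M\) produces the desired type in \(\Tp^1(M)\).

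It remains to tidy up the parameter set. Since \(A = \acleq(A)\), the \emph{moreover} part of \cref{acl Vg} gives \(\eq{\Vg}(\acl(A)) \subseteq \acleq(\v(A)) \subseteq A\), so the defining parameters of \(p\) already lie in \(\gBalls(A) \cup \eq{\Vg}(A)\), as required. There is no genuine obstacle here: the real content is entirely absorbed by the two preceding lemmas, and this corollary amounts to their formal combination together with the observation about value-group imaginaries from \cref{acl Vg}.
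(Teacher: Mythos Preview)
Your overall strategy --- combine the unnamed lemma with \cref{compl L1} --- is exactly the paper's intended one, and the final tidying via \(A = \acleq(A)\) is fine. However, the claim that inspection of the unnamed lemma's proof always yields \(b := b_E \subseteq X\) (and hence \(\eta_b \vdash X\)) is not correct. That proof has two branches: in the second, via \cref{Hen prep}, one indeed obtains \(b_E \cap C = \emptyset\) and so \(b_E \subseteq X\); but in the first branch the argument stops as soon as \(\eta_E\) is merely \emph{consistent} with \(X\), with no control on \(b_E\). For instance, if \(\k(M)\) is not quadratically closed and \(X = \res^{-1}(\{\text{nonzero squares in }\k\})\), the tree argument halts at the root with \(b_E = \O\), and \(\eta_\O\) is consistent both with \(X\) and with its complement (pick \(c \models \eta_\O\) with \(\res(c)\) a square, or not); so \(b_E \not\subseteq X\) and \(\eta_{b_E} \nvdash X\). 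Your argument, as written, does not show that the completion \(p\) is consistent with \(X\) in such cases.

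What actually ensures consistency of \(p\) with \(X\) is built into the proof of \cref{compl L1}: the sets \(Y_a = \val(X - a)\) appearing there refer to this same \(X\), and Property \D is invoked to produce \(q\) consistent with \(\bigcap_a Y_a\) (not just with the cut of \(b\)), precisely so that \(p_1 = \eta_b \wedge q(\val(x-a))\) still meets \(X\). So the two lemmas do combine to give the corollary, but consistency with \(X\) comes from the choice of the value-group completion inside \cref{compl L1}, not from a containment \(b \subseteq X\) that need not hold.
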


\subsection{Germs of functions}\label{germs dens}

To prove density of definable types in general, we now wish to proceed by
transitivity. However, since we are working with definable \(\L_1\)-types, we
first need to address the potential difference between \(\acl\) and \(\acl[1]\).
For every tuple \(a\) in \(\eq{M}\), let \(a_\K\) enumerate \(a\cap\K\).

\begin{proposition}
\label{ind:def acl}
Assume that \(\Vg\) has property \D. Let \(A = \acl(A)\subseteq \eq{M}\) and let
\(X\) be pro-\(\L(A)\)-definable in \(M\) in the sorts \(\G\cup\eq{\Vg}\). Let
\(p(x)\) be an infinitary \(\L(A)\)-definable \(\L_1(M)\)-type which is
consistent with \(X\). Assume that for any \(a\models p\), we have \(a \subseteq
\acleq[1](Ma_{\K})\). Then, for every \(\L(A)\)-definable one-to-finite
correspondence \(F\) into \(\G\cup\eq{\Vg}\) defined at all realizations of
\(p\) in \(X\), there exists an \(\L(A)\)-definable \(q(xy)\in\TP^1(M)\)
containing \(p(x)\) and there exists \(ac\models q\) with \(a\in X\) such that
\(c\in \acleq[1](M a_\K c_\K)\) and \(F(a) \cap \dcleq[1](ac) \neq \emptyset\).
\end{proposition}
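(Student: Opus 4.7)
\emph{Plan.} The strategy is to extend $p$ by a tuple $c$ of field and value-group realisations so that, for any $ac\models q$, at least one element of $F(a)$ lies in $\dcleq[1](ac)$. The core tools are the decomposition of geometric imaginaries into unary pieces (\cref{decomp solv}), the unary density results (\cref{unary dens,compl L1}), and the orthogonality of $\Vg$ together with Property~\D (through \cref{acl Vg}).

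\emph{Construction for a fixed $e\in F(a)$.} After partitioning $p$ into $\L(A)$-definable pieces, I may assume $|F(a)|=n$ and the sort profile of its elements are constant along $p$. Fix $a\models p$ and some $e\in F(a)$; a priori this works only over $\acleq(Ae)\supseteq A$. If $e\in\eq{\Vg}$, then \cref{acl Vg} places $e\in\acleq(\v(Aa))$, and Property~\D supplies an $\L_1$-definable completion of the $\v$-data that pins $e$ down in $\dcleq[1]$. If $e\in\G$, I invoke \cref{decomp solv} to express $e$ via a sequence $(b_\ell)_\ell$ of unary pieces, each a coset of $\Vg/\Delta_\ell$ (handled as above) or a generalised-ball torsor $X_\ell$ over some $\K/\I_\ell$. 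At each ball step, \cref{unary dens} together with \cref{compl L1} adds a generic field point of $b_\ell$ to the current type, and the maps $f_\ell,g_\ell$ from \cref{decomp solv} recover $b_\ell$ from that field point in $\dcleq[1]$. Iterating across $\ell$ produces a type $q_e(x,y)\in\TP^1(M)$ extending $p$, $\L(\acleq(Ae))$-definable, whose realisations $(a,c_e)$ satisfy $e\in\dcleq[1](Ma(c_e)_\K)$; by construction $c_e$ consists of field points and $\v$-data, so $c_e\subseteq\acleq[1](Ma_\K(c_e)_\K)$ using the acl-hypothesis $a\subseteq\acleq[1](Ma_\K)$.

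\emph{Descent to $A$; main obstacle.} The principal difficulty is to descend definability from $\acleq(Ae)$ to $A$, since individual elements of $F(a)$ need not lie in $\dcleq(Aa)$. I would run the preceding construction uniformly for every $e\in F(a)$, producing a family $(c_e)_{e\in F(a)}$; as the finite set $F(a)$ is itself coded in $\dcleq(Aa)$, the overall assembly is $\aut(M/A)$-equivariant, and taking $c=\code{\{(e,c_e):e\in F(a)\}}$ yields a type $q(x,y)\in\TP^1(M)$ of $(a,c)$ that is $\aut(M/A)$-invariant, hence $\L(A)$-definable since $A=\acl(A)$. The delicate technical point, where the main difficulty lies, is to arrange the decomposition-ball choices so that for distinct $e,e'\in F(a)$ the corresponding $c_e,c_{e'}$ sit in $\L_1(Ma)$-distinguishable types --- for instance by shifting the balls selected at each step in an $Ma$-definable way so that they become pairwise disjoint, or by adjoining an auxiliary $\L_1$-generic field parameter that breaks the $\aut(M_1/Ma)$-symmetry among the $e$'s. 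With such distinguishability, each pair $(e,c_e)$ is $\dcleq[1]$-recoverable from the unordered code $c$, yielding $e\in\dcleq[1](Mac)$ for every $e\in F(a)$, which is amply more than the required conclusion. The acl-hypothesis on $c$ is preserved because $c_\K$ collects all field components of the $c_e$'s and $c$ is then $\L_1$-definable over $Mac_\K$; combined with $a\subseteq\acleq[1](Ma_\K)$ this gives $c\subseteq\acleq[1](Ma_\K c_\K)$. The identification \cref{identification} of module codes between $M$ and $M_1$ is used throughout so that the $\L$-construction from \cref{decomp solv} propagates faithfully into $\TP^1(M)$.
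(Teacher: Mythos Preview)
Your proposal has a genuine gap in the descent to $\L(A)$-definability, and the device the paper uses to achieve this is absent from your outline.

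You build, for each $e\in F(a)$, a resolving tuple $c_e$ working over $\acleq(Ae)$, then take $c=\code{\{(e,c_e):e\in F(a)\}}$ and claim that ``$\aut(M/A)$-equivariance'' of the construction makes $q=\tp_1(ac/M)$ invariant. But this is not an argument: the construction of each $c_e$ appeals to \cref{compl L1} and Property~\D, which only assert the \emph{existence} of a suitable completion over the relevant parameters; they do not single out a canonical one, so nothing forces the choices to cohere under $\aut(M/A)$. The distinguishability problem you flag is secondary --- the conclusion only asks for \emph{one} element of $F(a)$ in $\dcleq[1](ac)$ --- and your proposed remedies (shifting balls, auxiliary generics) address that recovery question rather than the $\L(A)$-definability of $q$ itself.

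The paper's descent rests on a minimality argument (\cref{L1 germ}) you do not mention: whenever $F(x)$ is bounded, along $p(x)\wedge x\in X$, by some $\L_1(M)$-definable finite correspondence $G(x)$, the bound of \emph{minimal} size is unique along $p$, so $\germ{p}{G}$ is fixed by every $\sigma\in\aut(M/A)$ and hence lies in $A$; a minimal $\L_1(M)$-definable nonempty piece $G_0\subseteq G$ then has $\germ{p}{G_0}\in\acleq(A)=A$, and $q(xy)=p(x)\wedge y\in G_0(x)$ is $\L(A)$-definable by construction. The proof is organised as an iteration over target sorts rather than a direct decomposition of each $e$. For $\eq{\Vg}$ and for non-open generalised balls, \cref{acl Vg} and \cref{points balls} directly furnish an $\L_1(M)$-bound $G$, and the germ claim applies. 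For $\k$ there is a dichotomy: either $p\tensor\eta_\k$ is consistent with $y\in F(x)$ and one extends by the generic $\eta_\O$, or $F$ is $\L_1$-bounded and the germ claim applies again. Interleaved with these, \cref{gen lift ball} supplies $\L(A)$-definable extensions by generics of balls whose codes are already in $a$, until the hypotheses of \cref{lift G} are met and the general $\G$ case follows. Your direct application of \cref{decomp solv} to each $e\in F(a)$ bypasses this structure and, with it, the canonicity device that actually produces $A$-definable data.
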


Note that if \(a\in \G(M)\cup\eq{\Vg}(M)\), then as the induced
\(\L_1\)-structure on \(M\) is \(\L\)-definable, then \(\acl_1(a) \cap
(\G(M)\cup\eq{\Vg}(M))\subseteq \acl(a)\). Furthermore, if \(a\subseteq \K(M)\),
\(\K(\acl(a)) = \alg{\Qq(a)}\cap \K(M) \subseteq \K(\acl_1(a))\).

\begin{proof} 
If the proposition holds for \(F\), then for some \(\L_1\)-definable map, we
have \(f(ac) \in F(a)\). So replacing \(p\) by \(q\) and \(X\) by \(x\in X
\wedge f(xy)\in F(x)\), the hypothesis of the proposition stills holds and we
may assume that if \(a \in X\) realizes \(p\) then \(F(a)\cap\dcl_1(a)
\neq\emptyset\). The proof now proceeds by proving the proposition for various
specific \(F\) and changing \(p\) and \(X\) as above, until we exhaust all
possible \(F\). To do so, it suffices, by \cref{lift G}, to add a point to
\(a_K\) in every \(Aa\)-algebraic generalized ball and a lift of every element
in \(\Gamma/\Delta(\dcl(Aa))\), for any convex \(\Delta\leq\Gamma\).

We first consider the case where \(F\) is almost definable in
\(M_1\).
\begin{claim}\label{L1 germ}
If there exists an \(\L_1(M)\)-definable one-to-finite correspondence \(G\) such
that \(p(x) \wedge x\in X \vdash F(x)\subseteq G(x)\), then the proposition
holds.
\end{claim}

\begin{proof}
Since \(p\in \Tp^1(M)\), the cardinal of \(G(x)\) is constant when \(x\) varies
over realizations of \(p\); and we may assume that it is minimal. Then for every
other such \(G'\), we have \(p(x)\wedge x\in X\vdash F(x) \subseteq G(x)\cap
G'(x)\) and hence \(p(x) \wedge x\in X \vdash G(x) = G'(x)\). In other words,
for some (and hence for every) \(a \models p\), \(G(a) = G'(a)\). This holds in
particular of any \(G' = \sigma(G)\), where \(\sigma\in\aut(M/A)\) and thus
\(\germ{p}{G} \in \dcleq(A) = A\) --- recall that we assumed \(M\) to be
sufficiently saturated and homogeneous. Let \(G_0\subseteq G\) be minimal
\(\L_1(M)\)-definable such that \(p(x) \vdash \emptyset \neq G_0(x) \subseteq
G(x)\). Then \(\germ{p}{G_0} \in\acleq(A) = A\). The type \(q(x y) = p(x)\wedge
y\in G_0(x)\) is complete as cardinality of \(G_0\) is minimal and it is
\(\L(M)\)-definable and \(\aut(M/A)\)-invariant; so it is \(\L(A)\)-definable.
By construction, for any \(ac\models q\), we have \(c\in G_0(a) \subseteq
\acleq[1](Ma) \subseteq \acleq[1](M a_K)\). Moreover, if \(q(xy)\) is not
consistent with \(x\in X\wedge y\in F(x)\), then \(p(x) \wedge x\in X \vdash
F(x)\subseteq G(x)\setminus G_0(x)\), contradicting the minimality of \(G\). So
the type \(q\) is as required.
\end{proof}

Let us now assume that the co-domain of \(F\) is \(\eq{\Vg}\). By strong stable
embeddedness of \(\Vg\) and \cref{acl Vg}, for every \(a\models p\),
\[\eq{\Vg}(\acleq(Aa)) \subseteq \eq{\Vg}(\acleq(Ma)) =
\eq{\Vg}(\acleq(Ma_{\K})) \subseteq \acleq[1](\v(M(a_\K)))\] and hence, by
compactness, there exists an \(\L_1(M)\)-definable one-to-finite correspondence
\(G\) such that \(p(x) \vdash F(x) \subseteq G(x)\). We now conclude with
\cref{L1 germ}. As indicated at the start of the proof, we may therefore assume
that \(a\) contains all of \(\eq{\Vg}(\acleq(Aa))\).

\begin{claim}
\label{gen lift ball}
Let \(b: p \to \gBalls\) be \(\L_1\)-definable. Then there exists an
\(\L(A)\)-definable type \(q(xy)\in\TP^1(M)\) finitely satisfiable in \(M\)
containing \(p(x)\cup\{y\models\restr{\eta_{b(x)}}{Mx}\}\).
\end{claim}

\begin{proof}
Let \(a\models p\). Since \(\eq{\Vg}(\acleq_1(b(a))) \subseteq
\eq{\Vg}(\acl_1(a)) \subseteq \eq{\Vg}(\acl(a))\subseteq a\), by \cref{compl L1}
applied with \(M\) elementary extension of \(M_1\) containing \(a\), there exists an
\(\L_1(a)\)-definable type \(r_a(y)\) containing \(\eta_{b(a)}(y)\) and \(y\in
b(a)\). Let \(c\models \restr{r_a}{Ma}\). Then \(\tp_1(ac/M)\) is as required.
\end{proof}

For every \(\L(Aa)\)-definable convex subgroup \(\Delta\leq\Vg\) and \(\gamma
\in (\Vg/\Delta)(\acleq(Aa))\subseteq a\), the set
\(\val_\Delta^{-1}([\gamma,\infty])\) is an \(\L_1(a)\)-definable generalized
ball. Let \(q\) be as in \cref{gen lift ball} applied to any \(\L\)-definable
function \(b\) such that \(b(a) = \val_\Delta^{-1}([\gamma,\infty])\). If
\(ac\models q\), we have \(\val_\Delta(c) = \gamma\). Replacing \(p\) by \(q\),
we may assume that \(\val_\Delta(a_\K)\) contains all of
\((\Vg/\Delta)(\acleq(Aa))\).

If the co-domain of \(F\) is \(\k\), let \(\eta_k(y) \in \TP^1(M)\) be the
\(\L\)-definable generic of \(\k\) --- that is, the only non-algebraic type
concentrating on \(\k\). If \(p(x)\tensor \eta_\k(y)\) is not consistent with
\(x\in X \wedge y \in F(y)\), then there exists an \(\L_1(M)\)-definable
one-to-finite correspondence \(G\) such that \(p(x) \wedge x\in X \vdash F(x)
\subseteq G(x)\), by \cref{L1 germ} and \cref{gen lift ball}, we find \(q(xy)\)
consistent with \(x\in X \wedge \res(y) \in F(x)\). On the other hand, if
\(p(x)\tensor \eta_\k(y)\) is consistent with \(x\in X \wedge y \in F(x)\), let
\(q(x z) = p(x)\tensor \restr{\eta_\O(z)}{M}\), then, by hypothesis, \(q(xz)\)
is consistent with \(x\in X \wedge \res(z) \in F(x)\). Changing \(X\) and \(p\)
we may therefore assume that \(\k(\acleq(Aa))\subseteq \res(a_\K)\).

We may also assume that \(a_\K\) is a field. Now, if \(\xi\in \RV(\acleq(Aa))\),
then \(\val(\xi) \in \Vg(\acl(Aa)) \subseteq \val(a_\K)\). Let \(c\in a\) be
such that \(\val(c) =\val(\xi)\), then \(\xi\rv(c)^{-1} \in \k(\acl(Aa))
\subseteq \res(a_\K)\). It follows that \(\RV(\acleq(Aa))\subseteq \rv(a_\K)\).

Let us now assume that the co-domain of \(F\) is the set of generalized balls
that are not open balls. For any \(a\models p\) and \(b\in F(a) \subseteq
\acleq(Aa) \subseteq \acleq(Ma_\K)\), by \cref{points balls}, we have
\(b(\acl_1(Ma_\K)) \supseteq b(\acl(Ma_\K)) \neq \emptyset\). Moreover, the
radius of \(b\) is in \(\eq{\Vg}(\acleq(Aa))\subseteq \acl[1](Ma_\K)\), by
strong stable embeddedness. Hence, \(b \in \acleq[1](Ma)\) and we conclude with
\cref{L1 germ}. So we may assume that \(b\in a\). Applying \cref{gen lift ball},
we may further assume that \(b\) has a point in \(a\).

Now, if \(b\in \acleq(Aa)\) is an open ball, the smallest closed ball around
\(b\) has a point \(c \in a\) and \(b-c \in \RV(\acleq(Aa)) \subseteq
\rv(a_\K)\) also has a point in \(a\), hence so does \(b\). Recall that we
already assumed that, for every \(\acl(Aa)\)-definable convex subgroup
\(\Delta\leq\Vg\), \(\Vg/\Delta(\acleq(Aa)) \subseteq \val_\Delta(a_\K)\). By
\cref{lift G}, we have \(\G(\acleq(Aa)) \subseteq \dcleq[1](a)\).

Recall that to get to that point, we have replaced \(p(x)\) by a type \(q(xy)\)
consistent with \(X\) and such that if \(ac\models q\), then \(c\in \acl(Ma_\K
c_\K)\). So the proposition is proved.
\end{proof}

Applying \cref{ind:def acl} to all possible \(F\) --- as we have actually done
in the proof of the proposition --- we get:

\begin{corollary}
\label{def acl}
Let \(A = \acleq(A) \subseteq \eq{M}\), let \(X\) be \(\L(A)\)-definable and let
\(p \in \Tp^1(M)\) be \(\L(A)\)-definable and consistent with \(X\). Then there
exists \(a\models p\) in \(X\) such that
\[\tp_1(\G(\acleq(Aa))\cup\eq{\Vg}(\acleq(Aa))/M)\] is \(\L(A)\)-definable.
\end{corollary}

The proof of \cref{density} is now a standard induction.

\begin{proof}[Proof of \cref{density}]
Recall that, by \cref{code type}, we can identify \(\L(A)\)-definable types in
\(\TP^1(M)\) which are finitely satisfiable in \(M\) with their unique
\(\L_1(\G(A)\cup\eq{\Vg}(A))\)-definable extension to \(M_1\). Let \(N\prec M\)
be small and contain \(A\). Note that by \cref{EQ Vg}, \(N_1 = \ur{N} \prec
M_1\). So it suffices to prove the theorem over \(N\).

We proceed by induction on the integer \(n\) such that \(X\subseteq \K^n\). Let
\(\pi : \K^n \to \K\) be the projection on the first coordinate. By \cref{unary
dens}, there exists an \(\L(A)\)-definable type \(p\in\TP^1(M)\) consistent with
\(\pi(X)\).
Let \(a\in X\) realize \(\restr{p}{N}\) and let \(c\) enumerate
\(\G(\acleq(Aa))\cup\eq{\Vg}(\acleq(Aa))\). By \cref{def acl}, we may assume
that \(\tp_1(c/N)\) is \(\L(A)\)-definable.

By induction, there exists an \(\L_1(c)\)-definable \(r_c(z)\in\TP^1(M_1)\)
consistent with \(X_a\). In particular \(\restr{r_c}{M}\) is
\(\L(c)\)-definable. Let \(d\in X_a\) realize \(\restr{r_c}{M}\). Then \(q =
\tp_1(acd/N)\) is \(\L(A)\)-definable and \(ad \in X\). But \(q\) is
\(\L(\G(A)\cup\eq{\Vg}(A))\)-definable, by \cref{code type}, concluding the
proof.
\end{proof}

In the case \(M = \ur{M}\), we can deduce a slight generalization of
\cite[Theorem 5.12]{Vic-EIACk}:

\begin{corollary}
\label{wei res ACF}
Assume that:
\begin{itemize}
\item \(\Vg(M)\) satisfies Property \D;
\item \(\k(M)\) is a pure algebraically closed field.
\end{itemize}
Then \(M\) weakly eliminates imaginaries down to $\G \cup \eq{\Vg}$.
\end{corollary}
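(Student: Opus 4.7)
The algebraic closedness of $\k(M)$, combined with equicharacteristic zero, forces $\ur{M} = M$, so $M_1 = M$ and Theorem~\ref{density} specializes to: for every $A = \acleq(A) \subseteq \eq{M}$ and every $\L(A)$-definable $X \subseteq \K^n$, there exists an $\L(\G(A) \cup \eq{\Vg}(A))$-definable complete type $p \in \Tp(M)$ with $p \vdash X$. The plan is to deduce weak elimination of imaginaries from this density statement via a direct orbit-distinguishing argument, without revisiting any of the geometric machinery of Section~\ref{sec:code mod}.

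Fix $e \in \eq{M}$ and let $A = \acleq(e)$; it suffices to show that every $\sigma \in \aut(M)$ fixing $(\G \cup \eq{\Vg})(A)$ pointwise also fixes $e$. First I would carry out the standard reduction to the case where $e$ is the code of a set $X_e \subseteq \K^n$ of the form $g^{-1}(\{e\})$ for some $\emptyset$-definable surjection $g \colon \K^n \to S$ onto the imaginary sort containing $e$. For an $\RV$-expansion this uses only that $\RV = \K^\times/(1 + \m)$ is itself an imaginary of $\K$, so any ambient product of basic sorts can be pulled back to some $\K^n$ via $\rv$; the key structural point is that distinct fibers of $g$ are pairwise disjoint. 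Now apply Theorem~\ref{density} to $X_e$ to obtain a complete $\L(\G(A) \cup \eq{\Vg}(A))$-definable type $p$ on $M$ with $p \vdash X_e$. Since $\sigma$ fixes the defining schema of $p$ pointwise, we have $\sigma(p) = p$, and hence $p \vdash X_e \cap \sigma(X_e) = X_e \cap X_{\sigma(e)}$. In particular this intersection is non-empty, and disjointness of the fibers of $g$ then forces $\sigma(e) = e$. It follows that $e \in \dcleq((\G \cup \eq{\Vg})(A))$, and compactness gives a finite tuple $d \in (\G \cup \eq{\Vg})(\acleq(e))$ with $e \in \dcleq(d)$, which is weak EI.

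The main obstacle --- and indeed the only substantive step beyond citing Theorem~\ref{density} --- is the reduction to fibers of $\emptyset$-definable maps from some $\K^n$. Some care is required in the $\RV$-expansion setting to ensure that the reduced set is still a fiber of an $\emptyset$-definable map (so that the disjointness-of-fibers argument applies), but this is routine once one recalls how $\RV$ is expressed as an imaginary of $\K$. Beyond ensuring $M_1 = M$, no further use of the hypothesis that $\k(M)$ is algebraically closed enters the argument.
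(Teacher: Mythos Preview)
Your argument is correct and is precisely the standard density-to-weak-elimination deduction that the paper intends --- the corollary is stated without proof immediately after \cref{density}. The only subtlety worth flagging is that $M_1$ is defined as $\ur{M}$ equipped with the \emph{induced $\Vg$-structure}, so your identification $M_1 = M$ as $\L$-structures (and hence the completeness of $p$ as an $\L$-type over $M$) tacitly assumes the ambient $\RV$-expansion is a $\Vg$-expansion; this is the intended context here (compare the reference to \cite{Vic-EIACk} and the $\k$-$\Vg$-expansion hypotheses in \cref{conclusion}), and with that reading your proof goes through verbatim.
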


\begin{proof}
Fix some \(e\in\eq{M}\). Let \(A = \acl(e)\) and let \(f\) be an
\(\L\)-definable map with domain \(\K^n\) and such that \(X = f^{-1}(e) \neq
\emptyset\). By \cref{density}, there exists an
\(\L(\G(A)\cup\eq{\Vg}(A))\)-definable type \(p(x)\in\TP^1(M)\) consistent with
\(X\). But since \(M=M_1\), this type is complete and we have \(p(x)\vdash f(x)
= e\). It follows that \(e\in \dcl(\G(A)\cup\eq{\Vg}(A))\) proving weak
elimination.
\end{proof}

\section{Invariant Extensions}
\label{invext}
In this section, we will consider the invariance of types over stably embedded
definable sets, \emph{e.g.} $\RV$. This gives rise to several notions of
invariance isolated in \cite[Section~4.2]{HilRid-EIAKE}.

Whenever $D = \bigcup_{i} D_{i}$ is ind-\(\L\)-definable and stably embedded, we
denote by $\eq{D}$ the ind-\(\L\)-interpretable union of all \(\L\)-interpretable
sets $X$ that admit an \(\L\)-definable surjection $\prod_{j} D_{i_{j}}
\rightarrow X$.

Let \(M\) be sufficiently saturated and homogeneous. Let \(C\subseteq M\) be
potentially large and let \(D\) be ind-\(\L\)-definable and stably embedded.
Let \(p\) be a partial type over \(M\) (closed under implication).

\begin{definition} We say that the type \(p\):
\begin{enumerate}
\item is $\Aut(M/C)$-invariant if for every $\sigma \in \Aut(M/C)$, we have
$p=\sigma(p)$;
\item has $\Aut(M/C)$-invariant $D$-germs if it is $\Aut(M/C)$-invariant and so
is the $p$-germ of every (relatively) $M$-definable map $f:p\rightarrow \eq{D}$;
\item is $\Aut(M/D)$-invariant if it has $\Aut(M/D(M))$-invariant $D$-germs.
\end{enumerate}
\end{definition}

A nice property of the stronger notion is that it is transitive --- \emph{cf.}
\cite[Lemma~4.2.4]{HilRid-EIAKE}:

\begin{lemma}
\label{transitive}
Let \(N \succ M\) be saturated\footnote{We assume such models exist.} strictly
larger than \(M\). Let $p \in S(M)$ have $\Aut(M/C)$-invariant $D$-germs, let $a
\vDash p$ in $N$ and let $q \in S(N)$ be $\Aut(N/CD(N)a)$-invariant. Then
$\restr{q}{M}$ is $\Aut(M/C)$-invariant.

Moreover, if $q$ has $\Aut(N/CD(N)a)$-invariant $E$-germs, for some
(ind-)$\L$-definable set $E$, then $\restr{q}{M}$ has
$\Aut(M/C)$-invariant $E$-germs.
\end{lemma}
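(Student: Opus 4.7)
My plan is to reduce both assertions to the following extension statement: for every $\sigma \in \Aut(M/C)$, there exists an extension $\tilde{\sigma} \in \Aut(N/CD(N)a)$ of $\sigma$. Granted this, the hypothesis that $q$ is $\Aut(N/CD(N)a)$-invariant immediately gives $\sigma(\restr{q}{M}) = \restr{q}{M}$. For the moreover part, if $g : q \to \eq{E}$ is an $\L(M)$-definable map and $b \models q$, then $\sigma(g)(b) = \tilde{\sigma}(g)(b) = g(b)$ by $\Aut(N/CD(N)a)$-invariance of the $q$-germ of $g$; since $b \models \restr{q}{M}$, this yields $\Aut(M/C)$-invariance of $\germ{\restr{q}{M}}{g}$.

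To produce $\tilde{\sigma}$, I will first show that the partial map $\tau$ on $M \cup D(N) \cup \{a\}$ given by $\tau|_M = \sigma$, $\tau(a) = a$, and $\tau|_{D(N)} = \mathrm{id}$ is elementary, then extend it to all of $N$ using the saturation of $N$. That $\tau$ is well-defined on $a$ uses the $\Aut(M/C)$-invariance of $p$ itself (which is part of the germ hypothesis, since invariance is the first clause of the definition). The heart of the matter is to verify elementariness, namely that for every $\L(M)$-formula $\phi(x, y, z)$ with $x$ ranging over a tuple of sorts in $D$, every $m \in M$, and every $d \in D(N)$, one has
\[ \models \phi(d, a, m) \iff \models \phi(d, a, \sigma(m)). \]
For this, I invoke the stable embeddedness of $D$ to encode the $\L(Mm)$-definable subset $\{d' \in D^{*} : \phi(d', x, m)\}$ by an element of $\eq{D}$, thereby producing an $\L(M)$-definable map $f : p \to \eq{D}$ via $x \mapsto \code{\{d' : \phi(d', x, m)\}}$. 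The germ hypothesis applied to $f$ gives $\germ{p}{\sigma(f)} = \germ{p}{f}$; evaluating at $a \models p$ (using completeness of $p$) yields $\sigma(f)(a) = f(a)$, which upon unfolding the action of $\sigma$ on parameters is exactly the desired equivalence.

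The main obstacle is the elementariness check above, which is the only place where the full strength of the germ hypothesis --- and the stable embeddedness of $D$ --- is used; without stable embeddedness, one could not turn an $\L(Mm)$-definable relation on $D$ into the value of an $\L(M)$-definable function into $\eq{D}$, and the germ condition would not apply. Once $\tau$ is shown to be elementary, the extension to an automorphism of $N$ is a routine saturation argument, and the moreover clause follows by the same scheme, transposed from $\phi$ to the defining formula of $g$.
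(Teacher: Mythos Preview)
The paper does not give its own proof of this lemma; it merely cites \cite[Lemma~2.4.2]{HilRid-EIAKE}. Your argument is the standard one and is correct in outline: build an extension $\tilde{\sigma}\in\Aut(N/CD(N)a)$ of each $\sigma\in\Aut(M/C)$, using the $D$-germ hypothesis to verify that fixing $D(N)$ and $a$ while applying $\sigma$ to $M$ is elementary.

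One point deserves a little more care. You describe the passage from the elementary partial map $\tau$ on $M\cup D(N)\cup\{a\}$ to an automorphism of $N$ as ``a routine saturation argument.'' Saturation alone is not enough here, since $D(N)$ may have the same cardinality as $N$. What makes the extension go through is again the stable embeddedness of $D$: one first extends $\restr{\tau}{M\cup\{a\}}$ to some $\rho\in\Aut(N)$ by saturation over $|M|$, and then corrects $\rho$ on $D(N)$ using the fact (e.g.\ \cite[Lemma~10.1.5]{TenZie}, as invoked in the proof of \cref{key}) that an automorphism of a stably embedded set over a small parameter set extends to an automorphism of the ambient structure. Your elementariness check supplies exactly the input needed for this correction. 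So stable embeddedness is used twice --- once to encode the relation on $D$ as a value in $\eq{D}$ (as you note), and once in the extension step --- and it would be worth making the second use explicit.
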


The main goal of this section is to prove the following statement. Recall
(\cref{LinA}) that \(\Lin_A = \coprod_{s\in\Lat(\acleq(A))} \red(R_s)\) and that
a bounded regular rank group is an ordered abelian group with countably many
definable convex subgroups.

\begin{theorem}
\label{invariantcompletions}
Let \(M\) be sufficiently saturated and homogeneous \(\RV\)-expansion of a model
of \(\Hen[0,0]\) such that the value group \(\Vg\) and residue field are stably
embedded and orthogonal and \(\Vg\) is \emph{either}:
\begin{itemize}
\item dense with property \D;
\item or, a pure discrete ordered abelian group of bounded regular rank --- in
that case we also add a constant for a uniformizer \(\pi\).
\end{itemize}
Let \(M_0 = \alg{M}\), let $A =\acl(A)\subseteq \eq{M}$ be small and let \(a\)
be a tuple in \(\K(N)\), for some \(N\succ M\). Assume that $\tp_{0}(a/M)$ is
$\aut(M/\G(A))$-invariant, then $\tp(a/M)$ is
$\Aut(M/\G(A),\RV(M),\Lin_{A}(M))$-invariant.
\end{theorem}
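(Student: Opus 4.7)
The plan is to upgrade the \(\L_0\)-invariance of \(\tp_0(a/M)\) over \(\G(A)\) to full \(\L\)-invariance of \(\tp(a/M)\) over \(\G(A),\RV(M),\Lin_A(M)\), using Bélair's \(\RV\)-quantifier elimination (\cref{EQ RV}) as the bridge and a germ analysis into the generalized \(\k\)-linear imaginaries to pin down the residue part.

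Fix \(\sigma \in \Aut(M/\G(A),\RV(M),\Lin_A(M))\), an \(\L\)-formula \(\phi(x,y)\) and a tuple \(m \in M\); we want \(\phi(a,m) \Leftrightarrow \phi(a,\sigma(m))\). By \cref{EQ RV}, \(\phi(x,y)\) is equivalent to \(\psi(\rv(P(x,y_\K)),y_\RV)\) for some tuple of polynomials \(P\) with integer coefficients and some \(\RV\)-formula \(\psi\). Since \(\sigma\) fixes \(\RV(M)\) pointwise, and in particular \(m_\RV\), the question reduces to showing that \(\rv(P(a,m_\K))\) and \(\rv(P(a,\sigma(m_\K)))\) realise the same \(\L\)-type over \(\RV(M)\); by stable embeddedness of \(\RV \cup \Lin\) (\cref{Linproperties}), it is enough to test this over \(\RV(M) \cup \Lin_A(M)\).

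Decompose \(\rv\) into its valuation component and its angular component. The valuation component is \(\L_0\)-definable, so \(\val(P(a,m_\K)) = \val(P(a,\sigma(m_\K)))\) as elements of \(\Vg(M)\), by our hypothesis. For the angular component, consider the \(\L(Ma)\)-definable function \(f\colon y_\K \mapsto \rv(P(a,y_\K))\); normalising \(P(a,y_\K)\) by an element of the same valuation --- a power of the uniformiser \(\pi\) in the discrete case, or a canonical normaliser supplied by \cref{uppertrian}, \cref{decomp solv} and \cref{lift G} (the last two exploiting Property \D{} in the dense case) --- we obtain a function into \(R/\m R\) for an \(\m\)-avoiding \(M\)-definable \(\O\)-module \(R\). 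The technical heart of the argument is then the germ analysis: for \(\L(Ma)\)-definable functions, built from polynomial data in \(a\), into such an \(R/\m R\), the \(\tp_0(a/M)\)-germ is coded in \(\Lin_A\). This combines \cref{buildinglat} to make the module \(R\) canonical over \(A\), the closure properties of \(\Lin_A\) from \cref{Lin closed}, the coding of subspaces via \cref{code subspace}, and the characterisation of almost-\(\k\)-internal sets from \cref{almost k int}.

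Once the germ of the angular component is coded in \(\Lin_A(M)\) and therefore fixed by \(\sigma\), evaluating at \(m_\K\) and at \(\sigma(m_\K)\) yields elements of the same \(\k\)-vector space with matching types over \(\Lin_A(M)\). Combining this with the \(\L_0\)-level agreement of valuations via \cref{transitive} applied to the stably embedded set \(\RV \cup \Lin\), we obtain the equality of types \(\tp_L(\rv(P(a,m_\K))/\RV(M) \cup \Lin_A(M)) = \tp_L(\rv(P(a,\sigma(m_\K)))/\RV(M) \cup \Lin_A(M))\), which closes the reduction. The main obstacle is the intermediate germ-coding step: the module \(R\) must be produced canonically over \(A\) (via \cref{code mod G} and \cref{decomp solv}), and one must lift the relevant geometric points to the valued field by a sufficiently invariant type --- this is where Property \D{} becomes indispensable in the dense case, while the uniformiser \(\pi\) plays the analogous role in the discrete case.
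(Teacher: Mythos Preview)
Your approach has the right starting intuition --- reduce via \cref{EQ RV} to controlling \(\rv(P(a,m_\K))\) versus \(\rv(P(a,\sigma(m_\K)))\) --- but the central ``normalisation and germ coding'' step does not go through as written. The function \(x\mapsto \rv(P(x,m_\K))\) lands in \(\RV(N)\), not in any \(\Lin_A\)-sort, and your proposed normalisation to force the target into some \(R/\m R\) is not well-defined: the valuation \(\val(P(a,m_\K))\) lives in \(\Vg(N)\), varies with \(m\), and there is no \(\G(A)\)-canonical element of that valuation to divide by. The uniformiser \(\pi\) does not help in the discrete case since \(\Vg\) need not be \(\Zz\), and in the dense case \cref{decomp solv} and \cref{lift G} produce points of modules, not sections of \(\rv\). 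Moreover, the germ results you invoke (\cref{germs intk}, \cref{code germs dense}, \cref{code germs discrete}) apply to germs, over a \emph{definable} type, of functions whose target is already some \(\Lin_{Aa}\)-sort; they say nothing about germs of \(\RV\)-valued functions over a merely \(\Aut\)-invariant \(\tp_0(a/M)\). Finally, your appeal to \cref{transitive} at the end is a misapplication: that lemma transfers invariance through a realised intermediate type, it does not combine ``valuation-level'' and ``residue-level'' invariance.

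The paper's proof is architecturally different. Rather than analysing \(\rv(P(a,m))\) directly, it first builds (\cref{inv res}) a set \(C\subseteq\K\) with \(\G(A)\subseteq\dcl_1(C,\Vg(M))\) and such that \(\tp(C/M)\) already has \(\Aut(M/\G(A),\RV(M),\Lin_A(M))\)-invariant \(\RV\)-germs. The germ coding you reached for is used \emph{inside} this construction, via \cref{higherarity}, one generalised ball at a time --- there the target genuinely is a single open ball (an element of \(\Lin_{Aa}\)), and \cref{code germs dense}/\cref{code germs discrete} (resting on \cref{germs intk}) apply. Then \(C\) is enlarged to a model using \cite[Corollaries~4.4.1 and 4.4.3]{HilRid-EIAKE}, one arranges \(a\models\restr{p}{N}\), and \cite[Corollary~4.3.17]{HilRid-EIAKE} upgrades the \(\L_0\)-invariance of \(\tp_0(a/N)\) over \(C\) to \(\L\)-invariance over \(C\cup\RV(N)\); \cref{transitive} then pulls this back to \(M\). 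The missing idea in your attempt is precisely this intermediate resolution \(C\): without it there is no handle on \(\rv(M(a))\), which is what \cref{key} needs.
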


We follow the general strategy of \cite[Section~4]{HilRid-EIAKE}. The main new
challenge is to prove the equivalent (\cref{inv res}) of
\cite[Corollary~4.4.6]{HilRid-EIAKE} in the present setting since the geometric
sorts are now larger.

\subsection{Germs of functions into the linear sorts}\label{subsec: germs of functions in Lin}

One important ingredient of the proof of \cref{inv res} is a description of the
germ of certain functions into the linear sorts (\emph{cf}. \cref{code germs
dense,code germs discrete}). We proceed in three steps. First, we consider the
case of a valued field with algebraically closed residue field. Then we
consider valued fields with dense value groups (and arbitrary residue fields).
Finally, we consider valued fields with discrete value groups for which a
serious obstruction arises: the classification of \(\k\)-internal sets given in
\cref{almost k int} does not hold for discrete value groups. This can be
circumvented by considering a ramified extension with dense value group.

Let \(M\) be sufficiently saturated and homogeneous (\(\RV\)-expansion of a)
model of \(\Hen[0,0]\) in a language \(\L\),  whose value group \(\Vg\) is
stably embedded, has property \D and is orthogonal to \(\k\), which is itself
stably embedded.

We first prove that germs of functions into the linear sorts are internal to the
residue field (see \cref{germs intk}). We follow the general strategy of
\cite[Lemma~3.4.1]{HilRid-EIAKE} and first prove a result on the growth of
\(\dcl\) in \(\eq{\Lin_A}\). Recall the definition of an open generalized ball
(\cref{def gen ball}).

\begin{lemma}
\label{algebraic}
Let \(A\subset\eq{M}\) be small and let $U$ be an open $\infty$-\(A\)-definable
generalized ball. Let $a$ be a generic element of $U$ over $A$ in some \(N\succ
M\). Then $\eq{\Lin_{A}}(\dcl(Aa)) \subseteq \acl(A)$.
\end{lemma}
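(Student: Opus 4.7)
The idea is to reduce to showing that a generic of an open ball cannot contribute new points to the residue field, and then transport this via a triangular basis of \(R_s\).

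First I would observe that any \(e \in \Lin_A \cap \dcl(Aac)\) lies in some \(\red(R_s)\) with \(s \in \Lat(A)\) (using \(A = \acl(A)\)), and invoke \cref{uppertrian} (after passing to a suitable spherically complete extension if necessary) to obtain a triangular basis of \(R_s\) over \(\acl(Ac)\). The induced isomorphism \(\red(R_s) \isom \k^r\) is \(\acl(Ac)\)-definable, so it suffices to prove that every \(\acl(Ac)\)-definable function \(h \colon U \to \k\) satisfies \(h(a) \in \acl(Ac)\).

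Then I would apply Flenner's preparation (\cref{Hen prep}) to obtain a finite \(C \subseteq \acl(Ac) \cap \K\) with \(h(x) = \tilde h(\rv_C(x))\), and analyse \(\rv_C(a)\) coordinate by coordinate. For \(c \in C \setminus U\), \(\rv(a - c) = \rv(u - c)\) for any \(u \in U\), hence this coordinate is \(\acl(Ac)\)-algebraic. For \(c, c' \in C \cap U\), one compares \(\v(a - c)\) --- which sits strictly below every element of \(\Vg(\acl(Ac)) \cap (\gamma, \infty)\) by genericity of \(a\) in \(U\) (with \(\gamma\) the radius of \(U\)) --- with \(\v(c - c') \in \Vg(\acl(Ac)) \cap (\gamma, \infty)\), and concludes \(\rv(a - c) = \rv(a - c')\). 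So \(\rv_C(a)\) is determined by its \(\acl(Ac)\)-algebraic coordinates plus a single element \(\xi = \rv(a - c_0)\) for any \(c_0 \in C \cap U\), yielding \(h(a) = \tilde h'(\xi)\) for some \(\acl(Ac)\)-definable \(\tilde h' \colon \RV^\star \to \k\).

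Finally, the element \(\xi\) lies in a fiber \(\v^{-1}(v_{\mathrm{gen}})\) of \(\RV^\star \to \Vg\) where \(v_{\mathrm{gen}}\) belongs to a cut of \(\Vg\) with no \(\acl(Ac)\)-realisation. I expect that, because this fiber is a \(\k^\star\)-torsor with no \(\acl(Ac)\)-definable point, the map \(\tilde h'\) is forced to be \(\k^\star\)-invariant (hence constant) on it, reducing \(h(a)\) to \(\phi(v_{\mathrm{gen}})\) for some \(\acl(Ac)\)-definable \(\phi \colon \Vg \to \k\). Orthogonality of \(\k\) and \(\Vg\) (\cref{acl Vg}) then gives \(\phi(v_{\mathrm{gen}}) \in \k(\acl(Ac)) \subseteq \acl(Ac)\). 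The hard part will be this \(\k^\star\)-invariance step: one must leverage the absence of an \(\acl(Ac)\)-definable section of the short exact sequence \(1 \to \k^\star \to \RV^\star \to \Vg \to 0\) over the generic cut to force any definable \(\k\)-valued map to be constant on the torsor.
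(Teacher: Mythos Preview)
Your overall shape --- reduce to a map into \(\k\), apply Flenner's preparation, then analyse the induced map on \(\RV\) --- matches the paper's, but there is a real gap in your first reduction and a corresponding difference in how the paper organises the argument.

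\textbf{The gap.} You invoke \cref{uppertrian} to produce a triangular basis of \(R_s\) and claim the resulting isomorphism \(\red(R_s)\isom\k^r\) is \(\acl(Ac)\)-definable. But \cref{uppertrian} only yields a separated basis with entries in a (definably spherically complete) \emph{model}; there is no reason for the basis vectors to lie in \(\K(\acl(Ac))\), and in general \(\red(R_s)\) has no nonzero \(\acl(Ac)\)-definable point. Once the identification with \(\k^r\) is only \(M\)-definable, your \(h:U\to\k\) is only \(M\)-definable, the centres \(C\) from Flenner's preparation land in \(\K(M)\) rather than \(\acl(Ac)\), and your coordinate analysis of \(\rv_C(a)\) (which relies on \(C\subseteq\acl(Ac)\) so that the subballs around points of \(C\) are \(\acl(Ac)\)-definable) no longer goes through.

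\textbf{How the paper proceeds instead.} The paper embraces working over \(M\): it fixes a point of \(U(M)\), identifies \(\Lin_A\) with \(\k^n\) over \(M\), applies preparation over \(M\), and proves your ``hard part'' as a Claim about \(M\)-definable maps \(\RV\to\k\). The Claim is exactly your torsor idea, but carried out via an explicit automorphism: using that \(\k\) is algebraically closed (so \(\k^\star\) is divisible), one builds a group homomorphism \(\Vg\to\k^\star\) killing \(\Vg(M)\) and sending a chosen \(\gamma\notin\Vg(M)\) to any prescribed \(\alpha\in\k^\star\); twisting by this homomorphism gives an automorphism of \(\RV\) fixing \(\RV(M)\) and \(\k\) and acting transitively on the fibre over \(\gamma\). (Your phrasing ``absence of a definable section'' is not quite the operative hypothesis --- divisibility of \(\k^\star\) is.) This yields an \(M\)-definable closed ball \(b\subset U\) with \(f(U\setminus b)\) finite. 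The descent to \(Ac\) happens only at the very end: if two such \(b\) are disjoint then \(f(U)\) itself is finite; otherwise all such \(b\) form a chain whose intersection \(B\) is an \(Ac\)-definable proper generalised subball of \(U\), and the generic \(a\) avoids \(B\).

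So your steps 2--5 are morally the paper's argument, but run over the wrong base; the fix is to work over the model and add the intersection-of-balls descent at the end rather than trying to keep everything \(\acl(Ac)\)-definable from the start.
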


\begin{proof}
Let \(f : U \to \eq{\Lin_A}\) be some (relatively) $A$-definable function and
let \(\cut(U)\) be the cut of \(U\). Fix some \(e \in U(M)\). For now, we work
over \(M\), so we can assume that \(e = 0\) and we identify \(\eq{\Lin_A}\) with
\(\eq{\k}\). For every \(\gamma\in \cut(U)\) and \(d\in\eq{\k}\), let
\(X_{d,\gamma} = \{x\in U\mid \val(x) = \gamma\) and \(f(x) = d\}\). Then by
\cref{Hen prep}, there exists a finite set \(C\subseteq \K(M)\) which does not
depend on \(\gamma\) or \(d\), such that for every ball \(b\), if \(b\cap
C=\emptyset\), then \(b\cap X_{d,\gamma}\neq\emptyset\) implies that
\(b\subseteq X_{d,\gamma}\).

If \(a\in U\) is not in the smallest ball \(b\) containing \(C\cap U(M)\) and
\(0\), then the open ball of radius \(\val(a)\) around \(a\) --- that is
\(\rv(a)\) --- is entirely contained in \(X_{f(a),\val(a)}\). In other words,
\(f\) induces a well-defined function \(\overline{f} : \rv(U\setminus b) \to
\eq{\k}\).

\begin{claim}
Let \(f : \RV \to \eq{k}\) be \(M\)-definable. Then there are finitely many
\(\gamma_i\in\Vg(M)\) such that \(f(\{x\in \RV\mid \v(x)\neq \gamma_i\})\) is
finite.
\end{claim}

\begin{proof}
Let \(N\succ M\) be sufficiently saturated and homogeneous. For any choice of
\(\alpha \in\k\) and \(\gamma\in\Vg(N)\setminus\Vg(M)\), we find an automorphism
\(\sigma\in\aut(\RV(N)/\RV(M),\k(N))\) such that, if \(\v(x) = \gamma\), then
\(\sigma(x) = \alpha \cdot x\). First, as \(\k^\times\) is divisible and thus
injective, we find a group morphism \(h : \Vg\to\k^\times\) sending \(\gamma\)
to \(\alpha\) and \(\Vg(M)\) to \(0\). This \(h\) induces an automorphism
\(\sigma\in\aut(\RV(N)/\RV(M),\k(N))\) defined by \(\sigma(x) = h(\v(x)) \cdot x
\).

Let \(x,y\in \RV\) be such that \(\v(x) = \v(y)\nin\Vg(M)\). Then, by the above
paragraph, there is an automorphism \(\sigma\) fixing  \(\k\) and \(\RV(M)\),
and hence \(f\), and such that \(\sigma(x) = y\). It follows that \(f(x)
= \sigma(f(x)) = f(\sigma(x)) = f(y)\). By compactness, there are finitely many
\(\gamma_i\in\Vg(M)\) such that \(f\) induces a function \(\Vg\setminus\bigcup_i
\gamma_i \to \eq{\k}\). This function has finite image by orthogonality of
\(\k\) and \(\Vg\).
\end{proof}

Thus, we have found an \(M\)-definable closed ball \(b'\subset U\) such that
\(|f(U\setminus b')| = n <\infty\). By compactness, there exists an
\(A\)-definable \(Z\supseteq U\) such that \(|f(Z\setminus b')| = n\). If \(A =
M\), the proposition is proved. In general, we show that \(b'\) can be replaced
by a generalized \(A\)-definable ball. If there are two such \(M\)-definable
closed balls with empty intersection, then \(f(Z)\) is finite. If not, they form
a chain which is \(A\)-definable. Hence, their intersection is an
\(A\)-definable generalized sub-ball \(B\) of \(U\) such that \(f(U\setminus
B)\) is finite.

In both cases, if \(a\) is generic in \(U\) over \(A\), \(f(a)\) is in a finite
\(A\)-definable set. In other words, \(\eq{\Lin_A}(\dcl(Aa)) \subseteq \acl(A)\).
\end{proof}

\begin{proposition}
\label{growth acl Lin}
Let \(A\subseteq \eq{M}\) and let \(a\) be a tuple in \(\K(M)\). There is a
countable tuple $c \in \Lin_A(\dcl(Aa))$ such that
\[\eq{\Lin_{A}}(\dcl(Aa)) \subseteq \acl(Ac).\]
\end{proposition}

\begin{proof}
Let us first assume that \(|a| = 1\). Let $W=\{b \mid a \in b$ and $b$ is a
$A$-definable generalized ball$\}$. Then $a$ is generic over \(A\) in the
$\infty$-$A$-definable generalized ball
\[U=\bigcap_{b \in W}b.\]

If \(U\) is open, by \cref{algebraic}, we have \(\eq{\Lin_A}(\dcl(Aa)) \subseteq
\acl(A)\). If \(U\) is closed, let $c_0=\res_{U}(a) \in \eq{\Lin_A}(\dcl(Aa))$.
Let \(U_0\) be the intersection of all \(Ac_0\)-definable generalized balls
containing \(a\). Either \(U_0\) is open, or we set \(c_1 = \res_{U_0}(a)
\in \eq{\Lin_A}(\dcl(Aa))\). We continue this process unless \(U_i\) is open and we
set \(U_\omega = \bigcap_j U_j\). Then \(a\) is generic in the $Ac_{\geq
0}$-definable open generalized ball \(U_\omega\). By \cref{algebraic}, we have
$\eq{\Lin_A}(\dcl(Aa)) \subseteq \acl(A c_{\geq 0})$, concluding the proof.

Let us now assume that \(n > 1\) and proceed by induction. Let
\(d\in\eq{\Lin_A}(\dcl(Aa_{<n}))\) be such that \(\eq{\Lin_A}(\dcl(Aa_{<n}))
\subseteq \acl(Ad)\). By the case \(n=1\), let also \(c\in
\eq{\Lin_A}(\dcl(Aa))\) be such that \(\eq{\Lin_A}(\dcl(Aa)) \subseteq
\acl(Aa_{<n}c)\). For every \(e\in\eq{\Lin_A}(\dcl(Aa))\), there exists an
\(Aa_{<n}\)-definable one-to-finite correspondence \(f\) such that \(e \in
f(c)\). We have \(\code{f}\in\eq{\Lin_A}(\dcl(Aa_{<n})) \subseteq \acl(Ad)\) and
hence \(e \in \acl(Adc)\).
\end{proof}

\begin{lemma}
\label{germ int}
Let \(M\) be an \(\L\)-structure, let \(X\) and \(D\) be \(\L\)-definable sets
and let \(a\in X(N)\), for some \(N\succ M\). Assume there exists a countable
tuple \(c\) in \(M\) such that \(D(\dcl(Ma))\subseteq \acl(D(M)ac)\). Then, for
every \(\L\)-definable family \((f_\lambda)_{\lambda\in\Lambda} : X\to D\),
there exists an \(\L(M)\)-definable one-to-finite correspondence
\((g_\delta)_{\delta\in D^m} : X\to D\) such that, for every
\(\lambda\in\Lambda(M)\), there exists a \(\delta\in D^m(M)\) with
\(f_\lambda(a)\in g_\delta(a)\).

In particular, if \(p\in\TP(M)\) is definable, the interpretable set
\(\{\germ{p}{f_\lambda} \mid \lambda \in \Lambda\}\) is almost \(D\)-internal.
\end{lemma}

\begin{proof}
The existence of \(g\) follows by compactness, in a sufficiently saturated model
of the pair \((N,M)\). Now, if \(p\in\TP(M)\) is definable, for every
\(\lambda\in\Lambda(M)\), let \(Y_\lambda = \{\delta\in D^m\mid p(x) \vdash
f_\lambda(x) \in g_\delta(x)\}\). Then \(h(\germ{p}{f_\lambda}) =
\code{Y_\lambda}\) lies in an interpretable \(D\)-internal set. Moreover, since
\(p(x) \vdash f_\lambda(x) \in \bigcap_{\delta\in Y_\lambda} g_\delta(x)\) which
is finite, the map \(h\) is finite-to-one.
\end{proof}

\begin{proposition}
\label{germs intk}
Assume that \(\k\) is stable. Let $A \subseteq \eq{M}$ and let \(p(x)\in
\TP(M)\) be \(A\)-definable concentrating on \(\K^n\) for some \(n\). Let \(f\)
be an \(M\)-definable function. Assume that for every \(a\models p\), we have
\(f(a)\in\Lin_{Aa}\), then \(\germ{p}{f}\) lies in an almost \(\k\)-internal
\(A\)-definable set.
\end{proposition}

\begin{proof}
We may assume that \(A = \acl(A)\). As \(\k\) is stable, there exists a
\(\acl(Aa)\)-definable type \(q\) of bases for \(\Lin_{Aa}\). Let \(b\models
\restr{q}{Aa}\). Since \(\tp(\acl(Aa)/M)\) is \(A\)-definable, \(\tp(ab/M)\) is
also \(A\)-definable. Moreover, if \(g(ab)\in\k\) enumerates the coordinates of
\(f(a)\) in the basis \(b\), we have \(f(a) = h(b,g(ab))\) where \(h\) is
\(A\)-definable.

By \cref{growth acl Lin}, there exists a countable tuple \(c\) in \(M\) such
that \(\k(\dcl(Mab))\subseteq \acl(M \k(\dcl(abc)))\). As \(\k\) is stably
embedded, it follows that \(\k(\dcl(Mab))\subseteq \acl(\k(M)abc)\). Therefore, by
\cref{germ int}, the germ \(\germ{q}{g}\) lies in an \(A\)-interpretable almost
\(\k\)-internal set. Moreover, if \(\sigma\in\aut(M/A\germ{q}{g})\), then
\(\sigma(g)(ab) = g(ab)\) and hence
\[\sigma(f)(a) = h(b,\sigma(g(ab))) = h(c,g(ab)) = f(a).\] So \(\germ{p}{f}
\in\dcl(A\germ{q}{g})\) also lies in an \(A\)-interpretable almost
\(\k\)-internal set.
\end{proof}


\subsubsection{Dense value groups}\label{subsec: dense}

As previously, let \(M_0 = \alg{M}\) and \(M_1 = \ur{M}\).

\begin{lemma}
\label{dcl1 M}
We have \(\G(\dcl_1(M)) = \G(M)\).
\end{lemma}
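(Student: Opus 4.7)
The inclusion $\G(M) \subseteq \G(\dcl_1(M))$ is immediate via \cref{identification}. For the converse, the plan is to apply \cref{lift G} with base $A = M$ in the ambient $\L_1$-structure on $M_1$, and then use the coding of $\Tor$ in $\K \cup \Mod$ from the remark following \cref{code mod G}.

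I would first observe that $\Gal(M_1/M)$ embeds into $\aut_1(M_1/M)$: since $M_1 = \ur{M}$ is an unramified extension of $M$, every valued-field automorphism of $M_1$ over $M$ fixes $\Vg(M_1) = \Vg(M)$ pointwise and therefore preserves the full induced $\Vg$-structure. By Galois theory, this already gives $\K(\dcl_1(M)) \subseteq \K(M_1)^{\Gal(M_1/M)} = \K(M)$.

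I would then verify the two hypotheses of \cref{lift G}. Hypothesis (1) holds trivially since $\Vg(M_1) = \Vg(M)$ and the map $v_\Delta \colon \K(M)^\times \twoheadrightarrow \Vg(M)/\Delta$ is surjective for every definable convex subgroup $\Delta$. The substantive step is hypothesis (2): every $b \in \gBalls(\dcl_1(M))$ meets $\K(M)$. The strategy is Galois averaging. Pick any $a \in b(M_1) \subseteq \ur{\K(M)}$; as $a$ is algebraic over $M$, its finite $\Gal(M_1/M)$-orbit $\{a_1,\dots,a_n\}$ lies entirely in $b$ by $\aut_1(M_1/M)$-invariance of $b$. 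The average $\bar a = \tfrac{1}{n}\sum_i a_i$ is $\Gal(M_1/M)$-invariant, hence lies in $\K(M)$; since $b$ is an additive coset of some $\O$-submodule of $\K$ and $1/n \in \O$ in equicharacteristic zero, one also has $\bar a \in b$.

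\cref{lift G} then yields $\Mod_c(\dcl_1(M)) \subseteq \mu_c(\B_n(\K(M)))$, which by \cref{identification} coincides with $\Mod_c(M)$; in particular $\Lat(\dcl_1(M)) \subseteq \Lat(M)$. Finally, any $\bar a \in \Tor(\dcl_1(M))$ is interdefinable with some module in $\Mod(\dcl_1(M)) \subseteq \Mod(M)$, so $\bar a \in \Tor(M)$. The main obstacle is the averaging step in hypothesis (2); its crucial inputs are that generalized balls are additive cosets of $\O$-submodules of $\K$ and that $1/n \in \O$ in equicharacteristic zero.
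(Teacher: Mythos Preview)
Your proof is correct and follows essentially the same approach as the paper: verify the hypotheses of \cref{lift G} (applied in $M_1$ with $A = M$) via a Galois-averaging argument to find $\K(M)$-points in $\dcl_1(M)$-definable generalized balls, and then deduce the result for $\Tor$ from its interdefinability with $\Mod$. Your version is more explicit than the paper's about why $\Gal(M_1/M)$ acts by $\L_1$-automorphisms (the paper compresses this into the phrase ``since $A$ is henselian'') and about the separate treatment of $\K$ and $\Tor$, but the substance is the same.
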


\begin{proof}
Let \(A = \K(M)\). Note that \(\Vg(M_1) = \val(A)\). Also, any
\(\L_1(A)\)-definable ball \(b\) contains a point in \(a \in \K(M_1) \subseteq
\alg{A}\). Since \(A\) is henselian, the Galois-conjugates of \(a\) in \(M_1\)
over \(A\) are all in the generalized ball \(b\) and their mean \(d\) is fixed
by \(\Gal(M_1/A)\). Since the extension \(A \leq \K(M_1)\) is normal, \(d\in
A\). So we can apply \cref{lift G} (in \(M_1\)) to see that
\(\G(\dcl[1](M))\subseteq \bigcup_{c\in\Cut}\mu_c(\B_n(A)) = \G(M) \subseteq
\G(\dcl[1](M))\).
\end{proof}

If we further assume that the value group is dense, what we have done so far is
enough to show that germ of \(\L_0\)-definable open balls are coded in the
linear part.

\begin{lemma}
\label{code germs dense}
Assume \(\Vg(M)\) is dense. Let $A \subseteq \G(M)$ and let \(a\) be  a tuple of
\(\K\)-points in \(N\succ M\) be such that \(p = \tp_0(a/M)\) is
\(\L(A)\)-definable. Let \(b(a)\) be an open \(\L_0(Ma)\)-definable ball whose
radius is in \(\Vg(\dcl_1(Aa))\). Then, in the structure \(M_1\), the germ
\(\germ{p}{b}\) is coded in \(\G(\acl(A))\cup\Lin_A(M)\) over \(A\).
\end{lemma}

\begin{proof}
By \cref{code type} and property $\D$, we may assume that \(tp_1(a/M_1)\) is
\(\L_1(\acl_1(A))\)-definable. We have \(b(a) \in \Lin_{Aa}\), so, by
\cref{germs intk} applied in \(M_1\), the germ \(\germ{p}{b}\) lies in an
\(\L_1(A)\)-definable \(\k\)-internal set. On the other hand, by \cref{code
germs balls}, it is coded by some \(e\in\G(M_1)\) over \(A\). It now follows
from \cref{almost k int} that \(e\in\G(\acl_1(A))\cup \Lin_{\acl_1(A)}(M_1)\).
Since \(e \in\dcl_1(M)\), by \cref{dcl1 M}, we have \(e\in\G(M)\) and hence
\(e\in \G(\acl(A)) \cup \Lin_A(M)\).
\end{proof}

\subsubsection{Discrete value groups}\label{subsec: discrete}

We now assume that \(\Vg(M)\) is a pure, discrete ordered abelian group of
bounded regular rank. We add a constant \(\pi\) for a uniformizer in \(M\). We
introduce \(M_1' = M_1[\pi^{1/\infty}]\) the extension of \(M_1\) obtained by
adding \(n\)-th roots of \(\pi\) for all \(n > 0\). We assume the language
\(\L'_1\) of \(M_1'\) is morleyized and we restrict ourselves to quantifier free
\(\L'_1\)-formulas when interpreting them in a substructure. We write
\(\acl'_{1}\) and \(\dcl'_{1}\) to indicate the algebraic and definable closure
in $M_{1}'$.

\begin{lemma}
\label{lem: cuts are fine}
The definable convex subgroups of \(\Vg(M_1')\) are exactly the convex hulls of
definable convex subgroups of \(\Vg(M_1)\) and \(\Vg(M_1')\) has bounded regular
rank. Furthermore, the definable cuts in \(\Vg(M_1')\) are exactly the upward
closures of definable cuts in \(\Vg(M_1)\) and the cuts above or below a point
of \(\Vg(M_1')\).
\end{lemma}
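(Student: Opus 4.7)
The plan is to analyze the ordered-group extension $\Vg(M_1)\hookrightarrow \Vg(M_1')$ explicitly. Since $M_1 = \ur{M}$ is unramified over $M$, $\Vg(M_1) = \Vg(M)$ is discrete with $\val(\pi)$ as least positive element. Adjoining $\pi^{1/n}$ in a henselian equicharacteristic zero valued field adds exactly $\val(\pi)/n$ to the value group, so $\Vg(M_1') = \Vg(M_1) + \Qq\val(\pi)$, with $\Vg(M_1) \cap \Qq\val(\pi) = \Zz\val(\pi)$. These identifications give a canonical order-isomorphism
\[\Vg(M_1')/\Qq\val(\pi) \isom \Vg(M_1)/\Zz\val(\pi),\]
sending $g + \Qq\val(\pi)$ to $g + \Zz\val(\pi)$ for $g \in \Vg(M_1)$. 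This is the backbone for both parts.

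For the first assertion, I would first note that $\Qq\val(\pi)$ is the smallest non-trivial convex subgroup of $\Vg(M_1')$: any nonzero $g\in \Vg(M_1')$ satisfies $|g| \geq \val(\pi)/n$ for some $n$, and so its convex hull contains all of $\Qq\val(\pi)$. Every non-trivial convex subgroup of $\Vg(M_1')$ therefore contains $\Qq\val(\pi)$, and quotienting yields a bijection with the convex subgroups of $\Vg(M_1')/\Qq\val(\pi) \isom \Vg(M_1)/\Zz\val(\pi)$; unwinding, this matches a non-trivial convex subgroup $\Delta \leq \Vg(M_1)$ with its convex hull $\Delta + \Qq\val(\pi) \leq \Vg(M_1')$. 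Definability is preserved because in an ordered abelian group of bounded regular rank each definable convex subgroup is $\emptyset$-definable --- distinguished by the archimedean layer at which it sits --- so the bijection matches definable with definable. Bounded regular rank of $\Vg(M_1')$ is then immediate from that of $\Vg(M_1)$.

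For the second assertion, let $C$ be a definable cut in $\Vg(M_1')$. If the boundary of $C$ is realized at some point $g \in \Vg(M_1')$, then $C = \{x > g\}$ or $\{x \geq g\}$ and we are in the second case. Otherwise I would argue $C$ is $\Qq\val(\pi)$-invariant by examining cosets: for each $h \in \Vg(M_1)$, the intersection $C \cap (h + \Qq\val(\pi))$ is a definable cut of the archimedean piece $h + \Qq\val(\pi) \isom \Qq$, which is pure divisible and o-minimal; hence this intersection is empty, the whole coset, or a rational half-line $\{x \geq h + q\val(\pi)\}$ or $\{x > h + q\val(\pi)\}$, and the last option would furnish a boundary point in $\Vg(M_1')$, contradicting our case. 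Once $\Qq\val(\pi)$-invariant, $C$ descends to a definable cut in $\Vg(M_1')/\Qq\val(\pi)$, the canonical isomorphism transfers it to a definable $\Zz\val(\pi)$-invariant cut $C_0$ in $\Vg(M_1)$, and a direct check on representatives $x = g + q\val(\pi)$ shows $C$ is the upward closure of $C_0$ in $\Vg(M_1')$.

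The main obstacle is to justify that the canonical isomorphism $\Vg(M_1')/\Qq\val(\pi) \isom \Vg(M_1)/\Zz\val(\pi)$ really is an isomorphism of induced structures, so that definable sets on each side correspond. This should follow from the fact that both $\Vg(M_1)$ and $\Vg(M_1')$ are pure ordered abelian groups in their respective ambient valued fields, so the structure induced on a quotient by a definable convex subgroup is itself pure OAG structure and depends only on the abstract isomorphism type. The same purity underlies the o-minimality of the archimedean piece $\Qq\val(\pi)$ that is used in the cut analysis.
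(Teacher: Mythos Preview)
Your argument is correct and takes a different route from the paper's. Both start from $\Vg(M_1') = \Vg(M_1) + \Qq\val(\pi)$, but the paper then works with the $n$-regular filtrations directly: it shows that $\overline{\Delta}_i := \Qq\val(\pi) + \Delta_i$ give $\Vg(M_1')$ the same $n$-regular rank as $\Vg(M_1)$ and invokes Farre's characterisation to conclude bounded regular rank and definability of the $\overline{\Delta}_i$; for cuts it cases on the stabiliser $\Delta_{S'}$ and, in the trivial-stabiliser case, appeals to the explicit cut classification \cite[Proposition~3.3]{Vic-EIOAG} to produce a boundary point. Your approach instead routes everything through the single isomorphism $\Vg(M_1')/\Qq\val(\pi) \isom \Vg(M_1)/\Zz\val(\pi)$ together with the o-minimality of the bottom archimedean piece $\Qq\val(\pi)$; this is more self-contained and avoids both external citations, at the price of having to check that the quotient isomorphism is an isomorphism of induced structures --- a point you correctly isolate as the crux and which does follow from purity of $\Vg$ in both ambient valued fields and the relative quantifier elimination for bounded regular rank groups. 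One small expository wrinkle: you appeal to $\emptyset$-definability of convex subgroups in $\Vg(M_1')$ before having established its bounded regular rank, which is what underwrites that fact; it would be cleaner to first deduce bounded regular rank of $\Vg(M_1')$ from the quotient isomorphism and the divisibility of $\Qq\val(\pi)$, and only then read off the correspondence of definable convex subgroups.
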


\begin{proof}
Fix some \(n\in\omega\). Since $\Vg(M)$ has bounded regular rank, for each $n$
there is a finite sequence of convex subgroups $0< \Delta_{1} < \dots<
\Delta_{k}=\Vg(M)$ such that $\Delta_{i+1}/\Delta_{i}$ is $n$-regular. Note that
$\Vg(M_{1}')=\mathbb{Q} v(\pi)+ \Vg(M)$ and $\overline{\Delta}_{i}=\mathbb{Q} v(\pi)+
\Delta_{i}$ is also a convex subgroup of $\Vg(M_{1}')$. Then $\overline{\Delta}_{1}$
is $n$-divisible and $\overline{\Delta}_{i+1}/ \overline{\Delta}_{i}$ is $n$-regular as it
is isomorphic to $\Delta_{i+1}/ \Delta_{i}$. Consequently, for each $n< \omega$,
$\Vg(M_{1}')$ has the same $n$-regular rank than $\Vg(M)$, thus by
\cite[Proposition~2.3]{Farre} $\Vg(M_{1}')$ is of bounded regular rank and each
$\overline{\Delta}_{i}$ is definable in $\Vg(M_{1}')$. Furthermore, the map $\Delta$
to $\overline{\Delta}$ is a one to one correspondence between the convex subgroups of
$\Vg(M)$ and $\Vg(M_{1}')$.

Let $S \subseteq \Vg(M)$ be a definable cut and $\Delta_{S}= \{ \gamma \in
\Vg(M)\mid \gamma+S=S\}$. By \cite[Fact~3.2]{Vic-EIOAG} $\Delta_{S}$ is a convex
definable subgroup of $\Vg(M)$, and it is the maximal convex subgroup such that
$S$ is a union of $\Delta_{S}$-cosets. If \(\Delta_S = \{0\}\), then there
exists a \(\gamma\in S\) such that \(\gamma -\val(\pi) \nin S\). It follows that
\(S\) is the cut below \(\gamma\) and so is its upwards closure in
\(\Vg(M_1')\). If \(\Delta_S \neq\{1\}\), then \(S\) can be identified with a
subset of \(\Vg(M)/\Delta_S\) which is isomorphic to
\(\Vg(M_1')/\overline{\Delta}_S\) and hence the upwards closure of \(S\) in
\(\Vg(M_1')\) is definable.

Conversely, let $S' \subseteq \Vg(M'_1)$ be a definable cut and
$\overline{\Delta}_{S'}= \{ \gamma \in \Vg(M_1')\mid \gamma+S'=S'\}$. If
\(\overline{\Delta}_{S'} \neq\{0\}\), then, as above, \(S'\) is the upward
closure of \(S'\cap\Vg(M)\) which is definable. If \(\overline{\Delta}_{S'} =
\{0\}\), then, by \cite[Proposition~3.3]{Vic-EIOAG}, \(S'\) is of the form \(n x
\square \beta\) for some \(\beta \in \Vg(M_1')\) and \(\square \in \{>,\geq\}\).
Growing \(n\), we may assume that \(\beta\in\Vg(M)\). Moreover, since
\(\overline{\Delta}_{S'} = \{0\}\), for some \(\gamma\in S'\), \(\gamma -
\v(\pi) \nin S'\). As \((\gamma-\v(\pi),\gamma] \cap \Vg(M) \neq \emptyset\), we
may assume that \(\gamma\in\Vg(M)\). Then \(\beta = n\gamma - i\v(\pi)\), for
some \(i\), and \(S'\) is the cut above or below \(\gamma - n^{-1}i\v(\pi)\).
\end{proof}

It follows, by quantifier elimination in bounded regular rank group
(\cite[Theorem~2.17]{Vic-EIOAG}), that \(M_1'\prec \ur{\MM}[\pi^{1/\infty}]\).
Also, we can naturally identify the set \(\G(M_1)\) with a subset of
\(\G(M_{1}')\). We do, however, have to code the imaginaries of \(M\) that
\(M_1'\) believes to be geometric:

\begin{lemma}\label{dcl1' M}
Let \(R \in \Lat(\dcl[1]'(M_1))\).
\begin{enumerate}
\item There is a \(Q \in \Lat(M_1)\) such that $R \in \dcleq[1]'(Q)$ and $Q$ is
definable from \(R\) in the pair \((M_1',M_1)\).
\item For every \(e\in \red(R)(\dcl_1'(M_1))\), there exists \(\epsilon \in
\red(Q)(M_1)\) such that $e \in \dcleq[1]'(\epsilon)$ and $\epsilon$ is
definable from \(e\) in the pair \((M_1',M_1)\).
\end{enumerate}
\end{lemma}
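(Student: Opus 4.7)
My plan is to descend the module $R$ from $M_1'$-definable over $M_1$ to $M_1'$-definable over $M$, by combining Proposition~\ref{uppertrian} (the structure theorem for definable modules in the definably spherically complete $M_1'$), Lemma~\ref{lem: cuts are fine} (the classification of definable cuts in $\Vg(M_1')$), and Lemma~\ref{dcl1 M} (descent in the unramified direction $M \hookrightarrow M_1$).

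For part (1), I first apply Proposition~\ref{uppertrian} inside $M_1'$ to write $R = \sum_i \I_{c_i}(M_1') a_i$ for some triangular basis $(a_i) \subseteq \K(M_1')^n$ and cuts $c_i \in \dcl[1]'(\code{R}) \subseteq \dcl[1]'(M_1)$. Since $\Vg(M_1') = \Vg(M) + \Qq\v(\pi)$, Lemma~\ref{lem: cuts are fine} decomposes each $c_i$ canonically as $c_i = d_i + \lambda_i \v(\pi)$ with $d_i \in \Cut(M)$ and $\lambda_i \in (-1,0] \cap \Qq$, yielding $\I_{c_i}(M_1') = \pi^{\lambda_i}\I_{d_i}(M_1')$. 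Setting $\tilde{a}_i := \pi^{\lambda_i} a_i$, I obtain $R = \sum_i \I_{d_i}(M_1') \tilde{a}_i$, a module of type $d = (d_i) \in \Cut(M)^n$. Since $d \in \Cut(M)^n$, the stabiliser $\Stab(\Can_d)$ is $\emptyset$-definable over $M$, and Lemma~\ref{dcl1 M} (now applicable, since the ramification has been absorbed into the $\pi^{\lambda_i}$) produces a representative $g \in \B_n(M)$ of the coset $\code{R} \in \Mod_d(M_1') = \B_n(M_1')/\Stab(\Can_d)(M_1')$. I set $Q := g \cdot \Stab(\Can_d)(M) \in \Mod_d(M) \subseteq \Lat(M)$. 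The analog of Remark~\ref{identification} for the extension $M \hookrightarrow M_1'$, composed with the $\emptyset$-definable rescaling $(\pi^{\lambda_i})$, reconstructs $R$ from $Q$; this gives $R \in \dcl[1]'(Q)$. Conversely, $Q$ is recovered from $R$ by inverting this construction, an operation definable in the pair $(M_1', M)$.

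For part (2), given $e \in \red(R)(\dcl[1]'(M))$, I lift $e$ to an element $x \in R$ and apply the descent from part (1) at the level of elements: after the ramification rescaling, $x$ corresponds to some $x' \in Q$ whose image under the $M$-definable map $\red(Q) \to \red(R)$ induced by $Q \hookrightarrow R$ (after rescaling) equals $e$. Setting $\epsilon := \red_Q(x') \in \red(Q)(M)$, the commutativity of this diagram yields $e \in \dcl[1]'(\epsilon)$, and $\epsilon$ is definable from $e$ and $Q$ in the pair $(M_1', M)$.

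The main obstacle is the descent of the coset of $R$ from $\Mod_d(M_1')$ to $\Mod_d(M)$ after cut-alignment, i.e., establishing the analog of Remark~\ref{identification} for the full extension $M \hookrightarrow M_1'$. This factors through the unramified step $M \hookrightarrow M_1$ (handled by Lemma~\ref{dcl1 M}) and the totally ramified step $M_1 \hookrightarrow M_1'$; in the latter, $\Stab(\Can_d)(M_1')$ must absorb the fractional powers of $\pi$ produced by the rescaling, which works because $d \in \Cut(M)^n$ and the offending factors lie in $\O(M_1')^\times \subseteq \Stab(\Can_d)(M_1')$ modulo $\emptyset$-definable data. Checking that these two absorptions interact correctly, and that the resulting $Q$ satisfies both the $\dcl[1]'(Q)$-definability of $R$ and the pair-definability of $Q$ from $R$ (together with the analogous statement at the $\red$ level), constitutes the technical heart of the proof.
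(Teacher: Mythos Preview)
Your approach has a genuine gap at the step where you invoke Lemma~\ref{dcl1 M} to descend the coset of $R$ from $\Mod_d(M_1')$ to $\Mod_d(M)$. That lemma only handles the \emph{unramified} step $M \hookrightarrow M_1$; there is no analogue for the totally ramified step $M_1 \hookrightarrow M_1'$, and in fact the analogue is false. Concretely, take $\Vg(M)=\Zz$, $\varpi=\pi^{1/2}$, and $R=\O(M_1')\,(1,0)+\O(M_1')\,(0,\varpi)\subseteq\K(M_1')^2$. This $R$ is $\Aut(M_1'/M_1)$-invariant, hence lies in $\Lat(\dcl_1'(M_1))$, and after your rescaling it has type $d=(0^-,0^-)\in\Cut(M)^2$. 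But its class in $\Mod_d(M_1')$ is represented by $\bigl(\begin{smallmatrix}1&0\\0&\varpi\end{smallmatrix}\bigr)$, and any representative in the same $\B_2(\O(M_1'))$-coset must have $(2,2)$-entry of valuation $1/2\notin\Vg(M_1)$. So there is no $g\in\B_n(M)$ (or even $\B_n(M_1)$) in this coset, and your construction of $Q$ breaks down. A related failure already occurs in your cut-alignment step: if some $c_i$ is a strict cut $\gamma^+$ in the dense group $\Vg(M_1')$, then $\I_{c_i}(M_1')$ is an \emph{open} ball, whereas every $\pi^{\lambda}\I_{d}(M_1')$ with $d\in\Cut(M)$ is a \emph{closed} ball (since $\Vg(M)$ is discrete), so the identity $\I_{c_i}(M_1')=\pi^{\lambda_i}\I_{d_i}(M_1')$ cannot hold.

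The paper sidesteps both issues by \emph{not} trying to keep the rank of $Q$ equal to that of $R$. Instead it performs a restriction of scalars: after reducing to $M=M_1$ via Lemma~\ref{dcl1 M}, it picks $n$ with $R\in\mu_c(\B_m(\K(M)[\varpi]))$ for $\varpi^n=\pi$, uses the $\K(M)$-linear isomorphism $f_\varpi:\K(M)^n\to\K(M)[\varpi]$, $a\mapsto\sum_{i<n}a_i\varpi^i$, and sets $Q(M)=f_\varpi^{-1}\bigl(R(\K(M)[\varpi])\bigr)\subseteq\K(M)^{nm}$. Because $\val(f_\varpi(a))=\min_i\bigl(\val(a_i)+i\,n^{-1}\val(\pi)\bigr)$, this $Q$ is automatically an $\L(M)$-definable $\O(M)$-submodule; independence of the choice of $\varpi$ is checked directly via the Galois action, and the linear surjection $f_\varpi$ then exhibits $R\in\dcl_1'(Q)$. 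The point is that enlarging the ambient dimension absorbs the ramification without ever needing to realise a fractional valuation inside $\B_n(M)$.
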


\begin{proof}
For some \(n\geq 1\), we have \(R \in \mu_c(\B_m(\K(M_1)[\varpi]))\), where \(c\)
is a tuple in \(\aCut\) and \(\varpi^n = \pi\). Let \(f_\varpi : \K(M_1)^n \to
\K(M_1)[\varpi]\) send \(a\) to \(\sum_{i<n} a_i \varpi^i\). Then for every \(a\in
\K(M_1)^n\), \(\val(f_\varpi(a)) = \min_i \val(a_i) + n^{-1} i \val(\pi)\). It
follows that the pre-image of \(R(\K(M_1)[\varpi])\) by \(f_{\varpi}\) is an
\(\L(M_1)\)-definable \(\O\)-submodule \(Q(M_1)\) with \(Q\in \Lat(M_1)\). If
\(\varpi'\) is another \(n\)-th root of \(\pi\), then \(\varpi' =
\sigma(\varpi)\) for some \(\sigma\in\aut(M_1'/M_1)\). Then, since \(\sigma(R) =
R\), we have \[f_{\varpi'}^{-1}(R(\K(M_1)[\varpi'])) =
\sigma(f_{\varpi}^{-1}(R(\K(M_1)[\varpi]))) = \sigma(Q(M_1)) = Q(M_1).\] So \(Q\) does
not depend on the choice of \(\varpi'\) and it is definable from \(R\) in the
pair \((M_1',M_1)\). Also, since \(f_\varpi\) is linear, it induces a surjective
map \(Q(M_1') \to R(M_1')\), whose image does not depend on \(\varpi\). It
follows that \(R\in\dcl[1]'(Q)\).

Let us now consider some \(e\in \red(R)(\dcl_1'(M_1))\). Growing \(n\), we may
assume that \(e\in \red(R)(\K(M_1)[\varpi])\). Let \(\epsilon\) be the pre-image of
\(e\) under the bijection \(\red(Q)(M_1)\to\red(R)(\K(M_1)\varpi)\) induced by
\(f\). As above, \(\epsilon\) does not depend on the choice of \(\varpi\) and it
has the required properties.
\end{proof}

We can now prove a variant of \cref{code type} :

\begin{lemma}
\label{ext type M1}
Let \(A = \dcl(A)\subseteq \eq{M}\) and let \(a\in N\succ M\) be such that
\(\tp_0(a/M)\) is \(\L(A)\)-definable. Then \(\tp_0(a/M)\) has a unique
extension to \(\TP^0(M_1')\) and this extension is \(\L_1'(\G(A))\)-definable.
\end{lemma}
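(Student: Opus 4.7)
The plan is to adapt the proof of \cref{code type}, with $M_1$ replaced by $M_1'$. Uniqueness is immediate: \cref{code type} provides a unique extension $q_0 \in \Tp^0(M_0)$ of $\tp_0(a/M)$, and since $M \subseteq M_1' \subseteq M_0$, the type $\tp_0(a/M_1')$ is simply the restriction of $q_0$ to $M_1'$ and is thereby determined.

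For definability, I would fix $d \geq 0$ and consider the $\L_1'(M_1')$-definable valuation $v_{1'}$ on $V_d^{1'} = \K(M_1')[x]_{\leq d}$ defined from $\tp_0(a/M_1')$ exactly as in the proof of \cref{code type}. By \cref{uppertrian} applied to $v$ in $M$, there is a separated basis $(P_i)_{i \leq \ell}$ of $V_d$ for $v$ with $\gamma_i = v(P_i) \in A$. Following the argument of \cite[Claim~3.3.5]{HilRid-EIAKE}, I would show that $(P_i)_i$ remains a separated basis of $V_d^{1'}$ for $v_{1'}$. Granted this, the level modules $R_\gamma^{1'} = \{Q \in V_d^{1'} \mid v_{1'}(Q) \geq \gamma\}$ form an $\L_1'(\G(A))$-definable family in $\gamma \in \Vg(M_1')$: for $\gamma \in \Vg(M)$, the separated basis yields $R_\gamma^{1'} = \O(M_1') \cdot R_\gamma$ with $R_\gamma = \{Q \in V_d \mid v(Q) \geq \gamma\} \in \Lat(A)$ by \cref{code mod G}, whose code coincides with that of $R_\gamma^{1'}$ via the obvious analog of \cref{identification} for the extension $M \hookrightarrow M_1'$; for arbitrary $\gamma \in \Vg(M_1')$, fixing some $\gamma_0 \in \Vg(A)$ and any $c \in \K(M_1')$ with $\val(c) = \gamma - \gamma_0$ gives $R_\gamma^{1'} = c R_{\gamma_0}^{1'}$ independently of the choice of $c$. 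Hence $v_{1'}$ is $\L_1'(\G(A))$-definable.

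I would finish by invoking \cref{EQ Vg} in $M_1'$ (whose residue field is algebraically closed, since $\k(M_1') = \k(M_1)$ is): any $\L_0$-formula $\phi(x, y)$ is equivalent to $\psi(\val(P_1(x, y)), \ldots, \val(P_k(x, y)))$ for some $P_i \in \Zz[x, y]$ and some $\psi$ in the pure ordered-group language, so that $\phi(x, b) \in \tp_0(a/M_1')$ iff $\psi(v_{1'}(P_1(x, b)), \ldots, v_{1'}(P_k(x, b)))$, a condition uniformly $\L_1'(\G(A))$-definable in $b$ by the preceding paragraph. The main obstacle I foresee is the separated-basis extension step, namely ruling out cancellation in the identity $v_{1'}(\sum_i \lambda_i P_i) = \min_i(\val(\lambda_i) + \gamma_i)$ for $\lambda_i \in \K(M_1')$; the key input should be the description $\Vg(M_1') = \Qq\val(\pi) + \Vg(M)$ made explicit in \cref{lem: cuts are fine}, as the fractional values $(j/n)\val(\pi)$ for $1 \leq j < n$ introduced by adjoining $\pi^{1/n}$ lie in fresh cosets of $\Vg(M)$ inside $\Vg(M_1')$, so that the separation witnessed by the $\gamma_i \in \Vg(M)$ is preserved under the extension.
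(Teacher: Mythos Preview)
Your overall strategy matches the paper's: uniqueness from the unique extension to $M_0$, extension of the separated basis via \cite[Claim~3.3.5]{HilRid-EIAKE}, and definability of $\tp_0(a/M_1')$ from definability of the induced valuation $v_{1'}$ on each $V_d^{1'}$.

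The gap is in the step you treat as routine. You assert $R_\gamma^{1'} = \O(M_1') \cdot R_\gamma$ and that their codes agree via ``the obvious analog of \cref{identification}'', but this is exactly where the work lies: since $\Vg(M)\neq\Vg(M_1')$, the module types change when passing to $M_1'$, and one must verify that the relevant cuts extend definably. The paper does this explicitly by introducing the cuts $C_{i,j}=\{\gamma\in\Vg\mid v(P_i)+\gamma\geq v(P_j)\}$ and invoking \cref{lem: cuts are fine}: if the stabilizer $\Delta$ of $C_{i,j}$ is nontrivial, then $\Vg/\Delta(M)=\Vg/\Delta(M_1')$ makes $C_{i,j}(M)$ coinitial in $C_{i,j}(M_1')$; if the stabilizer is trivial, discreteness of $\Vg(N)\succ\Vg(M)$ forces $\gamma_j-\gamma_i$ to equal the minimal element $\gamma_{i,j}\in\Vg(M)$ of $C_{i,j}$, so $C_{i,j}(M_1')$ is the principal cut at $\gamma_{i,j}$. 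This dichotomy is what makes the identification of codes go through, and you have not supplied it.

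You have also misplaced the obstacle. The separated-basis extension is handled directly by the cited claim and does not require the description $\Vg(M_1')=\Qq\val(\pi)+\Vg(M)$; and your parenthetical ``the $\gamma_i\in\Vg(M)$'' is in general false, since $\gamma_i=v(P_i)=\val(P_i(a))$ lies in $\Vg(N)$, not in $\Vg(M)$. The place where \cref{lem: cuts are fine} is genuinely needed is the cut-extension step you glossed over.
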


\begin{proof}
The uniqueness follows from \cref{code type} --- in fact, there is a unique
extension to \(M_0\). Let \(d \geq 0\), \(V_d = \K[x]_{\leq d}\) and \(v\) be
the valuation on \(V_d\) defined by \(v(P)\leq v(Q)\) if \(\v(P(a))\leq
\v(Q(a))\). By \cref{uppertrian}, the space \(V_d(M)\) admits a separated basis
\((P_i)_{i\leq \ell} \in V_d(M)\). By \cite[Claim~3.3.5]{HilRid-EIAKE}, it is
also a separated basis of \(V_d(M_1')\).

For every \(i,j\), let \(C_{i,j} = \{\gamma\in \Vg \mid v(P_i) + \gamma
v(P_j)\}\). If the stabilizer \(\Delta\) of \(C_{i,j}\) is not \(0\), then,
since \(\Vg/\Delta(M) = \Vg/\Delta(M_1')\) (by \cref{lem: cuts are fine}),
\(C_{i,j}(M)\) is co-initial in \(C_{i,j}(M_1')\) which is indeed definable. If
this stabilizer is \(0\), since \(\Vg(M_1)\) is discrete, \(C_{i,j}\) has a
minimal element \(\gamma_{i,j}\in\Vg(M)\) and \(v(P_j) = v(P_i)+\gamma_{i,j}\).
So \(v\) is indeed definable in \(M_1'\). Moreover, the \(\O_1'\)-module
\(R_i(M_1') = \{P\in V_d(M_1')\mid v(P) \geq v(P_i)\}\) is the \(\O_1'\)-module
generated by \(R_i(M) = \{P\in V_d(M)\mid v(P) \geq v(P_i)\}\) whose codes we
identify as in \cref{code type} via the natural inclusion map.
\end{proof}

We can now recover the equivalent of \cref{code germs dense} in the case of a
discrete value group:

\begin{lemma}
\label{code germs discrete}
Let $A \subseteq \G(M)$ and let \(a\) be a tuple of \(\K\)-points in \(N\succ
M\) such that \(p = \tp_0(a/M)\) is \(\L(A)\)-definable. Let \(b(a)\) be an open
\(\L_0(Ma)\)-definable ball whose radius is in \(\Vg(\dcl_{1}'(Aa))\). Then, in
the structure \(M_1\), the germ \(\germ{p}{b}\) is coded in
\(\G(\acl(A))\cup\Lin_A(M)\) over \(A\).
\end{lemma}

\begin{proof}
Given \cref{code type,dcl1 M}, we may assume that \(M=M_1\). Growing \(M_1\), we
may also assume that \(M_1\) is sufficiently saturated and homogeneous. By
\cref{ext type M1}, the type \(\tp_0(a/M_1')\) is \(\L_1'(A)\)-definable. Now,
applying \cref{code germs dense} in \(M_1'\), the germ \(\germ{p}{b}\) is coded
in \(\G(\acl_1'(A))\cup\Lin_{\acl_1'(A)}(M_1')\) over \(A\). In other words,
there are some tuple \(t\) in \(\K(\acl_1'(A))\cap \dcl_1'(M_1) = \K(\acl(A))\),
some \(R\in \Lat(\acl_1'(A))\cap \dcl_1'(M_1)\) and some
\(e\in\red(R)(\dcl_1'(M_1))\) which code \(\germ{p}{f}\) over \(A\). Let \(Q\) and
\(\epsilon\) be as in \cref{dcl1' M}. Now, any automorphism of
\(\sigma\in\aut(M_1/A)\) (extended in any way to \(M_1'\)) fixes \(Q\) if and only
if it fixes \(R\) --- so \(Q \in \Lat(\acl_1(A))\) --- and \(\sigma\) fixes
\(\germ{p}{f}\) if and only if it fixes \(t\), \(R\) and \(e\), if and only if
it fixes \(t\),\(Q\) and \(\epsilon\).
\end{proof}

\subsection{Invariant resolutions}

Let \(M\) be as in \cref{invariantcompletions}. As before, let \(M_0 = \alg{M}\)
and \(M_1 = \ur{M}\). Given a subset \(A\) of \(\G\), our goal is now to find a
subset \(C\) of \(\K\) whose type over \(M\) is invariant over \(A\) and some
stably embedded set. This will follows from the following lemma:

\begin{lemma}
\label{key}
Let \(N\succ M\), let $D \subseteq M$ be potentially large, let \(a\) be a tuple
in \(\K(N)\) and let $\rho$ be a pro-\(\L_1(M)\)-definable map. Assume that
$\rv(M(a)) \subseteq \dcl_1(D \rho(a))$ and that $p_1 = \tp_1(a/M)$ and
$\germ{p_1}{\rho}$ are $\Aut(M/D)$-invariant. Then $p = \tp(a/M)$ has
$\Aut(M/D)$-invariant \(\RV\)-germs. 
\end{lemma}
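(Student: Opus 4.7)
The plan is to verify the two constituents of "\(\Aut(M/D)\)-invariant \(\RV\)-germs": \(\Aut(M/D)\)-invariance of \(p\) itself, and of \(\germ{p}{f}\) for every \(\L(M)\)-definable \(f\colon p\to \eq{\RV}\). The first is immediate as \(p\) is the \(\L\)-reduct of \(p_1\), which is invariant by hypothesis. For the second, since each sort of \(\eq{\RV}\) is \(\emptyset\)-interpretable, one reduces to the case \(f\colon p\to \RV^m\): any such \(f\) factors through an \(\L(M)\)-definable \(g\colon p\to \RV^n\) via an \(\emptyset\)-definable canonical projection, and invariance of \(\germ{p}{g}\) propagates to \(\germ{p}{f}\).

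Fix then an \(\L(M)\)-definable \(f\colon p \to \RV^m\). By \cref{EQ RV}, one writes \(f(x) = \tilde f(\rv(P(x)))\) for some tuple \(P\in M[x]\) and some \(\rv(M)\)-definable function \(\tilde f\) in the short exact sequence structure. The key observation is that, by the hypothesis \(\rv(M(a))\subseteq \dcl_1(D\rho(a))\), both the parameters of \(\tilde f\) --- which lie in \(\rv(M) \subseteq \rv(M(a))\) --- and the argument \(\rv(P(a))\) lie in \(\dcl_1(D\rho(a))\). Transitivity of definable closure yields
\[
f(a) = \tilde f(\rv(P(a))) \in \dcl_1(D\rho(a)),
\]
so there is an \(\L_1(D)\)-definable (partial) function \(\hat f\) with \(\hat f(\rho(a)) = f(a)\). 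For \(\sigma\in\Aut(M/D)\), \(\sigma(\hat f) = \hat f\) and \(\sigma(\rho) \sim_{p_1} \rho\) by hypothesis, whence
\[
\sigma(\hat f \circ \rho) = \hat f \circ \sigma(\rho) \sim_{p_1} \hat f \circ \rho.
\]
Since \(\hat f \circ \rho\) agrees with \(f\) on realizations of \(p_1\), this gives \(p_1 \vdash f(x) = \sigma(f)(x)\); being an \(\L(M)\)-formula and \(p = \restr{p_1}{\L}\), it lies in \(p\), so \(\germ{p}{f}\) is \(\Aut(M/D)\)-invariant.

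The main technical point is the construction of \(\hat f\): one needs to convert the various \(\dcl_1\)-memberships --- those of the parameters of \(\tilde f\) and of \(\rv(P(a))\) --- into explicit \(\L_1(D)\)-definable functions of \(\rho(a)\), and compose them coherently with the SES-formula defining \(\tilde f\) to obtain a single \(\L_1(D)\)-definable map \(\hat f\) taking the correct value at \(\rho(a)\). Once \(\hat f\) is in hand, the germ-invariance transfer is a routine application of the invariance of \(\germ{p_1}{\rho}\) together with the elementary fact that an \(\L(M)\)-formula lies in \(p_1\) if and only if it lies in its \(\L\)-reduct \(p\).
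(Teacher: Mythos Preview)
There is a genuine gap stemming from the fact that \(\L\) is \emph{not} a sublanguage of \(\L_1\): the structure \(M_1 = \ur{M}\) carries only the algebraically closed field structure on \(\k\) together with the induced \(\Vg\)-structure, so whatever additional \(\RV\)-structure \(M\) has (it is only assumed to be an \(\RV\)-expansion of \(\Hen[0,0]\)) is invisible in \(\L_1\). Your assertion that \(p\) is ``the \(\L\)-reduct of \(p_1\)'' therefore has no content, and \(p_1\) alone does not determine \(p\). The same issue undermines the construction of \(\hat f\): the function \(\tilde f\) furnished by \cref{EQ RV} is definable in the \emph{enriched} \(\RV\)-structure of \(M\), not merely in the bare short exact sequence, so even once its parameters and its argument \(\rv(P(a))\) are known to lie in \(\dcl_1(D\rho(a))\), there is no reason for the value \(\tilde f(\rv(P(a)))\) to lie there. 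No \(\L_1(D)\)-definable \(\hat f\) with \(\hat f(\rho(a)) = f(a)\) need exist.

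The paper's proof avoids computing anything in \(\dcl_1\). It extends \(\sigma\) to an \(\L_1\)-elementary map \(\tau\) on \(M(a)\) (using invariance of \(p_1\)), observes that \(\tau\) must fix \(\rv(M(a))\) pointwise (since \(\rv(M(a)) \subseteq \dcl_1(D\rho(a))\) and \(\tau\) fixes both \(D\) and \(\rho(a) = \sigma(\rho)(a)\)), and then extends \(\tau\) to an automorphism of a large \(N_1 \succ M_1\) fixing all of \(\RV(N_1)\). The decisive observation is that \cref{EQ RV}, applied \emph{in the \(\L\)-structure \(N\)}, shows that \(\restr{\tau}{M(a)\cup\RV(N)}\) is \(\L\)-elementary: every \(\L\)-formula factors through \(\rv\) of polynomials followed by a formula in the \(\RV\)-structure of \(M\), and since \(\tau\) fixes \(\RV(N)\) pointwise the truth of such formulas in \(N\) is preserved, regardless of how rich that \(\RV\)-structure is. This yields both \(\sigma(p) = p\) and invariance of all \(\RV\)-germs at once.
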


This is essentially \cite[Lemma~4.2.5]{HilRid-EIAKE} in a slightly different
context and the proof is identical. The main ingredient is elimination of
quantifier down to \(\RV\) --- see \cref{EQ RV}.

\begin{proof}
Let \(N_1\) be a large saturated elementary extension of \(M_1\) containing
\(N\). Fix $\sigma \in \Aut(M/D)$. Since $p_1=\tp_{1}(a/M)$ is $\Aut(M/D)$
invariant, there is an $\L_{1}$-elementary embedding $\tau: M(a) \rightarrow
M(a)$ extending $\sigma$. Because $\germ{p_1}{\rho}$ is $\Aut(M/D)$-invariant,
we have \(\rho(a)=\sigma(\rho)(a)\). Consequently, since $\rv(M(a)) \subseteq
\dcl[1](D\rho(a))$, \(\restr{\tau}{\rv(M(a))}\) is the identity map. By \cref{EQ
RV} in \(N_1\), extending \(\tau\) by the identity on \(\RV(N_1)\) yields an
$\L_{1}$-elementary embedding. Since \(\RV\) is stably embedded, this embedding
further extends to an element \(\tau\) of $\Aut(N_{1}/D,\RV(N_{1}),a)$ ---
\emph{cf.} \cite[Lemma~10.1.5]{TenZie}.

By \cref{EQ RV} (in \(M\) now), \(\restr{\tau}{M(a)\cup\RV(N)}\) is
\(\L\)-elementary. Consequently, $\tp(a,M)=\tp(a,\sigma(M))$ and we conclude that
$\sigma(p)=p$, as required. Lastly, we argue that $\tp(a/M)$ has
$\Aut(M/C)$-invariant \(\RV\)-germs. Let $X \subseteq \RV^{n}$ be
$\L(Ma)$-definable. Then, by \cref{EQ RV}, it is $\L(\rv(M(a)))$-definable and
hence $X(N)=\tau(X(N))=\tau(X)(N)$. Equivalently, $\sigma$ fixes the $p$-germ of
any $\L(M)$-definable function $f: p \rightarrow \eq{\RV}$.
\end{proof}

Let us now describe how \(\RV\) grows when adding one field element:

\begin{lemma}
\label{higherarity}
Let \(A\subseteq \G(M)\cup \eq{\Vg}(M)\) contain \(\G(\acl(A))\) and let \(a\)
be a tuple of \(\K\)-points in \(N\succ M\) such that \(p = \tp_1(a/M_1)\) is
\(\L(A)\)-definable. Let \(b(a)\) be an \(\L_1(Aa)\)-definable generalized ball.
If \(\Vg(M)\) is discrete, we assume that the cut of \(b(a)\) is
\(\L_{1}'(\G(A)a)\)-definable. Let \(c\in N\) realize the generic
\(\restr{\eta_{b(a)}}{M_1a}\) --- that is, \(c\) is in \(b(a)\) but not in any
proper generalized \(\acl_1(M_1a)\)-definable sub-ball. Let \(q =
\tp_1(ac/M_1)\). Then there is a pro-\(\L_1(M)\)-definable map \(\rho\) into
some power of \(\RV\) such that \(\germ{q}{\rho} \in \dcl(A,\Lin_{A}(M))\) and
\(\rv(M(ac))\subseteq \dcl[1](\rv(M(a)),\rho(ac))\).
\end{lemma}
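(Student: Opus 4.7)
The plan is to build $\rho$ out of two ingredients: first, the $\rv$-data needed to reconstruct $\rv$ of polynomials in $ac$ via a Taylor expansion around a suitable centre in $b(a)$, and second, a ``leading coefficient residue'' which captures the finer $\k$-level information. The map $\rho$ will land in a product of copies of $\RV$, and its germ will be coded, for each polynomial, by the choice of which monomial dominates (a datum in $A$) together with an element of a space $\red(R)$ for an $\m$-avoiding $A$-definable module $R$ (hence in $\Lin_A(M)$).

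First, I would exploit the minimality hypothesis on $B(a)$: its elements are $\L_1(Aa)$-conjugate, so they all share the same cut $\Cc \in \aCut$ and the same geometric type (open ball, closed ball, or open generalized ball). Applying \cref{points balls} (or directly \cref{lift G} via \cref{dcl1 M}) over $M_1(a)$ gives, when $b(a)$ is not an open ball, an $\L_1(M_1(a))$-definable point $d(a)$ of $b(a)$ lying in $\K(M_1(a))$; if $b(a)$ is an open ball, I instead pick a single $\L_1(M_1(a))$-definable point of the smallest closed ball containing $b(a)$. For each $P \in M[x,y]$, write the Taylor expansion
\[
P(a,y) = \sum_{i=0}^{\deg_y P} P_i(a) (y-d(a))^i,
\]
so that $\rv(P(a,c))$ is determined by the valuations $v(P_i(a))$ (which live in $\rv(M(a))$) and by $\rv(c - d(a))$. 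Setting $\rho_0(ac) = \rv(c-d(a))$, genericity of $c$ in $b(a)$ forces a \emph{unique} index $i_P$ minimising $v(P_i(a)) + i\, v(c-d(a))$; this index is $\L_1(A)$-definable from $\rv(M(a))$ and $\rho_0(ac)$, and we have $\rv(P(a,c)) = \rv(P_{i_P}(a))\cdot \rho_0(ac)^{i_P}$. This establishes $\rv(M(ac)) \subseteq \dcl_1(\rv(M(a)),\rho_0(ac))$.

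Next I would analyse $\germ{q}{\rho_0}$. The object $\rv(c-d(a))$ is itself coded by the pair (cut of $b(a)$, class of $c - d(a)$ in $b(a)/b'(a)$) for the maximal proper $\L_1(M_1 a)$-subball $b'(a)$ of $b(a)$. The cut lies in $\eq{\Vg}(A)$ by orthogonality of $\k$ and $\Vg$ together with \cref{acl Vg}. For the residue part, the quotient $b(a)/b'(a)$ is, after translation, a torsor over some $\red(R(a))$ where $R(a)$ is an $A$-definable $\m$-avoiding module (this is exactly the situation handled in \cref{code germs dense} when $\Vg$ is dense, and in \cref{code germs discrete} when $\Vg$ is discrete, after passage to $M_1'$). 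Applying these germ results to the function $a \mapsto b(a)$, the germ of the ``residue'' component of $\rho_0$ on $p$ is coded in $\G(\acl(A)) \cup \Lin_A(M) \subseteq \dcl_1(A, \Lin_A(M))$, where the first containment uses the hypothesis $\G(\acl(A)) \subseteq A$.

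The main technical point I expect to wrestle with is the case where $b(a)$ is an open (generalized) ball with no canonical centre in $M_1(a)$: one must verify that replacing a genuine centre by an approximate centre in the smallest containing closed ball does not disturb either of the two key properties---namely that $\rv(c-d(a))$ still suffices to reconstruct $\rv(P(a,c))$ modulo $\rv(M(a))$, and that the germ of the resulting $\rho$ is still coded in $\Lin_A(M)$. Finally, one assembles $\rho$ as the (pro-definable) tuple of all $\rho_0$ obtained as $P$ ranges over $M[x,y]$ (via a family of Taylor expansions uniform in the degree) and concludes that $\germ{q}{\rho} \in \dcl_1(A, \Lin_A(M))$ and $\rv(M(ac)) \subseteq \dcl_1(\rv(M(a)), \rho(ac))$, which is what was to be shown.
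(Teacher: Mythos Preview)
Your overall strategy---use $\rho_0(ac)=\rv(c-d(a))$ for a suitable centre $d(a)$, reconstruct $\rv(P(a,c))$ via Taylor expansion, and code the germ of $\rho_0$ via \cref{code germs dense} or \cref{code germs discrete}---is exactly the shape of the paper's argument in the closed-ball case. However, several steps do not go through as written.

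First, the claim that genericity of $c$ forces a \emph{unique} index $i_P$ minimising $v(P_i(a))+i\,v(c-d(a))$ is false when $b(a)$ is a closed ball: then $v(c-d(a))$ equals the radius $\gamma\in\Vg(\acl_1(Aa))$ and ties occur. Your formula $\rv(P(a,c))=\rv(P_{i_P}(a))\cdot\rho_0(ac)^{i_P}$ is therefore wrong; one needs the finer statement (this is \cite[Lemma~4.3.13]{HilRid-EIAKE}) that the tied residues combine polynomially in the transcendental residue of $(c-d(a))$. Second, your centre $d(a)$ is taken in $\K(M_1(a))$, but \cref{points balls} only produces points in $\alg{M(a)}$, and more importantly you need $d(a)\in\dcl_0(Ma)$: otherwise $\rho_0$ is only $\L_1(M_1)$-definable rather than $\L_1(M)$-definable, and \cref{code germs dense}/\cref{code germs discrete}---which require the ball to be $\L_0(Ma)$-definable---do not apply to its germ. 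The paper takes $g$ to be the unique element of an $\L_0(Ma)$-definable finite set lying in $b$, and also needs an auxiliary tuple $\nu(ac)\in\RV(\dcl_1(Aac))$ (from \cite[Lemma~4.3.4]{HilRid-EIAKE}) to single out $b$ among the balls of $B(a)$ before this works. Third, your open-ball workaround (take $d(a)$ in the enclosing closed ball) genuinely fails: if $d(a)\notin b(a)$ then $\res((c-d(a))/t)$ is algebraic over $\k(M_1(a))$, cancellations in the Taylor sum are no longer excluded, and the reconstruction of $\rv(P(a,c))$ breaks down. The paper does not attempt to repair this; instead, for all non-closed balls it invokes \cite[Lemma~4.3.10]{HilRid-EIAKE} directly, obtaining a $\rho$ whose germ already lies in $\dcl(A)$ (no $\Lin_A$ needed), and only then appeals to \cref{code germs dense}/\cref{code germs discrete} to pass from $\dcl$ to $\dcl_1$.
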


\begin{proof}
We proceed by cases. 

\begin{claim}[{\cite[Lemma~4.3.10]{HilRid-EIAKE}}]
\label{4.3.10}
If \(b(a)\) is not closed, there is a pro-\(\L_0(M)\)-definable map \(\rho\)
into some power of \(\RV\) such that \(\germ{q}{\rho} \in \dcl(A)\) and
\(\rv(M(ac))\subseteq \dcl[0](\rv(M(a)),\rho(ac))\).
\end{claim}

Note that in equicharacteristic zero, condition (2) of \cite[Lemma
4.3.10]{HilRid-EIAKE} is verified as soon as \(b(a)\) is not closed. So we may
assume that \(b(a)\) is closed. By \cref{points balls}, there is an
\(\L_0(Ma)\)-definable finite set \(G(a) \subseteq \K\) such that \(G(a)\cap
b(a) = \{g\}\) is a singleton.

\begin{claim}[{\cite[Lemma~4.3.13]{HilRid-EIAKE}}]
\label{4.3.13}
If \(b(a)\) is closed and \(G(a) \subseteq \K\) is an \(\L_0(Ma)\)-definable
finite set such that \(G(a)\cap b(a) = \{g(a)\}\) is a singleton, then
\[\rv(M(ac)) \subseteq\dcl[1](\rv(M(a)),\rv(c- g(a))).\]
\end{claim}

Note that such a \(G(a)\) always exists by \cref{points balls}. Let \(\rho(ac) =
\rv(c- g)\in\dcl_0(Mac)\). We have \(\gamma = \val(c-g) \in \Vg(\dcl_1(Aa))\) as
it is the radius of \(b\). If \(\Vg(M)\) is discrete, it is in
\(\dcl[1]'(\G(A)a)\) by assumption. So, by \cref{code germs dense,code germs
discrete}, we have \(\germ{q}{h}\in \dcl[1](A\cup\Lin_A(M))\). Then, as
required, we have \(\germ{q}{\rho}\in\dcl[1](A,\Lin_{A}(M))\cap \G(M)\) and
\(\rv(M(ac)) \subseteq \dcl[1](\rv(M(a)),\rho(ac))\).
\end{proof}

We can now prove the existence of sufficiently invariant resolutions of
geometric points:

\begin{proposition}
\label{inv res}
Let \(A  = \acl(A)\subseteq \eq{M}\). There exists \(C\subseteq \K(N)\), for
some \(N\succ M\), with:
\begin{enumerate}
\item $\G(A) \subseteq \dcl[1](C,\Vg(M))$;
\item \(\tp_1(C/M)\) is \(\L(\G(A)\cup\eq{\Vg}(A))\)-definable;
\item \(\tp(C/M)\) has $\Aut(M/\G(A),\RV(M),\Lin_{A}(M))$-invariant $\RV$-germs.
\end{enumerate}
\end{proposition}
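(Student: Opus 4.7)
The plan is a transfinite recursion enumerating $\G(A) = (e_i)_{i<\kappa}$ and building, at each stage $i$, a tuple $C_i \subseteq \K(N_i)$ for some $N_i \succ M$, maintaining three invariants: (i) $\{e_j : j < i\} \subseteq \dcl_1(C_i, \Vg(M))$, (ii) $\tp_1(C_i/M)$ is $\L_1(\G(A) \cup \eq{\Vg}(A))$-definable, and (iii) $\tp(C_i/M)$ has $\Aut(M/\G(A), \RV(M), \Lin_A(M))$-invariant $\RV$-germs. If $e_i \in \K$, I set $C_{i+1} = C_i e_i$; if $e_i \in \eq{\Vg}(A)$, nothing is added. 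Otherwise $e_i \in \Lat(A) \cup \Tor(A)$, and I apply \cref{decomp solv} to reduce the lifting of $e_i$ to a finite sequence of unary pieces $b_\ell$: those with $\ell < n$ live in $\Vg/\Delta_\ell \subseteq \eq{\Vg}(A)$ and need no field lift, while those with $\ell \geq n$ --- after fixing the previous lifts $a_k \in C_i$ supplied by \cref{decomp solv points} --- are identified, via the isomorphisms of \cref{decomp solv}, with $\L_1(A C_i)$-definable generalized balls in $\K$.

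For each such generalized ball $b$, I apply \cref{density} to obtain an $\L_1(\G(A) \cup \eq{\Vg}(A))$-definable $p \in \Tp^1(M_1)$ consistent with $b$; after enlarging the ambient model if necessary, pick $c \models p$ generic in $b$ over $M_1 C_i$. The tensor $\tp_1(C_i/M) \tensor \restr{p}{M}$ is $\L_1(\G(A) \cup \eq{\Vg}(A))$-definable, so (ii) persists for $C_{i+1} = C_i c$. To maintain (iii), I invoke \cref{higherarity} with $B(C_i) = \{b\}$: it supplies a pro-$\L_1(M)$-definable map $\rho$ whose $\tp_1(C_i c / M_1)$-germ lies in $\dcl_1(A, \Lin_A(M))$ and satisfies $\rv(M(C_i c)) \subseteq \dcl_1(\rv(M(C_i)), \rho(C_i c))$. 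Since $\G(A) \cup \Lin_A(M)$ is fixed pointwise by $\Aut(M/\G(A), \RV(M), \Lin_A(M))$, the germ of $\rho$ is invariant under this group, and \cref{key} combined with the inductive hypothesis yields (iii) at stage $i+1$. Condition (1) at the end of the recursion follows from \cref{decomp solv points}: every piece $b_\ell$ with $\ell \geq n$ has an explicit lift in $C$ by construction, and the $\Vg/\Delta_\ell$-pieces for $\ell < n$ are recovered from $v_{\Delta_\ell}(C) \cup \Vg(M)$.

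The main obstacle I foresee is chaining the invariance statement (iii) across many (possibly transfinite) inductive steps: \cref{key} is stated in terms of a single field tuple and its image under one $\rho$, whereas here each step produces only an incremental $\rho_i$ controlling the growth $\rv(M(C_{i+1}))/\rv(M(C_i))$. The resolution is the "moreover" clause of \cref{transitive}, applied uniformly with the stably embedded set $\RV \cup \Lin_A$ (\cref{Linproperties}) and fixed base $D = \G(A) \cup \RV(M) \cup \Lin_A(M)$: at each successor stage, the germ of $\rho_i$ lies in $\dcl_1(D)$ and $\rv(M(C_{i+1}))$ is contained in $\dcl_1(D, \rho_i(C_{i+1}))$, so transitivity of invariant $\RV$-germs propagates (iii). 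Limit stages go through by compactness, again using stable embeddedness of $\RV \cup \Lin_A$. A secondary concern is the discrete-value-group case, but the uniformizer constant $\pi$ together with \cref{code germs discrete} ensures that \cref{higherarity} applies verbatim, including for closed balls $b$ treated via \cref{points balls}.
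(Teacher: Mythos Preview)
Your overall plan --- a transfinite induction that lifts geometric points to $\K$ one generalized ball at a time, using \cref{higherarity} to control $\rv$ and \cref{key} for the invariance --- matches the paper's. But the chaining of (iii) has a genuine gap. \cref{higherarity} only yields $\rv(M(C_{i+1})) \subseteq \dcl[1](\rv(M(C_i)),\rho_i(C_{i+1}))$, and since $C_i$ sits in a proper extension of $M$ the set $\rv(M(C_i))$ is not contained in your fixed $D$; so your inclusion $\rv(M(C_{i+1})) \subseteq \dcl[1](D,\rho_i(C_{i+1}))$ does not follow. Your appeal to \cref{transitive} does not repair this as written: that lemma outputs invariance of $\restr{q}{M}$, not of the joint type $\tp(C_i c/M)$, and to feed it one would still have to establish that $\tp(c/N_i)$ has the right invariant $\RV$-germs over $N_i$ --- which is essentially the same problem one level up. The paper's fix is not to use \cref{transitive} here at all but to carry a single \emph{accumulated} pro-$\L_1(M)$-definable $\rho$ through the induction, maintaining at every stage both $\rv(M(c)) \subseteq \dcl[1](\RV(M),\rho(c))$ and $\germ{p_1}{\rho} \in \dcl[1](\G(A),\RV(M),\Lin_A(M))$, and then to apply \cref{key} \emph{once} at the end. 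That accumulated $\rho$ is the missing ingredient.

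Two smaller gaps. First, the ball $b$ is $\L_1(AC_i)$-definable over the extension $N_i$, not $\L(A)$-definable in $M$, so \cref{density} does not apply and your ``tensor'' $\tp_1(C_i/M)\tensor\restr{p}{M}$ is not well-formed (the type you want is an integral). The paper instead uses property \D\ directly (via \cref{compl L1}) over $\acl[1](Ac)$ in the extension, and then observes via \cref{code type} that the resulting $\L_1$-type over $M_1$ is $\L(A)$-definable. Second, the pieces $b_\ell$ with $\ell<n$ \emph{do} need field lifts: the isomorphisms $f_\ell$ of \cref{decomp solv} are $c\,a_{<\ell}$-definable, so without lifts $a_k\in C_i$ for $k<n$ the subsequent balls $f_\ell(b_\ell)$ are not $\L_1(AC_i)$-definable and the recursion cannot proceed; \cref{decomp solv points} does not supply these lifts since at that stage $e_i$ is not yet of the form $\mu_c(\B_n(C_i))$. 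The paper sidesteps both the enumeration of $\G(A)$ and this issue by instead adding generics of \emph{all} $\L_1(\G(A)c)$-definable generalized balls --- which includes the $v_\Delta^{-1}$-preimages of the $\Vg/\Delta$-pieces --- and then applying \cref{lift G} once at the end to obtain condition~(1).
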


\begin{proof}
By transfinite induction, we construct a tuple \(c\) in \(N\succ M\) and a
(pro-)\(\L_1(M)\)-definable function $\rho$ such that:
\begin{itemize}
\item \(p_1=\tp_1(c/M)\) is finitely satisfiable in \(M\) and $\L(A)$-definable;
\item \(\rv(M(c))\subseteq \dcl[1](\RV(M),\rho(c))\);
\item \(\germ{p_1}{\rho} \in \dcl[1](\G(A),\RV(M),\Lin_A(M))\);
\item any $\L_{1}(\G(A)c)$-definable generalized ball \(b\) has a point in \(C\);
\item for all $\L_{1}(\G(A)c)$-definable convex subgroup $\Delta \leq \Vg$,
$\Vg/\Delta(\dcl[1](Ac))\subseteq \val_{\Delta}(\K(C))$.
\end{itemize}
Note that \(\tp_1(\acl_1(Ac)\cap N/M)\) is definable over
\(\G(\acl(A))\cup\eq{\Vg(\acl(A))}\subseteq A\) (\emph{cf.} \cref{code type}).
Let \(b(c)\) be an \(\L_1(\G(A)c)\)-definable generalized ball whose cut is a
\(\v(c)\)-translate of an \(\L(A)\)-definable cut. By property \D, the generic
\(\eta_{b(c)}\) can be extended to a complete \(\L(\acl_1(Ac)\cap N)\)-definable
\(\L_1\)-type --- and this type is finitely satisfiable in \(N\). Using
\cref{higherarity}, we can thus add a generic of \(b(c)\) to \(c\). We then
iterate this construction.

Given such a tuple \(c\), by \cref{lift G} applied in \(M_1\), we have
\(\G(A)\subseteq \dcl_1(c,\eq{\Vg(A)}) \subseteq \dcl_1(c,\Vg(M))\). Moreover,
the type \(\tp(c/M)\) has \(\aut(M/A,\RV(M),\Lin_A(M))\)-invariant \(\RV\)-germs
by \cref{key}.
\end{proof}

We deduce \cref{invariantcompletions} from \cref{inv res} and the machinery of
\cite[Section~4]{HilRid-EIAKE}.

\begin{proposition}[{\cite[Corollary~4.4.1]{HilRid-EIAKE}} and \cref{key}]
\label{4.4.1}
Let \(A\subseteq \K(M)\) and \(R\subseteq\RV(M)\). There exists \(C\subseteq
\K(N)\), for some \(N\succ M\), such that and \(\tp(C/M)\) is
$\Aut(M/A,\RV)$-invariant and \(R\subseteq \dcl_0(AC)\).
\end{proposition}

\begin{proposition}[{\cite[Corollary~4.4.3]{HilRid-EIAKE}}]
\label{4.4.3}
Let \(A\subseteq \K(M)\). There exists \(C\prec N\), for some \(N\succ M\), such
that and \(\tp(C/M)\) is $\Aut(M/A,\RV)$-invariant and \(C\) contains a
realization of every \(\L(A)\)-type.
\end{proposition}

\begin{proposition}[{\cite[Corollary~4.3.17]{HilRid-EIAKE}}]
\label{4.3.17}
Let \(A\subseteq\eq{M}\) be small, let \(C\prec M\) contain a realization of
every \(\L(A)\)-type and let \(a\in \K(N)\), for some \(N\succ M\) be such that
\(\tp_0(a/M)\) is \(\Aut(M/A)\)-invariant. Then \(\tp(a/M)\) is
\(\Aut(M/C,\RV)\)-invariant.
\end{proposition}

\begin{proof}[Proof of \cref{invariantcompletions}]
Fix \(A = \acl(A)\subseteq \eq{M}\) and \(a\) in some elementary extension of
\(M\) such that \(p_0 = \tp_0(a/M)\) is \(\aut(M/\G(A))\)-invariant.
\begin{itemize}
\item By \cref{inv res}, we find \(C\subseteq \K(N)\), for some \(N \succ M\)
(sufficiently saturated and homogeneous), such that \(\tp(C/M)\) has
$\Aut(M/\G(A),\RV(M),\Lin_{A}(M))$-invariant $\RV$-germs and $\G(A) \subseteq
\dcl[1](C\gamma)$ for some (infinite) tuple \(\gamma\) in \(\Vg(M)\).

\item By \cref{4.4.1} and transitivity (\cref{transitive}), growing \(C\), we
may assume that \(\gamma \in\val(C)\). By \cref{4.4.3} and transitivity, we can
further assume that \(C\prec N\) contains a realization of every type over
\(\G(A)\).

\item We may assume that \(a\models \restr{p_0}{N}\) --- \emph{cf.}
\cite[Claim~4.4.7]{HilRid-EIAKE}. Then \(\tp_0(a/N)\) is
\(\aut(N/C)\)-invariant and \(\tp(a/N)\) is
\(\aut(N/C,\RV)\)-invariant by \cref{4.3.17}.
\end{itemize}

By transitivity \(\tp(a/M)\) is \(\aut(M/\G(A),\RV(M),\Lin_A(M))\)-invariant.
\end{proof}

\section{Eliminating imaginaries}\label{conclusion}

Following the general strategy of \cite[Theorem~6.1.1]{HilRid-EIAKE}, we can now
deduce elimination of imaginaries. Let \(M\) be a sufficiently saturated and
homogeneous, and as in \cref{invariantcompletions}. Let \(M_0 = \alg{M}\) and
\(M_1 = \ur{M}\).

\begin{proposition}
\label{weakcode}
Let $e \in \eq{M}$ and $A=\acleq(e)$. Then 
\begin{equation*}
e \in \dcleq(\G(A), \eq{(\RV \cup \Lin_{A})}(A)). 
\end{equation*}
\end{proposition}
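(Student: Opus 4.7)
The plan is to combine the density theorem for definable types (\cref{density}) with the invariance theorem (\cref{invariantcompletions}), and then to extract a code in the linear/residual imaginaries using stable embeddedness of $\RV \cup \Lin_A$ (\cref{Linproperties}).

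First I would reduce to the home sort: write $e = f(c_0)$ for some $\emptyset$-definable function $f$ and a tuple $c_0 \in \K^n$, so that the fiber $X := f^{-1}(e)$ is $\L(A)$-definable. Applying \cref{density}, there exists an $\L_1(\G(A) \cup \eq{\Vg}(A))$-definable type $p \in \Tp^1(M_1)$ consistent with $X$; realize it by some $c \in \K(N)$ in a sufficiently saturated elementary extension $N \succ M$ with $f(c) = e$. Note that since $A = \acleq(A)$, orthogonality (\cref{acl Vg}) yields $\eq{\Vg}(A) \subseteq \acleq(\Vg(A))$, so the type $\tp_0(c/M)$ is at worst $\aut(M/\acleq(\G(A)))$-invariant.

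Next, I would apply \cref{invariantcompletions} to conclude that $\tp(c/M)$ is $\aut(M/\G(A), \RV(M), \Lin_A(M))$-invariant. The hypothesis of \cref{invariantcompletions} requires $\tp_0(c/M)$ to be $\aut(M/\G(A))$-invariant, which may need a small adjustment --- replacing $c$ by an appropriate conjugate among the finitely many $\aut(M/\G(A))$-translates of $p$, or absorbing the extra $\eq{\Vg}(A)$ parameters using that $\eq{\Vg} \subseteq \eq{\RV}$. Once invariance of $\tp(c/M)$ is established, since $e \in \dcleq(c)$, the element $e$ is fixed by $\aut(M/\G(A), \RV(M), \Lin_A(M))$, and by saturation and homogeneity of $M$ this gives $e \in \dcleq(\G(A), \RV(M), \Lin_A(M))$.

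To obtain the stronger conclusion $e \in \dcleq(\G(A), \eq{(\RV \cup \Lin_A)}(A))$, I would use stable embeddedness of $\RV \cup \Lin_A$ (\cref{Linproperties}). The set of tuples $d$ in $(\RV \cup \Lin_A)(M)$ that, together with $\G(A)$, define $e$ is an $A$-definable subset of a power of $\RV \cup \Lin_A$; by stable embeddedness it admits a code $\bar d \in \eq{(\RV \cup \Lin_A)}$, which is fixed by $\aut(M/A)$ and hence lies in $\eq{(\RV \cup \Lin_A)}(A)$. This code, together with $\G(A)$, defines $e$.

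The main obstacle will be the careful bookkeeping in the second paragraph: the density theorem naturally produces types definable over $\G(A) \cup \eq{\Vg}(A)$, while the invariance theorem is stated with invariance over $\G(A)$ alone. The resolution rests on the fact that $\eq{\Vg}(A) \subseteq \eq{\RV}(A) \subseteq \eq{(\RV \cup \Lin_A)}(A)$, so any extra $\Vg$-imaginary parameters appearing in the construction are ultimately harmless and get absorbed into the imaginary sorts allowed in the conclusion.
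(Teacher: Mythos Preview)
Your overall strategy matches the paper's proof exactly: reduce to a fiber $X=f^{-1}(e)\subseteq\K^n$, apply \cref{density} to find an $\L_1(\G(A)\cup\eq{\Vg}(A))$-definable $p\in\Tp^1(M_1)$ consistent with $X$, apply \cref{invariantcompletions} to get invariance of $\tp(a/M)$, and then pass to a code in $\eq{(\RV\cup\Lin_A)}(A)$ via stable embeddedness. The last two paragraphs are fine.

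The one place where your argument is not yet complete is exactly the obstacle you flag. You need $\tp_0(a/M)$ to be $\aut(M/\G(A))$-invariant in order to invoke \cref{invariantcompletions}, and neither of your proposed workarounds quite does this: conjugating among finitely many $\aut(M/\G(A))$-translates of $p$ does not produce a $\G(A)$-invariant type, and absorbing $\eq{\Vg}(A)$ into the conclusion does not help with the \emph{hypothesis} of \cref{invariantcompletions}. The clean fix --- which the paper uses without further comment --- is \cref{code type}: since $\restr{p}{M}\in\Tp^1(M)$ is $\L(A)$-definable and finitely satisfiable in $M$, \cref{code type} gives that the associated $\L_0$-type $\restr{q_0}{M_1}$ is $\L_1(\G(A))$-definable, so $\tp_0(a/M)$ is $\aut(M/\G(A))$-invariant outright. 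The point is that the extra $\eq{\Vg}(A)$ parameters from \cref{density} are only needed to pin down the full $\L_1$-type; the coarser $\L_0$-type is already determined by the valuation data, and that is coded in $\G(A)$ alone. Once you insert this one-line appeal to \cref{code type}, your argument is complete and identical to the paper's.
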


\begin{proof}
We may assume \(M\) is sufficiently saturated and homogeneous. There is an
$\L$-definable map $f$ and a tuple $m$ in $K(M)$ such that $f(m)=e$. Let
$X=f^{-1}(e)$. By \cref{density} we can find a type $p \in \TP^{1}(M)$ such
that:
\begin{itemize}
\item $p \cup X$ is consistent;
\item $p$ is $\L_{1}(\G(A) \cup \eq{\Vg}(A))$-definable.
\end{itemize}
Take $a\in X$ satisfying \(p\). Then $\tp_{0}(a/M)$ is \(\L(\G(A))\)-definable.
By \cref{invariantcompletions}, the type \(q = \tp(a/M)\) is
$\Aut(M/\G(A),\RV(M),\Lin_{A}(M))$-invariant. So, for every automorphism $\sigma
\in \Aut(M/\G(A),\RV(M),\Lin_{A}(M))$, we have \(e = \sigma(e)\) since \(q =
\sigma(q) \vdash \sigma(e) = f(x) = e\).

As \(\RV\cup\Lin_{A}\) is stably embedded (\emph{cf.} \cref{Linproperties}), it
follows (\emph{e.g.} \cite[Lemma~4.2.3]{HilRid-EIAKE}) that
\[e\in\dcl(\G(A),\RV(M),\Lin_{A}(M)).\]

So there is a $\L(\G(A))$-definable function $g$ and a tuple $c$ in $\RV^{m}(M)
\times \Lin_{A}^{n}(M)$ such that $g(c)=e$. Let $Y=g^{-1}(e)$. This is an
$A$-definable subset of $\RV^{m} \times \Lin_{A}^{n}$. Consequently,
$\code{Y} \in \eq{(\RV \cup \Lin_{A})}(A)$ and
\[e \in \dcleq(\G(A), \code{Y}) \subseteq \dcleq(\G(A) \cup \eq{(RV \cup
\Lin_{A})}(A)),\] as required.
\end{proof}

We now want to describe the imaginaries in \(\RV\cup\Lin_A\).

\begin{proposition}[{\cite[Proposition 5.3.1]{HilRid-EIAKE}}]
\label{imaRV}
Further assume that \(M\) is a \(\k\)-\(\Vg\)-expansion of \(\Hen[0,0]\) and
that for every $n \in \Zz_{\geq 2}$, one has $[\Vg: n\Vg]< \infty$ and the
pre-image in $\RV$ of any coset of $n\Vg$ contains a point which is algebraic
over $\emptyset$. Then, for $A=\acleq(A) \subseteq \eq{M}$, we have
\[\eq{(\RV \cup \Lin_{A})}(A) \subseteq \dcleq(\eq{\Vg}(A) \cup
\eq{\Lin_{A}}(A)).\]
\end{proposition}

Finally, let us relate \(\eq{\Lin_A}\) to the linear imaginaries \(\lineq{\k}\):

\begin{lemma}
\label{factLin}
Let $A=\dcleq(A) \subseteq M$. Then $\eq{\Lin_{A}}(A) \subseteq
\dcleq(\lineq{\k}(A))$.
\end{lemma}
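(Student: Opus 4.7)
The plan is to reduce this to elimination of imaginaries in a multi-sorted \(\k\)-linear structure and then repackage the result in terms of \(\lineq{\k}\). By the first item of \cref{Linproperties}, the sort \(\Lin_{A}\) is stably embedded with induced structure definable in the \(\L_\vect\)-structure on \(\k\) together with the vector spaces \(\red(R_s)\) for \(s \in \Lat(\acleq(A))\). Hence every \(e \in \eq{\Lin_{A}}(A)\) is coded, via an \(A\)-definable quotient map, by the equivalence class of some tuple \(x = (x_1,\dots,x_n)\) with \(x_i \in \red(R_{s_i})\).

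First, I would combine all the sorts into a single module: set \(R = \bigoplus_i R_{s_i}\), an \(\m\)-avoiding module of some type \(c \in \aCut\), and identify \(\prod_i \red(R_{s_i})\) with \(\red(R)\). Absorbing any further \(\Lin_A\)-parameters used to define the equivalence relation into \(R\) by enlarging \(c\), the problem becomes coding an \(A\)-definable equivalence class on \(\red(R)\) for some \(R \in \Mod_{c}(\acleq(A))\).

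Next, I would apply elimination of imaginaries in the \(\L_\vect\)-theory of finite-dimensional \(\k\)-vector spaces --- namely \cref{code subspace} together with \cite[Section~5]{Hru-Groupoid} and the closure properties of \cref{Lin closed} --- to code this equivalence class by an element \(y \in X^{(\k,\red(R))}\) for some \(\emptyset\)-interpretable quotient \(X\) of \(\Vv\) in that theory. This yields a pair \((R,y) \in \Tor_{c,X}(\acleq(A))\) coding \(e\) over \(A\).

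The main obstacle is descending from \(\acleq(A)\) to \(A\): a priori we only have \(R \in \Mod_{c}(\acleq(A))\), whereas membership of \((R,y)\) in \(\lineq{\k}(A)\) forces \(R \in \Mod_c(A)\). I would handle this by folding the Galois orbit \(\{(R^{(j)}, y^{(j)})\}_{j \leq k}\) of \((R,y)\) over \(A\) into a single \(A\)-definable element of a larger \(\Tor\)-sort, using a direct sum \(\tilde{R}\) of the \(R^{(j)}\) of type \(c^k\) combined with a symmetric \(\emptyset\)-interpretable quotient of \(\Vv\) that records the orbit structure under the action of the symmetric group on the blocks. The technical heart of the argument is arranging this direct-sum module to be \(A\)-definable as a submodule of the appropriate ambient \(\K^N\) despite the permutation action, presumably via a canonical Galois-invariant construction absorbing the orbit into the interpretable quotient \(X\) rather than into the module itself.
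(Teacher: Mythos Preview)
Your proposal is essentially correct but takes a longer path than the paper.

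For step~3, you do not need to invoke elimination of imaginaries in the \(\L_\vect\)-theory via \cref{code subspace} or \cite{Hru-Groupoid}. The sorts \(\lineq{\k}\) already include, by definition, \(\Tor_{c,X}\) for \emph{every} \(\emptyset\)-interpretable quotient \(X\) of \(\Vv\). So once the data are packaged into a single \(A\)-definable \(\m\)-avoiding module \(R\) of type \(c\) with both the set \(X_a\) and its parameter \(a\) sitting inside \(\red(R)\), the paper simply lets \(E\) be the \(\emptyset\)-\(\L_\vect\)-definable relation \(a\,E\,a' \Longleftrightarrow X_a = X_{a'}\) and observes that \(e\) is interdefinable over \(\code{R}\) with the point \((R,[a]_E)\in\Tor_{c,\Vv/E}\subseteq\lineq{\k}\). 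No further reduction is carried out.

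For step~4, the paper performs no Galois descent at all: it asserts outright that the modules \(R_i,R'_j\) can be taken \(A\)-definable. This is immediate once one notes that in every application of the lemma (see the proofs of \cref{AKE,ac}) one has \(A=\acleq(e)=\acleq(A)\), so that \(\Lat(\acleq(A))=\Lat(A)\) and every sort appearing in \(\Lin_A\) is already \(A\)-definable. Your concern about descending from \(\acleq(A)\) to \(\dcleq(A)\) is reasonable given the hypothesis as literally stated, but it is not an issue the paper addresses or needs, and the orbit-symmetrisation you sketch is considerably more work than the intended one-line argument.
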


\begin{proof}
Recall that \(\Lin_A\) is a stably embedded collection of \(\k\)-vector spaces
--- see \cref{Linproperties}. Take $e \in \eq{\Lin_{A}}(A)$. Then $e$ is the
code of a definable set \(X \subseteq \prod_i\red(R_i)\) where \(a\in
\prod_j\red(R'_j)\), the family \((X_a)_a\) is \(\L(A)\)-definable and \(R_i\)
and \(R'_j\) are $A$-definable $\m$-avoiding module. Then \(R = \prod
R_i\times\prod_j R'_j\) is an $A$-definable $\m$-avoiding module. Adding zero
coordinates, we may assume that we have \(X_a\subseteq \red(R)\) and \(a\in
\red(R)\).

For every basis \(b\) of \(\red(R)\), the set \(X_a\) is
\(\L_{\vect}(\eq{\k}(A)ab)\)-definable in \((\k,\red(R))\). Replacing \(a\) with
\(ab \in \red(R)^{d+1}\), we may assume that \(X_a\) is
\(\L_{\vect}(a)\)-definable. Let \(a E a'\) be the equivalence relation defined
by \(X_a = X_a'\) and let \(c\) be the type of \(R\). Then
\(e\in\dcl(\Tor_{c,V^{d+1}}(A))\).
\end{proof}

We can now prove our main results. Let \(M\) be as in
\cref{invariantcompletions}.

\begin{theorem}
\label{AKE}
Assume that \(M\) is a \(\k\)-\(\Vg\)-expansion of \(\Hen[0,0]\) and that, for
every $n \in \Zz_{\geq 2}$, one has $[\Vg: n\Vg]< \infty$ and the pre-image in
$\RV$ of any coset of $n\Vg$ contains a point which is algebraic over
$\emptyset$. Then $M$ weakly eliminates imaginaries down to $K \cup
\lineq{\k}\cup \eq{\Vg}$. 
\end{theorem}

\begin{proof}
Let $e \in \eq{M}$ and $A=\acleq(A)$. By \cref{weakcode}, we have \[e \in
\dcleq(\G(A) \cup \eq{(\RV \cup \Lin_{A})}(A)).\] By \cref{imaRV}, we have
\[\eq{(\RV \cup \Lin_{A})}(A) \subseteq \dcleq(\eq{\Vg}(A),
\eq{\Lin_{A}}(A))\subseteq \dcleq(\eq{\Vg}(A), \lineq{\k}(A)),\]
where the last inclusion follows from \cref{factLin}.
\end{proof}

\begin{remark}
\label{general RV im}
In general, imaginaries in the short exact sequence \(\k^\times\to
\RV^\times\to\Vg\) might be more complicated. We can embed the short exact
sequence \(\k^\times(M)\to \RV^\times(M)\to\Vg(M)\) into a sequence
\(\k^\times(M)\to H \to\Qq\tensor \Vg(M)\) --- this is clear after adding a
section in some \(\aleph_1\)-saturated elementary extension. A refinement of
\cite[Theorem~5.1.4]{HilRid-EIAKE} shows that any set \(X\) definable in
\(\RV(M)\) is weakly coded in \(\eq{\Vg}(M)\cup (\eq{H_{\Delta}} \cap
\dcl(M))\), where \(\Delta\) is the divisible hull of \(\Vg(\code{X})\).

The elements of \(\eq{H_{\Delta}} \cap \dcl(\RV)\) correspond to \(M\)-definable
subsets of \(H_\delta\) for some tuple \(\delta\) in \(\Delta\subseteq
\Qq\tensor\Vg(M)\). These are the imaginaries that we need to add, or classify,
to obtain a version of \cref{AKE} without any finiteness assumption on
\(\Vg/n\Vg\).

If $[\Vg: n\Vg]<\infty$ for every \(n \geq 2\) and we add constants as in
\cref{AKE}, then any \(H_\delta\), for \(\delta\in\Delta\), is canonically
isomorphic to some \(H_\gamma\) with \(\gamma\in\Vg(\code{X})\); and we recover
\cref{AKE}. In contrast, if \(\k\) is a pure algebraically closed field, the
\(\k\)-linear structure \(H_\Delta\) eliminates imaginaries and
\(H_\Delta\cap\dcl(M) = H_{\Vg(\code{X})}\), yielding back \cref{wei res ACF}.
\end{remark}

\begin{theorem}
\label{ac}
Assume that \(M\) admits \(\L\)-definable angular components. Then $M$ weakly
eliminates imaginaries down to $K \cup \lineq{\k} \cup \eq{\Vg}$.
\end{theorem}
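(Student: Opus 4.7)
The plan is to adapt the proof of \cref{AKE} by replacing the appeal to \cref{imaRV} with a direct argument exploiting the splitting of \(\RV\) provided by the angular component. Fix \(e \in \eq{M}\) and \(A = \acleq(e)\). The first step, \cref{weakcode}, depends only on \cref{invariantcompletions} and no finiteness hypothesis on \(\Vg\) or \(\k^\times\); it still yields
\[e \in \dcleq(\G(A) \cup \eq{(\RV \cup \Lin_A)}(A)).\]

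The heart of the argument is to establish, in lieu of \cref{imaRV}, the inclusion \(\eq{(\RV \cup \Lin_A)}(A) \subseteq \dcleq(\eq{\Vg}(A) \cup \eq{\Lin_A}(A))\). The angular component \(\ac\) is a multiplicative map extending the residue map on \(\O^\times\); it descends to a definable group-theoretic section of \(\RV^\star \to \Vg\). Thus the short exact sequence \(1 \to \k^\star \to \RV^\star \to \Vg \to 0\) splits definably, and \(x \mapsto (\ac(x), \v(x))\) identifies \(\RV^\star\) with \(\k^\star \times \Vg\) as \(\emptyset\)-definable sets. Combined with \cref{Linproperties}, this presents \(\RV \cup \Lin_A\) as the union of a \(\k \cup \Lin_A\)-definable part and a \(\Vg\)-definable part. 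By \cref{acl Vg}, \(\Vg\) and \(\k \cup \Lin_A\) are stably embedded and orthogonal (once a basis is named, each sort of \(\Lin_A\) becomes \(\k\)-internal). A standard orthogonality argument then shows that any \(A\)-definable subset of \((\RV \cup \Lin_A)^n\) decomposes as a finite disjoint union of rectangles \(X_i \times Y_i\) with \(X_i \subseteq (\k \cup \Lin_A)^{n_1}\) and \(Y_i \subseteq \Vg^{n_2}\), each piece definable over \(A\); coding the pieces separately, any \(e \in \eq{(\RV \cup \Lin_A)}(A)\) is interdefinable over \(A\) with a pair of imaginaries, one in \(\eq{\Lin_A}(A)\) and one in \(\eq{\Vg}(A)\).

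To conclude, \cref{factLin} gives \(\eq{\Lin_A}(A) \subseteq \dcleq(\lineq{\k}(A))\); since both \(\Lat\) and \(\Tor\) are (up to \(\emptyset\)-definable identification) subcollections of \(\lineq{\k}\), we also have \(\G(A) \subseteq \dcleq(\K(A) \cup \lineq{\k}(A))\). Combining these inclusions yields
\[e \in \dcleq(\K(A) \cup \lineq{\k}(A) \cup \eq{\Vg}(A)),\]
which is the required weak elimination down to \(\K \cup \lineq{\k} \cup \eq{\Vg}\).

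The only real obstacle is the orthogonal decomposition of imaginaries carried out in the middle paragraph. While conceptually clear, and a standard technique whenever two sorts are stably embedded and orthogonal, it demands some bookkeeping in the present multi-sorted setting where \(\Lin_A\) involves parameters from \(A\) and its sorts depend on \(A\)-definable modules. Everything else is a direct combination of results already proved in the paper, so once the orthogonality-based splitting is in place, the theorem follows formally.
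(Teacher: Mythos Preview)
Your proposal is correct and follows essentially the same route as the paper's proof: apply \cref{weakcode}, use the angular component to split \(\RV\) as \(\k^\star\times\Vg\), absorb the \(\k\)-part into \(\Lin_A\) (since \(\k=\red(\O)\) is one of the sorts of \(\Lin_A\)), then invoke orthogonality of \(\Vg\) and \(\Lin_A\) together with \cref{factLin}. The paper compresses your middle paragraph into a single line --- ``\(\eq{(\RV\cup\Lin_A)}\subseteq\eq{(\Vg\cup\Lin_A)}\) and the statement follows from orthogonality and \cref{factLin}'' --- but the content is the same.
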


\begin{proof}
Let $e \in \eq{M}$ and $A=\acleq(A)$. By \cref{weakcode}, $e \in
\dcleq(\G(A), (\RV \cup \eq{\Lin_{A}})(A))$. Since $\RV$ is
\(\L\)-definably isomorphic to $\k^{\times} \times \Vg$, then $\eq{(\RV
\cup \Lin_{A})} \subseteq \eq{(\Vg \cup \Lin_{A})}$. The statement now follows
from orthogonality of \(\Vg\) and \(\Lin_A\) and \cref{factLin}.
\end{proof}

As an illustration, we conclude this
paper with the complete classification of (almost) \(\k\)-internal sets, when the value group is dense.

\begin{corollary}
\label{kint}
Let $M$ be as in \cref{AKE} or \cref{ac} and assume that \(\Vg(M)\) is dense.
Let \(A\subseteq\eq{M}\) and \(X\) be \(A\)-definable. The following statements
are equivalent:
\begin{enumerate}
\item \(X\) is \(\k\)-internal;
\item \(X\) is almost \(\k\)-internal;
\item \(X\) is orthogonal to \(\Vg\);
\item \(X \subseteq \dcl(\acl(A),\Lin_A)\).
\end{enumerate}
\end{corollary}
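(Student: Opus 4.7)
The plan is to prove the cycle $(1) \Rightarrow (2) \Rightarrow (3) \Rightarrow (4) \Rightarrow (1)$, with only $(3) \Rightarrow (4)$ requiring substantial work. The implication $(1) \Rightarrow (2)$ is immediate. For $(2) \Rightarrow (3)$, any almost \(\k\)-internal set is orthogonal to \(\Vg\) because \(\k\) and \(\Vg\) are orthogonal (\cref{acl Vg}). For $(4) \Rightarrow (1)$, each sort \(\red(R_s)\) of \(\Lin_A\) is a finite-dimensional \(\k\)-vector space and is therefore \(\k\)-internal after naming a basis in a parameter extension of \(\acl(A)\); consequently \(\dcl(\acl(A), \Lin_A)\) is \(\k\)-internal, and so is \(X\).

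For $(3) \Rightarrow (4)$, I fix \(e \in X\) and apply weak elimination of imaginaries (\cref{AKE} or \cref{ac}) to obtain a tuple \(d = (d_1, d_2, d_3) \in \acleq(Ae)\), with \(d_1\) from \(\K\), \(d_2\) from \(\lineq{\k}\), and \(d_3\) from \(\eq{\Vg}\), such that \(e \in \dcleq(A, d)\). I will show that \(d_1 \in \K(A)\), that \(d_3 \in \eq{\Vg}(A) \subseteq A\), and that each component \(d_{2,i} \in \Tor_{c_i, X_i}\) satisfies \(d_{2,i} \in \dcleq(\acleq(A), \Lin_A(M))\); assembling these yields \(e \in \dcleq(\acleq(A), \Lin_A(M))\). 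The uniform strategy is to produce, for each such element \(u\) (or its projection to \(\Lat\) in the \(\lineq{\k}\)-case), an \(A\)-definable set \(Y \subseteq X \times Z\) with finite fibres over \(X\) and with \(u\) in the fibre over \(e\), then show \(\pi_Z(Y) \subseteq Z\) is orthogonal to \(\Vg\). This orthogonality follows because a finite cover of an orthogonal-to-\(\Vg\) set is orthogonal (using the rectangular decomposition of \(A\)-definable subsets of \(X \times \Vg^n\) to encode the finite fibre data), and \(A\)-definable images of orthogonal sets remain orthogonal.

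With \(\pi_Z(Y)\) orthogonal to \(\Vg\), I conclude finiteness using the appropriate result in each sort: in \(\K\) via \cref{Hen prep}, since any infinite \(A\)-definable subset of \(\K\) contains a ball and a ball is manifestly non-orthogonal to \(\Vg\); in \(\Lat\) via \cref{orth Vg}, whose density hypothesis on \(\Vg\) is available here; and in \(\eq{\Vg}\) via the orthogonality of \(\k\) and \(\Vg\), which forces any \(A\)-definable almost \(\k\)-internal subset of \(\eq{\Vg}\) to be finite. Once the underlying module \(R_i\) of \(d_{2,i}\) lies in \(\Lat(A)\), the interpretable set \(X_i^{(\k, R_i/\m R_i)}\) lives inside the two-sorted vector-space structure on \(\k\) and \(R_i/\m R_i\), both of which are sorts of \(\Lin_A\) (using that \(\k = \red(\O)\) with \(\code{\O} \in \Lat(\emptyset)\)), so \(d_{2,i} \in \dcleq(\acleq(A), \Lin_A(M))\). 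The main technical hurdle is extracting the cover \(Y\) uniformly across \(X\), since the sort of \(d\) and the formula witnessing its algebraicity over \(Ae\) depend a priori on \(e\); this is handled by a standard compactness argument partitioning \(X\) into finitely many \(A\)-definable pieces on each of which the witness structure is fixed.
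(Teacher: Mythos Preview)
Your proof is correct and follows essentially the same route as the paper: apply weak elimination of imaginaries, observe that each component of the weak code lies in an $A$-definable set orthogonal to $\Gamma$ (being almost $X$-internal), and then use \cref{Hen prep}, \cref{orth Vg}, and stable embeddedness of $\Gamma$ to force the $\K$-, $\Lat$-, and $\eq{\Vg}$-components into $\acl(A)$, leaving the $\lineq{\k}$-components in $\dcl(\acl(A),\Lin_A)$. The paper is terser---it simply asserts that the weak code ``lies on an $A$-definable set orthogonal to $\Gamma$''---whereas you spell out the finite-cover argument and the compactness step; but the content is identical.

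Two minor cosmetic points: you write $d_1\in\K(A)$ and $d_3\in\eq{\Vg}(A)\subseteq A$, but since $A$ is not assumed $\acl$-closed, these should read $\K(\acl(A))$ and $\eq{\Vg}(\acl(A))$. Also, in the $\eq{\Vg}$ case you invoke ``almost $\k$-internal'' to conclude finiteness, but what you have actually established for $\pi_Z(Y)$ is orthogonality to $\Gamma$; that alone suffices, since a $\Gamma$-orthogonal definable subset of $\eq{\Vg}$ is orthogonal to itself and hence finite.
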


\begin{proof}
The fourth statement is a particular case of the first statement. The second
statement is a particular case of the first, and it implies the third since
\(\k\) and \(\Vg\) are orthogonal. There remains to prove that if \(X\) is
orthogonal to \(\Vg\) then it is a subset of \(\acl(A)\cup\eq{\Lin_A}\). By
\cref{AKE,ac}, any element \(a\in X\) is weakly coded by some tuple \(\eta\) in
\(\K\cup\lineq{\k}\cup\eq{\Vg}\). Then \(\eta\) also lies on a \(A\)-definable
set orthogonal to \(\Vg\).

There remains to show that \(\eta \in \dcl(\acl(A)\cup\eq{\Lin_A})\). We may
assume that \(\eta\) is a single point. Since \(X\) is orthogonal to \(\Vg\), if
\(\eta \in \K\cup\eq{\Vg}\), then \(\eta\in\acl(A)\). If \(\eta\in\Lat\), then,
by \cref{orth Vg}, \(\eta\in\acl(A)\). Finally, if \(\eta\in \eq{\red(R_s)}\),
for some \(s\in\Lat\), then \(s\in\acl(A)\) and hence \(\eta\in\eq{\Lin_A}\).
\end{proof}

\sloppy
\printbibliography

\end{document}